\newcolumntype{C}[1]{>{\centering\hspace{0pt}}p{#1}}
\newtheorem{thm}{Theorem}[section]
\newtheorem{prop}[thm]{Proposition}
\newtheorem{lem}[thm]{Lemma}
\newtheorem{cor}[thm]{Corollary}
\theoremstyle{definition}
\theoremstyle{definition}
\newtheorem*{rmk}{Remark}
\numberwithin{equation}{section}
\title{The Second Variation for Null-Torsion \\ Holomorphic Curves in the 6-Sphere}
\author{Jesse Madnick}
\date{December 2021}
\newcommand{\Addresses}
{{   \bigskip
  \textsc{National Center for Theoretical Sciences} \par\nopagebreak
\noindent   \textsc{National Taiwan University}\par\nopagebreak
\noindent   \textsc{Taipei, Taiwan}\par\nopagebreak
\noindent  \textit{E-mail address}: \texttt{jmadnick@ncts.ntu.edu.tw}
}}
\begin{document}

\maketitle

\begin{abstract}
In the round 6-sphere, null-torsion holomorphic curves are fundamental examples of minimal surfaces.  This class of minimal surfaces is quite rich: By a theorem of Bryant, extended by Rowland, every closed Riemann surface may be conformally embedded in the round $6$-sphere as a null-torsion holomorphic curve.

In this work, we study the second variation of area for compact null-torsion holomorphic curves $\Sigma$ of genus $g$ and area $4\pi d$, focusing on the spectrum of the Jacobi operator.  We show that if $g \leq 6$, then the multiplicity of the lowest eigenvalue $\lambda_1 = -2$ is equal to $4d$.  Moreover, for any genus, we show that the nullity is at least $2d + 2 - 2g$.  These results are likely to have implications for the deformation theory of asymptotically conical associative $3$-folds in $\mathbb{R}^7$, as studied by Lotay. 

\end{abstract}


\section{Introduction}

\subsection{Background: Minimal Surfaces in Spheres}

\indent \indent Let $\Sigma^2$ denote a closed orientable surface.  In a Riemannian manifold $(M, \langle \cdot, \cdot \rangle)$, an immersed surface $u \colon \Sigma^2 \to M$ is called a \textit{minimal surface} if every variation $u_t \colon \Sigma^2 \to M$ of $u_0 = u$ satisfies $\left.\frac{d}{dt}\right|_{t = 0} \text{Area}(u_t) = 0$.  That is, minimal surfaces are critical points of the area functional, but not necessarily global minimizers of it.  The extent to which a minimal surface fails to be area-minimizing to second order can be measured by the \textit{second variation of area}, which takes the form
\begin{equation} \label{eq:StandardSecondVar}
\left.\frac{d^2}{dt^2}\right|_{t = 0}\text{Area}(u_t) = \int_\Sigma \left\langle \mathcal{L}\eta, \eta \right\rangle\!,
\end{equation}
where $\eta := \left.\frac{d}{dt}\right|_{t = 0} u_t$ is a normal variation vector field, and where $\mathcal{L} \colon \Gamma(N\Sigma) \to \Gamma(N\Sigma)$ is the \textit{Jacobi operator} of the minimal surface $u$.  We will recall the standard expression for $\mathcal{L}$ in $\S$4.

In view of the second variation formula (\ref{eq:StandardSecondVar}), it is of fundamental interest to understand the Jacobi operator of a minimal surface, which in turn motivates the study its spectrum.  Indeed, recalling that $\mathcal{L}$ is strongly elliptic \cite[$\S$I.9]{lawson80lectures}, it may be diagonalized with real eigenvalues
$$\lambda_1 < \lambda_2 < \cdots < \lambda_{s} < 0 = \lambda_{s+1} < \lambda_{s+2} < \cdots \to \infty$$
of finite multiplicities
$$m_1,\, m_2,\, \ldots, m_s, m_{s+1}, \ldots$$
Certain well-known invariants of minimal surfaces may be phrased in terms of this spectrum.  For example, the \textit{Morse index} and \textit{nullity} are, respectively,
\begin{align*}
\text{Ind}(u) & = m_1 + \cdots + m_s & \text{Nullity}(u) & = m_{s+1}.
\end{align*}
A minimal surface is said to be \textit{stable} if $\lambda_1 \geq 0$ and \textit{unstable} if $\lambda_1 < 0$.

\pagebreak

\indent In general, computing the spectrum of $\mathcal{L}$ is extremely difficult.  There seem to be very few examples of minimal surfaces whose Jacobi spectra are known explicitly.  In view of this, geometers instead seek to estimate the eigenvalues $\lambda_j$ and (sums of) multiplicities $m_j$ in terms of more computable geometric and topological quantities.  Still, obtaining such bounds is a non-trivial task.  Even in the classical case of (non-compact) minimal surfaces in $\mathbb{R}^3$, several outstanding open problems remain: see, for example, the excellent survey \cite{chodosh21}. \\

\indent Our focus will be on compact orientable minimal surfaces (without boundary) in round spheres $M = \mathbb{S}^n$ (of constant curvature $1$) with $n \geq 3$.  Pioneering work in this subject was carried out in the late 1960's by, for example, Calabi \cite{calabi1967minimal}, Chern \cite{chern1970minimal}, Simons \cite{simons68}, and Lawson \cite{lawson1970complete}.  In particular, Simons showed \cite[Lemma 5.1.4]{simons68} that all compact minimal surfaces in $\mathbb{S}^n$ have $\lambda = -2$ as an eigenvalue of $\mathcal{L}$, and hence are unstable.  He also established the lower bounds
\begin{align*}
\text{Ind}(u) & \geq n-2 & \text{Nullity}(u) & \geq 3(n-2).
\end{align*}
In both estimates, equality holds if and only if $u$ is the totally-geodesic $\mathbb{S}^2$.

\indent In the case of $n = 3$, Urbano \cite{urbano90} improved Simons' bound, showing that non-totally-geodesic minimal surfaces satisfy $\text{Ind}(u) \geq 5$, with equality if and only if $u$ is the Clifford torus.  This characterization of the Clifford torus was an important ingredient in Marques' and Neves' resolution of the Willmore conjecture \cite{marques2014min}.

\indent In the case $n = 4$, Micallef and Wolfson \cite{micallef-wolfson} proved that minimal surfaces in $\mathbb{S}^4$ of area $A$ satisfy
$$\text{Ind}(u) \geq \frac{1}{2}\left(\frac{A}{\pi} - \chi(\Sigma)\right)\!,$$
where $\chi(\Sigma) = 2 - 2g$ is the Euler characteristic.  Recently, motivated by potential applications to the generalized Willmore conjecture in $\mathbb{S}^n$, Kusner and Wang \cite{kusner-wang} proved that minimal surfaces of genus $g = 1$ in $\mathbb{S}^4$ satisfy $\text{Ind}(u) \geq 6$, with equality if and only if $u$ is a Clifford torus in a totally-geodesic $\mathbb{S}^3$.

In a different direction, if one restricts attention to the class of \textit{superminimal} surfaces in $\mathbb{S}^4$, the beautiful paper of Montiel-Urbano \cite{montiel-urbano} provides remarkably precise information.  They show that superminimal surfaces in $\mathbb{S}^4$ have lowest eigenvalue $\lambda_1 = -2$ and satisfy 
\begin{align} \label{eq:MU}
\text{Ind}(u) & = m_1 = \frac{A}{\pi} - \chi(\Sigma) & \text{Nullity}(u) = m_2 & \geq \frac{A}{\pi} + \chi(\Sigma).
\end{align}
Moreover, if $g = 0$ or $g = 1$, then equality holds in the nullity estimate.  In fact, $\text{Ind}(u) \geq 10$, with equality if and only if $u$ is a (twistor deformation of a) Veronese surface.  The formulas (\ref{eq:MU}) for superminimal surfaces in $\mathbb{S}^4$ were the primary inspiration for this work.

\indent In even dimensions $n = 2k$, Karpukhin \cite{karpukhin2021index} has recently shown that a linearly full minimal surface in $\mathbb{S}^{2k}$ of genus $g = 0$ and area $A = 4\pi d$ has index
$$\text{Ind}(u) \geq 2(k-1)(2d - [\sqrt{8d+1}]_{\text{odd}} + 2),$$
where $[x]_{\text{odd}}$ is the largest odd integer not exceeding $x$.

\subsection{Background: Holomorphic Curves in the $6$-Sphere}

\indent \indent Among all spheres $\mathbb{S}^n$ with $n \geq 3$, the $6$-sphere is the only one that admits an almost-complex structure.  In this work, we will equip $\mathbb{S}^6$ with its standard almost-complex structure $\widetilde{J} \colon T\mathbb{S}^6 \to T\mathbb{S}^6$.  This almost-complex structure is compatible with the round metric, and arises from viewing $\mathbb{S}^6 \subset \mathbb{R}^7 = \text{Im}(\mathbb{O})$ in the imaginary octonions, as we will recall in $\S$\ref{sect:SU3-Structure}.  Having chosen $\widetilde{J}$, the $6$-sphere now admits a distinguished class of surfaces.  That is, a \textit{holomorphic curve} is a surface $u \colon \Sigma^2 \to \mathbb{S}^6$ whose tangent spaces are $\widetilde{J}$-invariant:
$$\widetilde{J}(T_p\Sigma) = T_p\Sigma, \ \ \forall p \in \Sigma.$$
It is easy to show that holomorphic curves in $\mathbb{S}^6$ are (unstable) minimal surfaces.

\indent In a remarkable 1982 paper, Bryant \cite{bryant82} studied holomorphic curves in $\mathbb{S}^6$ by means of a  ``holomorphic Frenet frame," which we discuss in $\S$\ref{sec:MovingFramesHoloCurves}.  Essentially, this amounts to a decomposition of the vector bundle of $(1,0)$-vectors along $u(\Sigma)$ into complex line subbundles
\begin{equation} \label{eq:TNB}
u^*(T^{1,0}\mathbb{S}^6) \simeq L_T \oplus L_N \oplus L_B.
\end{equation}
Crucially, each of the bundles $L_T, L_N, L_B$ carries a natural holomorphic structure, though the isomorphism (\ref{eq:TNB}) generally only holds in the smooth (not holomorphic) category.  By analogy with the classical case of curves in $\mathbb{R}^3$, one can extract two basic invariants: a second-order invariant (``curvature") that is essentially the second fundamental form of the immersion, and a third-order invariant (``torsion") that is rather more subtle.  Bryant encodes the torsion as a holomorphic section
$$\Phi_{\text{I\!I\!I}} \in H^0(L_T^* \otimes L_N^* \otimes L_B),$$
and defines a holomorphic curve to be \textit{null-torsion} if $\Phi_{\text{I\!I\!I}} \equiv 0$ on $\Sigma$.  It is not hard to show that every holomorphic curve of genus $g = 0$ is null-torsion. \\
\indent It turns out that the null-torsion condition is equivalent to the holomorphicity of the \textit{binormal Gauss map} $b_u \colon \Sigma \to \mathbb{CP}^6$, the map sending a point $p \in \Sigma$ to its binormal real $2$-plane in $T_p\mathbb{S}^6 \subset \mathbb{R}^7$ (viewed as a complex line in $\mathbb{C}^7$).  From this fact, together with the Wirtinger Theorem, it follows that the area $A$ of a null-torsion holomorphic curve is quantized.  That is,
$$A = 4\pi d,$$
where $d \in \mathbb{Z}^+$ is the degree of the binormal Gauss map.  Aside from the totally-geodesic $2$-sphere (which has $d = 1$), all null-torsion holomorphic curves have $d \geq 6$.  The moduli space of genus zero holomorphic curves in $\mathbb{S}^6$ of a fixed degree $d \geq 6$ has been studied by Fern\'{a}ndez \cite{fernandez15space}.

\indent In \cite{bryant82}, Bryant derived a Weierstrass representation formula for null-torsion holomorphic curves.  Using this formula, together with an algebro-geometric argument, he proved a striking result: every closed Riemann surface admits a conformal branched immersion into $\mathbb{S}^6$ as a null-torsion holomorphic curve.  This was sharpened by Rowland \cite{rowland} in his 1999 Ph.D. thesis, who improved ``branched immersion" to ``smooth embedding."  The upshot is that, while the generic holomorphic curve is not null-torsion, the class of null-torsion curves is nevertheless extremely rich. \\

\indent Since Bryant's 1982 paper, there have been several interesting studies of holomorphic curves in the $6$-sphere.  For example, Sekigawa \cite{sekigawa83almost} classified the constant-curvature examples, Ejiri \cite{ejiri1986equivariant} classified the $\text{U}(1)$-invariant examples, and Hashimoto \cite{hashimoto2004} obtained beautiful explicit examples of one-parameter deformations.  Bolton, Vrancken, and Woodward \cite{bolton94} studied holomorphic curves by using harmonic sequences, and showed that every holomorphic curve in $\mathbb{S}^6$ can only be full in a totally-geodesic $\mathbb{S}^2$, $\mathbb{S}^5$, or else the entire $\mathbb{S}^6$.  This is by no means a complete list of references; we refer the interested reader to the books of Chen \cite[$\S$19.1-19.2]{chen00riemannian} and Joyce \cite[$\S$12.2]{joyce2007} for more.

\indent Finally, we note that the study of holomorphic curves in $\mathbb{S}^6$ forms part of the larger study of holomorphic curves in nearly-K\"{a}hler $6$-manifolds.  For example, holomorphic curves in $\mathbb{CP}^3$ have been studied by Xu \cite{xu10} and Aslan \cite{aslan2101}, and in $\mathbb{S}^3 \times \mathbb{S}^3$ by Bolton, Dioos, and Vrancken \cite{bolton2015almost}.  In fact, holomorphic curves in nearly-K\"{a}hler $6$-manifolds are precisely the links of associative cones in conical $\text{G}_2$-manifolds, and thereby serve as models for conically singular associative $3$-folds.   This relationship makes holomorphic curves objects of fundamental interest in $\text{G}_2$-geometry.

\subsection{Main Results}

\indent \indent In this work, we consider the Jacobi spectra of null-torsion holomorphic curves in $\mathbb{S}^6$.  Perhaps the most basic question is: What is the multiplicity $m_1$ and value $\lambda_1$ of the lowest eigenvalue of $\mathcal{L}$?  In the early 1980's, Ejiri \cite{ejiri83} considered this question in the context of superminimal surfaces in $\mathbb{S}^{2n}$, showing that $\lambda_1 = -2$.  (Although his results are stated for minimal $2$-spheres in $\mathbb{S}^{2n}$, most of Ejiri's arguments apply without change to the larger class of superminimal surfaces.)  Furthermore, equipping the normal bundle with a certain holomorphic structure, which we call $\overline{\partial}^\nabla$, he showed that the $\lambda_1$-eigenspace of $\mathcal{L}$ may be identified with the space of holomorphic normal vector fields:
$$\{\eta \in \Gamma(N\Sigma) \colon \mathcal{L}\eta = -2\eta\} \cong \{\text{solutions of }\overline{\partial}^\nabla \xi = 0\}.$$
The Riemann-Roch Theorem then implies that
$$m_1 \geq \frac{A}{\pi} + (n-3)\chi(\Sigma).$$
Ejiri also observed that equality holds in the case of genus $g = 0$, essentially by an application of Grothendieck's classification of holomorphic vector bundles on $\mathbb{S}^2 = \mathbb{CP}^1$.

\indent Now, since null-torsion holomorphic curves in $\mathbb{S}^6$ are, in particular, superminimal surfaces, Ejiri's results imply that they satisfy $\lambda_1 = -2$ and
\begin{equation} \label{eq:EjiriLowerBound}
m_1 \geq \frac{A}{\pi}.
\end{equation}
Our first result is that, in fact, equality holds for genus $g \leq 6$:

\begin{thm} \label{thm:M1}
Let $u \colon \Sigma \to \mathbb{S}^6$ be a null-torsion holomorphic curve of genus $g$ and area $A = 4\pi d$.
If $g \leq 6$ (or, more generally, if $g < \frac{1}{2}(d + 2)$), then the first multiplicity $m_1$ of the Jacobi operator is:
\begin{equation*}
m_1 = \frac{A}{\pi} = 4d.
\end{equation*}
\end{thm}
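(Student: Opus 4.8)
The plan is to realize $m_1$ as twice a dimension of holomorphic sections and then to force equality in Ejiri's Riemann--Roch bound by a vanishing theorem that exploits null-torsion. By Ejiri's identification, the $\lambda_1$-eigenspace is isomorphic to $\ker \overline{\partial}^\nabla = H^0(\Sigma, \mathcal{N})$, where $\mathcal{N}$ is the rank-two holomorphic normal bundle carrying the structure $\overline{\partial}^\nabla$; since a real normal field is the same as a complex section of $(\mathcal{N}, \widetilde{J})$, this gives $m_1 = 2\,h^0(\mathcal{N})$. The index computation underlying \eqref{eq:EjiriLowerBound} reads $h^0(\mathcal{N}) - h^1(\mathcal{N}) = 2d$ (equivalently $\deg \mathcal{N} = 2d + 2g - 2$), so that $m_1 = 4d + 2\,h^1(\mathcal{N})$. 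Thus the theorem is equivalent to the vanishing $h^1(\mathcal{N}) = 0$, which by Serre duality is the assertion $h^0(\Sigma, K \otimes \mathcal{N}^*) = 0$.

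The difficulty is that negativity of $\deg(K \otimes \mathcal{N}^*) = 2(g-1-d)$ does not by itself kill the sections of the rank-two bundle $K \otimes \mathcal{N}^*$, so I would bring in the null-torsion hypothesis here. The vanishing $\Phi_{\text{I\!I\!I}} \equiv 0$ should make $\overline{\partial}^\nabla$ strictly triangular with respect to Bryant's normal splitting $L_N \oplus L_B$: the torsion is precisely the obstruction to the binormal line being $\overline{\partial}^\nabla$-holomorphic, so its vanishing exhibits a holomorphic sub-line-bundle and hence a short exact sequence of holomorphic bundles $0 \to \mathcal{A} \to \mathcal{N} \to \mathcal{B} \to 0$. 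A degree count from Bryant's structure equations---the binormal Gauss map contributing $d$ and the residual tangential/normal curvature contributing $2g-2$---should give $\{\deg \mathcal{A},\, \deg \mathcal{B}\} = \{d,\; d + 2g - 2\}$, consistent with $\deg \mathcal{A} + \deg \mathcal{B} = \deg \mathcal{N} = 2d + 2g - 2$.

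With the sequence in hand the conclusion is a routine degree chase. Dualizing and twisting by $K$ yields $0 \to K \otimes \mathcal{B}^* \to K \otimes \mathcal{N}^* \to K \otimes \mathcal{A}^* \to 0$, whence $h^0(K \otimes \mathcal{N}^*) \le h^0(K \otimes \mathcal{B}^*) + h^0(K \otimes \mathcal{A}^*)$. The two line bundles on the right have degrees $(2g-2) - d$ and $(2g-2) - (d + 2g - 2) = -d$. Under the hypothesis $g < \tfrac{1}{2}(d+2)$---which is exactly the inequality $2g - 2 < d$---both degrees are strictly negative, so each line bundle has no nonzero holomorphic section (and this does not require knowing which of $\mathcal{A}, \mathcal{B}$ is the sub). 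Therefore $h^0(K \otimes \mathcal{N}^*) = 0$, giving $h^1(\mathcal{N}) = 0$ and $m_1 = 4d$, as claimed. As a sanity check, the totally-geodesic $\mathbb{S}^2$ has $g = 0$, $d = 1$, and $\mathcal{N} \cong \mathcal{O}(-1) \oplus \mathcal{O}(1)$, so indeed $m_1 = 2\,h^0(\mathcal{N}) = 4$.

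I expect the genuine obstacle to be the middle step: understanding how Ejiri's holomorphic structure $\overline{\partial}^\nabla$ interacts with Bryant's holomorphic Frenet frame, proving that null-torsion produces the triangular form (and thus the holomorphic sub-bundle), and computing the two degrees unambiguously---in particular reconciling $\deg \mathcal{N} = 2d + 2g - 2$ with the smoothly-split normal bundle, whose degree is shifted by the non-parallel almost-complex structure of the nearly-K\"{a}hler $\mathbb{S}^6$. Once the exact sequence and its degrees are established, the vanishing theorem and the equality $m_1 = 4d$ follow formally from Riemann--Roch and Serre duality.
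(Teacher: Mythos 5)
Your plan is correct and is essentially the argument the paper gives: reduce $m_1$ to $2\,h^0$ of the normal bundle with the $\nabla^\perp$-induced holomorphic structure, get $m_1 = 4d + 2h^1$ from Riemann--Roch, and kill the correction term using the extension structure forced by null-torsion together with $d > 2g-2$. Your closing step (Serre duality applied to the dualized sequence) and the paper's (bounding $h^0$ by $h^0(L_N) + h^0(L_B^*)$ via the decoupled Cauchy--Riemann system, then Riemann--Roch on each line bundle) are just the two ends of the long exact sequence of the same extension, and they invoke literally the same vanishing, $h^0(L_B \otimes K_\Sigma) = 0$. The one substantive correction concerns the middle step you flag: the bundle carrying $\overline{\partial}^\nabla$ is \emph{not} $(N\Sigma, \widetilde{J}) \simeq L_N \oplus L_B$ --- that bundle has degree $2g-2$, which is exactly the discrepancy you noticed, and moreover $\nabla^\perp$ does not preserve its $(1,0)$-part since $\nabla^\perp\widetilde{J} \neq 0$. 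One must first replace $\widetilde{J}$ by the modified structure $\widehat{J}$ that is conjugated on the second normal bundle, giving $\widehat{N}^{1,0} \simeq L_N \oplus L_B^*$ of degree $2d + 2g - 2$. Null-torsion enters precisely here: it is equivalent to $\nabla^\perp\widehat{J} = 0$, which is what makes $(\nabla^\perp)^{0,1}$ a $\overline{\partial}$-operator on $\widehat{N}^{1,0}$ at all. The holomorphic sub-line-bundle is then the \emph{first} normal line $L_N$ (degree $d + 2g-2$), with quotient of degree $d$; the off-diagonal term $\tfrac{1}{2}\overline{\zeta}_1 \otimes f_2$ comes from the nearly-K\"ahler torsion of $\mathbb{S}^6$, not from $\Phi_{\text{I\!I\!I}}$, so ``torsion obstructs holomorphicity of the binormal line'' is not quite the right mechanism. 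With that substitution your degree chase goes through verbatim, since (as you note) the vanishing only needs the two degrees, not which factor is the sub-bundle.
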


\indent Where minimal surfaces of high genus ($g \geq 1$) and high codimension (at least $2$) in round spheres are concerned, the only explicit formulas for $m_1$ that the author knows are Montiel and Urbano's result (\ref{eq:MU}) and Theorem \ref{thm:M1} above.  Our argument makes crucial use of the particular geometry of null-torsion holomorphic curves.  In outline, the idea is the following. We will equip the normal bundle with a \textit{second} holomorphic structure, called $\overline{\partial}^D$, that arises naturally from the nearly-K\"{a}hler structure on $\mathbb{S}^6$.  Letting $S$ denote the difference tensor $S\xi := \overline{\partial}^\nabla\xi - \overline{\partial}^D \xi$, the Cauchy-Riemann system $\overline{\partial}^\nabla\xi = 0$ is equivalent to
\begin{equation} \label{eq:EquivCR}
\overline{\partial}^D \xi = -S\xi.
\end{equation}
It turns out that the system (\ref{eq:EquivCR}) decouples, yielding an easy upper bound for the dimension of the solution space, which proves the theorem.  Our second result is a lower bound on the nullity, valid for all genera:

\begin{thm} \label{thm:Nullity}
Let $u \colon \Sigma \to \mathbb{S}^6$ be a null-torsion holomorphic curve of genus $g$ and area $A = 4\pi d$.  Then the nullity of its Jacobi operator satisfies
\begin{equation} \label{eq:NullityBound}
\mathrm{Nullity}(u) \geq 2d + \chi(\Sigma).
\end{equation}
\end{thm}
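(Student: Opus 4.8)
The plan is to bound the nullity from below by exhibiting an explicit family of genuine Jacobi fields (elements of $\ker \mathcal{L}$, the $\lambda = 0$ eigenspace) and counting them by Riemann--Roch. The target value is suggestive: since $2d + \chi(\Sigma) = 2(d + 1 - g)$ is exactly twice the Riemann--Roch lower bound $h^0(L) \geq \deg L + 1 - g$ for a holomorphic line bundle $L$ of degree $d$, I expect the relevant Jacobi fields to be parametrised, after realification, by the holomorphic sections of a naturally occurring degree-$d$ line bundle. The natural candidate is the binormal line bundle of the holomorphic Frenet frame \eqref{eq:TNB}, whose degree is governed by the degree $d$ of the binormal Gauss map $b_u \colon \Sigma \to \mathbb{CP}^6$. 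These fields should be thought of as the infinitesimal deformations of $u$ that preserve the null-torsion condition, which is why they are Jacobi fields at all.

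Concretely, I would proceed as follows. Using the explicit form of $\mathcal{L}$ recorded in $\S 4$, together with the second holomorphic structure $\overline{\partial}^D$ and the difference tensor $S\xi = \overline{\partial}^\nabla \xi - \overline{\partial}^D \xi$, I would rewrite the equation $\mathcal{L}\eta = 0$ relative to the splitting $u^*(T^{1,0}\mathbb{S}^6) \simeq L_T \oplus L_N \oplus L_B$ of \eqref{eq:TNB}. The crucial point is that the null-torsion hypothesis $\Phi_{\text{I\!I\!I}} \equiv 0$ annihilates precisely the tensorial coupling between the normal summand $L_N$ and the binormal summand $L_B$, so that $\mathcal{L}\eta = 0$ decouples along this splitting in the same spirit as the system $\overline{\partial}^D \xi = -S\xi$ decouples in the proof of Theorem~\ref{thm:M1}. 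On the binormal summand the decoupled equation should reduce to a first-order Cauchy--Riemann equation: the curvature (zeroth-order) contribution of the binormal bundle, being extremal in the Frenet flag, ought to shift the bottom of the relevant second-order operator to exactly $0$. Its (conjugate) solutions are then precisely the holomorphic sections of the degree-$d$ binormal bundle, and each such section yields, through its real part, a normal field lying in $\ker \mathcal{L}$; one checks this assignment is injective, so the real dimension of the resulting space is $2\,h^0$ of that bundle.

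Finally, Riemann--Roch gives $h^0 \geq d + 1 - g$ using only $h^1 \geq 0$, so no vanishing theorem is required and the estimate holds in every genus. Realification then yields
\[
\mathrm{Nullity}(u) \;\geq\; 2\,(d + 1 - g) \;=\; 2d + \chi(\Sigma),
\]
as claimed. In contrast to Theorem~\ref{thm:M1}, here we seek only a lower bound, so we never need to control $h^1$ or to bound $h^0$ from above; this is exactly why the conclusion is valid for all $g$ rather than only for $g \leq 6$.

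The main obstacle is the decoupling-and-reduction step: verifying that the fields built from binormal holomorphic sections satisfy the \emph{full} equation $\mathcal{L}\eta = 0$, and not merely the kernel of some sub-operator. This demands an honest computation of $\mathcal{L}$ in the Frenet frame, a careful accounting showing that every cross-term between $L_N$ and $L_B$ is a multiple of $\Phi_{\text{I\!I\!I}}$ and hence vanishes, and a confirmation that the binormal curvature supplies precisely the shift needed to turn the second-order binormal equation into a first-order holomorphicity condition. A secondary bookkeeping point is to orient the Frenet flag correctly, so that the pertinent binormal bundle has degree $+d$ rather than $-d$ and the Riemann--Roch estimate indeed produces a lower bound that grows with $d$; this is a matter of tracking the structure equations from $\S 2$--$\S 3$.
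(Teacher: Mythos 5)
Your choice of line bundle is right up to isomorphism --- since $L_T \otimes L_N \otimes L_B \cong \underline{\mathbb{C}}$ and $L_T = K_\Sigma^*$, the degree-$d$ bundle $L_B^*$ you name is holomorphically the same as the bundle $K_\Sigma^* \otimes L_N$ that the paper actually uses, and your endgame (Riemann--Roch with only $h^1 \geq 0$, hence validity in all genera) matches the paper exactly. But the mechanism you propose for producing the Jacobi fields has a genuine gap, in two places. First, $\mathcal{L}$ does \emph{not} decouple along $N\Sigma = E_N \oplus E_B$: in a $T^2$-frame the off-diagonal terms of $\nabla^\perp$ are $-\frac{1}{2}\zeta_1 \otimes \overline{f}_3$ and $\frac{1}{2}\overline{\zeta}_1 \otimes f_2$ (see (\ref{eq:NHatHolo1})--(\ref{eq:NHatHolo2})), and these come from the nearly-K\"{a}hler intrinsic torsion $T(\omega)$, not from $\Phi_{\text{I\!I\!I}}$; they survive the null-torsion hypothesis $\tau = 0$. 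Only the connection $D^\perp$ splits holomorphically (Proposition \ref{thm:HatHoloSplitting}), whereas $\mathcal{L}$ is built from $\nabla^\perp$, so the claim that ``every cross-term is a multiple of $\Phi_{\text{I\!I\!I}}$'' fails. Second, and more fundamentally, realifications of holomorphic sections of the normal bundle land in the wrong eigenspace: by Corollary \ref{thm:SecondVar} and Proposition \ref{thm:LowestEigen}, $\widehat{\mathscr{D}}\eta = 0$ forces $\mathcal{L}\eta = -2\eta$, and the condition $\overline{\partial}^{L_B^*}\xi_3 = 0$ is exactly what parametrizes (part of) the $m_1$-eigenspace in $\S$4.3. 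So the fields your construction would produce are $(-2)$-eigenvectors, not null vectors.

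The paper's route to the nullity is necessarily second order in a way your sketch misses. One identifies $\{\eta \in \mathrm{Null}(u) \colon (\Psi_\eta)^B = 0\}$ with $H^0(K_\Sigma^* \otimes L_N)$ via $\eta \mapsto \Psi_\eta = \widehat{\mathscr{D}}\eta$ --- so the holomorphic datum attached to a Jacobi field is the $T^*\Sigma$-valued section $\widehat{\mathscr{D}}\eta$, not $\eta$ itself --- and the inverse construction takes $\alpha \in H^0(K_\Sigma^* \otimes L_N)$ to $\xi = \frac{1}{2}J(\nabla^\perp)^*\alpha$ and then to $\eta = \frac{1}{2}(\mathcal{L}\xi + 2\xi)$, using the Weitzenb\"{o}ck-type identities of Lemmas \ref{thm:Codiff1} and \ref{thm:Codiff2}. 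The twist by $K_\Sigma^*$ is precisely what shifts the relevant eigenvalue from $-2$ to $0$; there is no pointwise ``curvature shift'' on the binormal summand that accomplishes this. To repair your argument you would need to supply this correspondence (or an equivalent one), which is the real content of Proposition \ref{thm:GSurj} and occupies all of $\S$5.3--$\S$5.4.
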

\indent Here, our argument is not original.  Indeed, we closely follow the calculations in Montiel and Urbano's study \cite{montiel-urbano} of superminimal surfaces in self-dual Einstein $4$-manifolds.  The idea of the proof is to identify a certain subspace of
$$\text{Null}(u) := \{\eta \in \Gamma(N\Sigma) \colon \mathcal{L}\eta = 0\}$$
with the space of holomorphic sections of a certain line bundle whose dimension can be estimated (and for genus $g \leq 6$, computed explicitly) by Riemann-Roch.  Note that, since we only consider a subspace of $\text{Null}(u)$, our bound (\ref{eq:NullityBound}) is almost certainly not sharp.  On the other hand, it appears that most of the argument extends without change to the general case of superminimal surfaces in any even-dimensional sphere $\mathbb{S}^{2n}$, providing an avenue for further inquiry.

\subsection{Open Questions}

\begin{enumerate}
\item Let $u \colon \Sigma^2 \to \mathbb{S}^{2n}$ be a compact orientable superminimal surface of genus $g$.  Is it always the case that Ejiri's lower bound is satisfied:
$$m_1 = \frac{A}{\pi} + (n-3)\chi(\Sigma)?$$
Ejiri \cite{ejiri83} has proven this for genus $g = 0$, while Montiel-Urbano \cite{montiel-urbano} has proven this for $n = 2$.  Our Theorem \ref{thm:M1} establishes this in the special case where $n = 3$, $g \leq 6$, and the superminimal surface is holomorphic.
\item Holomorphic curves may be studied in any nearly-K\"{a}hler $6$-manifold.  What can be said about the Jacobi spectrum in that generality?
\item In the $6$-sphere: Can one establish a lower bound on the second eigenvalue $\lambda_2$?  As a first step, it would be instructive to understand the spectrum of the Boruvka sphere, the unique holomorphic curve of constant curvature $K = \frac{1}{6}$.  We show in Proposition \ref{thm:Lambda2} that the Boruvka sphere satisfies $\lambda_2 \geq -\frac{5}{3}$.  Further, Karpukhin \cite[Theorem 1.7]{karpukhin2021index} has estimated its Morse index as $m_1 + \cdots + m_s = \text{Ind}(u) \geq 36$, and Ejiri's result \cite{ejiri83} gives $m_1 = 24$, implying $m_2 + \cdots + m_s \geq 12$, so $\lambda_2 < 0$. 
\end{enumerate}

\subsection{Organization}

\indent \indent In $\S$2, we recall basic facts and formulas regarding minimal surfaces in $\mathbb{S}^6$, holomorphic curves in $\mathbb{S}^6$, and holomorphic vector bundles over Riemann surfaces.  This section is largely to establish conventions and experts may wish to skip it. 

In $\S$3.1 and $\S$3.2, we set up the moving frame for holomorphic curves in the $6$-sphere.  Our discussion is essentially a summary of \cite[$\S$4]{bryant82}, though our notation is quite different.  In $\S$3.3, we take a closer look at \textit{null-torsion} holomorphic curves, culminating in Proposition \ref{thm:HoloSections}, which counts the holomorphic sections of $L_N$ and $L_B^*$, and Proposition \ref{thm:GenusDegree}, which justifies our tacit parenthetical claim in Theorem \ref{thm:M1} that $g \leq 6$ implies $g < \frac{1}{2}(d+2)$.

Section 3.4 is at the heart of Theorem \ref{thm:M1}.  The purpose of $\S$3.4 is to explain how the normal bundle of a null-torsion holomorphic curve may naturally be equipped with \textit{three} different holomorphic structures, which we call $\overline{\partial}^{\text{SU}}$, $\overline{\partial}^\nabla$, and $\overline{\partial}^D$.  The operator $\overline{\partial}^{\text{SU}}$ relates to the deformation theory of asymptotically conical associative $3$-folds in $\mathbb{R}^7$, as shown by Lotay \cite{lotay11asymp}, while $\overline{\partial}^\nabla$ relates to the $(-2)$- and $0$-eigenspaces of the Jacobi operator $\mathcal{L}$.  However, it is with respect to $\overline{\partial}^D$ that the normal bundle splits \textit{holomorphically}, which aids in the decoupling of (\ref{eq:EquivCR}).

\indent In $\S$4, we begin our study of the Jacobi operator of null-torsion holomorphic curves.  Sections 4.1 and 4.2 establish the lower bound (\ref{eq:EjiriLowerBound}), while $\S$4.3 proves Theorem  \ref{thm:M1} by analyzing (\ref{eq:EquivCR}).  Finally, in $\S$5.2, we reduce Theorem 1.2 to a claim (Proposition \ref{thm:GSurj}) about the image of a certain linear Cauchy-Riemann type operator, and in $\S$5.3-$\S$5.4 we establish Proposition \ref{thm:GSurj}. \\

\noindent \textbf{Acknowledgements:} This work benefited from clarifying conversations with Benjamin Aslan, Gavin Ball, Robert Bryant, Bang-Yen Chen, Mikhail Karpukhin, Hsueh-Yung Lin, Jason Lotay, and David Wen.  I thank Gorapada Bera for his careful reading of an earlier version of this preprint, and thank Da Rong Cheng, Shubham Dwivedi, Spiro Karigiannis, and Chung-Jun Tsai for their interest and encouragement. \\
\indent This work was completed during the author's postdoctoral fellowship at the National Center for Theoretical Sciences (NCTS) at National Taiwan University.  I thank the Center for their support.


\section{Preliminaries}

\indent \indent In this brief section, we recall basic facts about minimal surfaces in $\mathbb{S}^6$, holomorphic curves in $\mathbb{S}^6$, and holomorphic vector bundles.  This section primarily serves to fix notation and conventions.

\subsection{Minimal Surfaces in $\mathbb{S}^6$}

\indent \indent Let $u \colon \Sigma^2 \to \mathbb{S}^6$ be an immersed surface in the round $6$-sphere of constant curvature $1$.  Let $\langle \cdot, \cdot \rangle$ denote the round metric on $\mathbb{S}^6$ and let $\overline{\nabla} \colon \Gamma(T\mathbb{S}^6) \to  \Omega^1(\mathbb{S}^6) \otimes \Gamma(T\mathbb{S}^6)$ denote the Levi-Civita connection.  As usual, we split
$$u^*(T\mathbb{S}^6) = T\Sigma \oplus N\Sigma$$
into tangential and normal parts.  For $X,Y \in \Gamma(T\Sigma)$ and $N \in \Gamma(N\Sigma$), we have
\begin{align*}
\overline{\nabla}_XY & = \nabla^\top_XY + \text{I\!I}(X,Y) \\
\overline{\nabla}_XN & = W_XN + \nabla^\perp_XN
\end{align*}
where $\nabla^\top$ is the Levi-Civita connection on $\Sigma$, where $\nabla^\perp$ is the normal connection, where $\text{I\!I}$ is the second fundamental form, and where $W$ is the shape operator.  Recall the Weingarten equation
$$\langle W_XN, Y \rangle = -\langle \text{I\!I}(X,Y), N \rangle.$$
The curvature tensors of $\overline{\nabla}, \nabla^\top, \nabla^\perp$ will be denoted $\overline{R}$, $R^\top$, $R^\perp$, respectively.  We will often use the notation $\overline{R}_{XY}Z := \overline{R}(X,Y, Z, \cdot)$, and similarly for $R^\top$ and $R^\perp$. \\

\indent Suppose now that $u \colon \Sigma^2 \to \mathbb{S}^6$ is a minimal surface.  Let $(e_1, \ldots, e_6)$ be a local orthonormal frame with $e_1, e_2 \in T\Sigma$ and $e_3, e_4, e_5, e_6 \in N\Sigma$.  We recall the \textit{Gauss equation}
\begin{equation} \label{eq:GaussEq}
1 = K + \Vert \text{I\!I}(e_1, e_1) \Vert^2 + \Vert \text{I\!I}(e_1, e_2) \Vert^2
\end{equation}
where $K$ is the Gauss curvature of $\Sigma$.  We also recall the \textit{Ricci equation}
\begin{align*}
\langle R^\perp_{12}e_\alpha, e_\beta \rangle & = \left\langle W_1(e_\beta), W_2(e_\alpha) \right\rangle - \left\langle W_1(e_\alpha), W_2(e_\beta)  \right\rangle
\end{align*}
where $3 \leq \alpha, \beta \leq 6$, and we are using the shorthand $R^\perp_{12} := R^\perp_{e_1, e_2}$ and $W_j := W_{e_j}$.  Expressing the second fundamental form as $\text{I\!I}(e_i, e_j) = h^\alpha_{ij}e_\alpha$, we have
\begin{align*}
W_1(e_\alpha) & = -h^\alpha_{11}e_1 - h^\alpha_{12}e_2 \\
W_2(e_\alpha) & = -h^\alpha_{12}e_1 + h^\alpha_{11}e_2,
\end{align*}
so that the Ricci equation reads
\begin{equation} \label{eq:RicciEq}
\langle R^\perp_{12}e_\alpha, e_\beta \rangle = 2 \left(h^\beta_{11}h^\alpha_{12} - h^\alpha_{11}h^\beta_{12}\right)\!.
\end{equation}

\subsubsection{First and Second Normal Bundles}

\indent \indent Since $u$ is a minimal surface, its second fundamental form $\text{I\!I}_p \colon \text{Sym}^2(T_p\Sigma) \to N_p\Sigma$ at $p \in \Sigma$ is determined by $\text{I\!I}_p(e_1, e_1)$ and $\text{I\!I}_p(e_1, e_2)$.  Therefore, the image of $\text{I\!I}_p$, called the \textit{first normal space}
\begin{align*}
\left.E_N\right|_p & := \left\{\text{I\!I}_p(X,Y) \in N_p\Sigma \colon X,Y \in T_p\Sigma\right\} = \text{span}(\text{I\!I}_p(e_1, e_1), \text{I\!I}_p(e_1, e_2)),
\end{align*}
is a vector space of dimension at most $2$.  Letting
\begin{align*}
\Sigma^\circ := \left\{p \in \Sigma \colon \dim(E_N|_p) = 2 \right\}
\end{align*}
we note that $\Sigma^\circ \subset \Sigma$ is an open set, and that $E_N := \bigcup_{p \in \Sigma^\circ} E_N|_p \to \Sigma^\circ$ is a rank $2$ vector bundle, called the \textit{first normal bundle}. \\
\indent For $p \in \Sigma^\circ$, let $E_B|_p$ denote the \textit{second normal space}, i.e., the orthogonal complement of $E_N|_p \subset N_p\Sigma$, so that there is an orthogonal splitting
$$N_p\Sigma = E_N|_p \oplus E_B|_p.$$
The rank $2$ vector bundle $E_B := \bigcup_{p \in \Sigma^\circ} E_B|_p \to \Sigma^\circ$ is called the \textit{second normal bundle}.  For a normal vector $\eta \in N\Sigma$, we write
\begin{equation} \label{eq:NBSplitting}
\eta = \eta^N + \eta^B
\end{equation}
for its decomposition into first normal and second normal components.  The \textit{third fundamental form} $\text{I\!I\!I} \colon \text{Sym}^3(T\Sigma) \to E_B$ is defined by $\text{I\!I\!I}(X,Y,Z) := [\nabla^\perp_X(\text{I\!I}(Y,Z))]^B$. 
It is a standard fact that $\text{I\!I\!I}$ is, in fact, symmetric in its arguments.

\subsection{Holomorphic Curves in $\mathbb{S}^6$} \label{sect:SU3-Structure}

\indent \indent Thus far, we have been regarding the round $\mathbb{S}^6$ simply as a Riemannian manifold.  We now equip it with extra data, namely its standard (nearly-K\"{a}hler) $\text{SU}(3)$-structure. To begin, let us consider the imaginary octonions $\text{Im}(\mathbb{O}) = \mathbb{R}^7$, equipped with the standard euclidean inner product $g_0$.  The imaginary octonions admit a well-known cross product $\times \colon \text{Im}(\mathbb{O}) \times \text{Im}(\mathbb{O}) \to \text{Im}(\mathbb{O})$ via 
$$x \times y := \textstyle\frac{1}{2}(xy - yx).$$
Using the metric $g_0$, the cross product can be recast as a $3$-form $\phi \in \Lambda^3(\mathbb{R}^7)^*$
$$\phi(x,y,z) := g_0(x \times y, z)$$
called the \textit{associative $3$-form}.  The associative $3$-form is a ($\text{G}_2$-invariant) calibration on $\mathbb{R}^7$, and its calibrated $3$-folds are called ``associative $3$-folds."  That is, an \textit{associative $3$-fold} is an immersed submanifold $N^3 \to \mathbb{R}^7$ that satisfies
\begin{equation*}
\left.\phi\right|_N = \text{vol}_N
\end{equation*}
where $\text{vol}_N$ is the volume form on $N^3$.  The study of associative $3$-folds is of fundamental importance to $\text{G}_2$-geometry \cite[$\S$12]{joyce2007}.  \\

\indent Returning to the round $6$-sphere, let us embed $\mathbb{S}^6 \subset \mathbb{R}^7 = \text{Im}(\mathbb{O})$ in the standard way.  For each $p \in \mathbb{S}^6$, we can use the cross product $\times$ to define a map
\begin{align*}
\widetilde{J}_p \colon T_p\mathbb{S}^6 & \to T_p\mathbb{S}^6 \\
\widetilde{J}_p(x) & = p \times x.
\end{align*}
The properties of $\times$ imply that each $(\widetilde{J}_p)^2 = -\text{Id}$.  The resulting bundle map $\widetilde{J} \colon T\mathbb{S}^6 \to T\mathbb{S}^6$ is the standard ($\text{G}_2$-invariant) almost-complex structure on the $6$-sphere.  One can check that each $\widetilde{J}_p \colon T_p\mathbb{S}^6 \to T_p\mathbb{S}^6$ is an isometry, and that the bilinear form on $\mathbb{S}^6$ given by
$$\widetilde{\Omega}(x,y) := \langle \widetilde{J}x, y \rangle$$
is skew-symmetric and non-degenerate.  In other words, the triple $(\langle \cdot, \cdot \rangle, \widetilde{J}, \widetilde{\Omega})$ is an \textit{almost-Hermitian} (or \textit{$\mathrm{U}(3)$-structure}) on $\mathbb{S}^6$.  We emphasize that $\widetilde{J}$ is not integrable, and that $\widetilde{\Omega}$ is not closed. \\
\indent Now, letting $\partial_r$ denote the radial vector field on $\mathbb{R}^7$, one can show that the complex $3$-form $\Upsilon \in \Omega^3(\mathbb{S}^6; \mathbb{C})$ given by
\begin{align*}
\Upsilon & := \left.\left(\partial_r \lrcorner(\ast \phi) + i\phi \right)\right|_{\mathbb{S}^6}
\end{align*}
is a $(3,0)$-form on $\mathbb{S}^6$ that satisfies
$$\frac{i}{8}\Upsilon \wedge \overline{\Upsilon} = \text{vol}_{\mathbb{S}^6}.$$
That is, the quadruple $(\langle \cdot, \cdot \rangle, \widetilde{J}, \widetilde{\Omega}, \Upsilon)$ is an \textit{$\mathrm{SU}(3)$-structure} on $\mathbb{S}^6$.  In fact, this $\text{SU}(3)$-structure satisfies the \textit{nearly-K\"{a}hler} equations $d\widetilde{\Omega} = 3\,\text{Im}(\Upsilon)$ and $d\,\text{Re}(\Upsilon) = 2\,\widetilde{\Omega} \wedge \widetilde{\Omega}$. 
The round $6$-sphere with this $\text{SU}(3)$-structure is the simplest example of a strict nearly-K\"{a}hler $6$-manifold. \\

\indent Now, the $\text{SU}(3)$-structure gives rise to distinguished classes of submanifolds of the $6$-sphere.  In particular, an immersed surface $u \colon \Sigma^2 \to \mathbb{S}^6$ is a \textit{holomorphic curve} if
$$\widetilde{J}(T_p\Sigma) = T_p\Sigma, \ \ \ \forall p \in \Sigma.$$
Holomorphic curves are, in fact, minimal surfaces.  One way to see this is to observe that holomorphic curves have extra symmetries in their second fundamental forms (see (\ref{eq:SecondFundSym}) in $\S$\ref{sect:FirstAdapt}), and these symmetries imply minimality.  Another way uses the following fundamental fact:

\begin{prop} Let $\Sigma^2 \subset \mathbb{S}^6$ be an immersed surface, and let $C(\Sigma) = \{rx \in \mathbb{R}^7 \colon r > 0, x \in \Sigma\}$ be its cone in $\mathbb{R}^7$.  Then $\Sigma$ is a holomorphic curve if and only if $C(\Sigma)$ is an associative $3$-fold.
\end{prop}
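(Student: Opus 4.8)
The plan is to reduce the global calibration condition on the cone to a pointwise statement about its tangent $3$-planes, and then to verify that statement with a single well-chosen orthonormal frame. First I would identify the tangent spaces of the cone. Parametrizing $C(\Sigma)$ by $F(r,p) = r\,x(p)$ with $x(p) \in \Sigma \subset \mathbb{S}^6$, one computes $\partial_r F = x$, while the remaining partials span $T_x\Sigma$ after dividing out the harmless factor $r$. Since $x \perp T_x\Sigma$ in $\mathbb{R}^7$ (because $|x| = 1$), this yields the orthogonal splitting
\[
T_{rx}C(\Sigma) = \mathbb{R}\,x \oplus T_x\Sigma,
\]
which is independent of $r$. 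Because $\phi$ is a constant-coefficient form and the condition $\phi|_N = \text{vol}_N$ is pointwise, $C(\Sigma)$ is associative if and only if for every $x \in \Sigma$ the $3$-plane $V_x := \mathbb{R}x \oplus T_x\Sigma$ is an associative subspace of $\mathbb{R}^7$.

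For the direction ``$\Sigma$ holomorphic $\Rightarrow$ $C(\Sigma)$ associative,'' I would fix $x \in \Sigma$ and a unit vector $v \in T_x\Sigma$. Holomorphicity gives $\widetilde{J}_x v = x \times v \in T_x\Sigma$, and from $|x \times v|^2 = |x|^2|v|^2 - g_0(x,v)^2 = 1$ together with $x \times v \perp x, v$, the pair $(v, x \times v)$ is an orthonormal basis of $T_x\Sigma$. Hence $(x, v, x \times v)$ is an orthonormal basis of $V_x$, and by the very definition of $\phi$,
\[
\phi(x, v, x \times v) = g_0(x \times v,\, x \times v) = 1.
\]
Thus $\phi$ restricts to $V_x$ as its volume form for the orientation $(x, v, x \times v)$, so $V_x$ is associative; as this holds at every point, $C(\Sigma)$ is calibrated by $\phi$.

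For the converse, I would first record that any associative $3$-plane is closed under the cross product: if $(f_1, f_2, f_3)$ is an oriented orthonormal basis of an associative $V$ with $\phi(f_1, f_2, f_3) = 1$, then $f_1 \times f_2$ is a unit vector orthogonal to $f_1, f_2$ satisfying $g_0(f_1 \times f_2, f_3) = \phi(f_1,f_2,f_3) = 1$, so the equality case of Cauchy--Schwarz forces $f_1 \times f_2 = f_3 \in V$, and similarly for the other pairs. Applying this to $V_x$: for any $v \in T_x\Sigma \subset V_x$ we obtain $\widetilde{J}_x v = x \times v \in V_x$. But $\widetilde{J}_x v \in T_x\mathbb{S}^6 = x^\perp$, and $V_x \cap x^\perp = T_x\Sigma$, so $\widetilde{J}_x v \in T_x\Sigma$. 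Hence $\widetilde{J}_x(T_x\Sigma) \subseteq T_x\Sigma$, and since $\widetilde{J}_x$ is a linear isomorphism of the $2$-plane $T_x\Sigma$ into itself, equality holds and $\Sigma$ is holomorphic.

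All individual steps are short once the frame $(x, v, x \times v)$ is in hand; the step deserving the most care is the reduction in the first paragraph, namely confirming that the cone's tangent plane is \emph{exactly} $\mathbb{R}x \oplus T_x\Sigma$ (so the radial scaling plays no role) and that the calibration condition is genuinely pointwise. The only structural ingredient beyond the definitions is the closure of associative planes under $\times$, but this follows immediately from $g_0(a \times b, c) = \phi(a,b,c)$, $a \times b \perp a,b$, and $|a \times b|^2 = |a|^2|b|^2 - g_0(a,b)^2$, so I do not anticipate a serious obstacle.
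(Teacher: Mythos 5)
Your proof is correct. The paper states this proposition without proof, presenting it as a known fundamental fact (it goes back to Harvey and Lawson's work on calibrations), so there is no argument in the text to compare against; your route --- reducing to the pointwise statement that $V_x = \mathbb{R}x \oplus T_x\Sigma$ is an associative $3$-plane, verifying $\phi(x, v, x \times v) = |x \times v|^2 = 1$ in the forward direction, and using closure of associative planes under $\times$ (via the equality case of Cauchy--Schwarz applied to $g_0(f_1 \times f_2, f_3) = \phi(f_1,f_2,f_3) = 1$) in the converse --- is the standard one, and every step checks out. Two points are worth making explicit, though neither affects validity: in the converse, the closure lemma is established for the basis pairs $f_i \times f_j$ and must be extended to arbitrary pairs in $V_x$ by bilinearity of $\times$ before you may conclude $x \times v \in V_x$ for general $v \in T_x\Sigma$; and in the forward direction, the frame $(x, v, x \times v)$ determines a globally consistent orientation of the cone (the complex orientation of $T_x\Sigma$ followed by the radial direction), so that $\phi|_{C(\Sigma)} = \mathrm{vol}$ holds for a single choice of orientation rather than merely up to sign pointwise, as the paper's definition of an associative $3$-fold requires.
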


\indent So, as holomorphic curves are the links of associative cones, and since associative cones are homologically volume-minimizing, it follows that holomorphic curves are minimal surfaces.

\subsection{Holomorphic Bundles over Riemann Surfaces}

\indent \indent Let $E \to M$ be a complex vector bundle over a complex manifold $M$.  It is well-known \cite[$\S$1.3]{kobayashi14} that a holomorphic structure on $E$ is equivalent to a $\overline{\partial}$-operator, i.e., an operator
$$\overline{\partial} \colon \Gamma(E) \to \Omega^{0,1}(M) \otimes \Gamma(E)$$
satisfying both the relevant Leibniz rule and $\overline{\partial}^2 = 0$.  Given a complex vector bundle $E \to M$ equipped with both a connection $\nabla \colon \Gamma(E) \to \Omega^1(M; \mathbb{C}) \otimes \Gamma(E)$ and a holomorphic structure $\overline{\partial}$, we say that $\nabla$ and $\overline{\partial}$ are \textit{compatible} if $\nabla^{0,1} = \overline{\partial}$.

Note that if $E \to \Sigma$ is a complex vector bundle over a Riemann surface, then every connection $\nabla$ on $E$ has the property that $\nabla^{0,1}$ satisfies the Leibniz rule and squares to zero.  Said another way:

\begin{prop} \label{thm:Koszul-Malgrange} Let $E \to \Sigma$ be a complex vector bundle over a Riemann surface $\Sigma$.  For each connection $\nabla$, there exists a unique holomorphic structure on $E$ compatible with $\nabla$ (viz., $\overline{\partial} = \nabla^{0,1}$).
\end{prop}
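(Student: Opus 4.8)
The plan is to take the obvious candidate $\overline{\partial} := \nabla^{0,1}$ and verify that it defines a holomorphic structure compatible with $\nabla$, then argue that it is the only such structure. The one nontrivial input is the cited equivalence \cite[$\S$1.3]{kobayashi14}, which asserts that holomorphic structures on $E$ correspond bijectively to $\overline{\partial}$-operators obeying the Leibniz rule and the integrability condition $\overline{\partial}^2 = 0$. Granting this, the entire proposition reduces to checking those two algebraic/analytic properties for $\nabla^{0,1}$. Uniqueness is immediate from the definition of compatibility: a holomorphic structure is compatible with $\nabla$ precisely when its $\overline{\partial}$-operator equals $\nabla^{0,1}$, and since $\nabla^{0,1}$ is uniquely determined by $\nabla$, at most one compatible holomorphic structure can exist.

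For existence, I would first confirm the Leibniz rule. For any $f \in C^\infty(\Sigma; \mathbb{C})$ and $s \in \Gamma(E)$, the connection satisfies $\nabla(fs) = df \otimes s + f\,\nabla s$; projecting onto the $(0,1)$-component yields $\nabla^{0,1}(fs) = (\overline{\partial} f)\,s + f\,\nabla^{0,1}s$, which is exactly the Leibniz rule required of a $\overline{\partial}$-operator. I would then verify integrability. Here the hypothesis that the base is a Riemann surface does all the work: the operator $\overline{\partial}^2 = (\nabla^{0,1})^2$ takes values in $\Omega^{0,2}(\Sigma) \otimes \Gamma(E)$, but a complex manifold of complex dimension one carries no nonzero $(0,2)$-forms, so $\Omega^{0,2}(\Sigma) = 0$ and hence $\overline{\partial}^2$ vanishes identically. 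This is precisely the observation flagged in the paragraph preceding the statement.

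With the Leibniz rule and $\overline{\partial}^2 = 0$ in hand, the cited equivalence promotes $\overline{\partial} = \nabla^{0,1}$ to a genuine holomorphic structure on $E$, and by construction it satisfies $\nabla^{0,1} = \overline{\partial}$, i.e.\ it is compatible with $\nabla$. Combined with the uniqueness argument above, this completes the proof. The only place where real analysis hides is inside the cited equivalence itself: producing local frames $s_1, \dots, s_r$ with $\overline{\partial} s_i = 0$ amounts to solving the inhomogeneous $\overline{\partial}$-equation locally (the Koszul--Malgrange theorem), and the integrability condition $\overline{\partial}^2 = 0$ is exactly the hypothesis that makes this solvable. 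On a Riemann surface that hypothesis is automatic, so the apparent obstacle dissolves into the dimension count above.
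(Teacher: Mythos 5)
Your proposal is correct and follows exactly the route the paper takes: the paper's entire justification is the sentence preceding the statement, namely that $\nabla^{0,1}$ automatically satisfies the Leibniz rule and squares to zero on a Riemann surface (since $\Omega^{0,2}(\Sigma)=0$), after which the cited equivalence from Kobayashi does the rest. Your write-up just makes these checks and the uniqueness observation explicit.
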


\indent The holomorphic structure in Proposition \ref{thm:Koszul-Malgrange} is often called the \textit{Koszul-Malgrange holomorphic structure for $\nabla$}.  However, we shall occasionally abuse terminology and refer to $\nabla$ itself as the holomorphic structure.


\section{The Geometry of Holomorphic Curves in the $6$-Sphere}

\indent \indent In $\S$3.1 and $\S$3.2, we set up the moving frame for holomorphic curves in the $6$-sphere.  In $\S$3.3, we take a closer look at the class of \textit{null-torsion} holomorphic curves, the primary results being Proposition \ref{thm:HoloSections} and Proposition \ref{thm:GenusDegree}.  In $\S$3.4, we consider three different holomorphic structures on the normal bundle of a null-torsion holomorphic curve.

\subsection{Moving Frames for $\mathbb{S}^6$}


\indent \indent We begin by viewing $\mathbb{S}^6$ simply as an oriented Riemannian manifold (i.e., as a $6$-manifold with an $\text{SO}(6)$-structure).  Let $F_{\text{SO}(6)} \to \mathbb{S}^6$ denote the oriented orthonormal coframe bundle of $\mathbb{S}^6$.  Let $\omega \in \Omega^1(F_{\text{SO}(6)}; \mathbb{R}^6)$ denote the tautological $1$-form, and let $\psi \in \Omega^1(F_{\text{SO}(6)}; \mathfrak{so}(6))$ denote the Levi-Civita connection, so that we have
\begin{equation*}
d\omega = -\psi \wedge \omega.
\end{equation*}
So, if $(e_1, \ldots, e_6)$ is a local oriented orthonormal frame on an open set $U \subset \mathbb{S}^6$, then
\begin{equation} \label{eq:DFrames}
\overline{\nabla}e_i = -\psi_{ij} \otimes e_j
\end{equation}
where we are conflating (and will continue to conflate) the $1$-forms $\psi_{ij}$ on $F_{\text{SO}(6)}$ with their pullbacks $\sigma^*(\psi_{ij})$ on $U$ via the local section $\sigma \colon U \to F_{\text{SO}(6)}$ corresponding to $(e_1, \ldots, e_6)$.

\subsubsection{The $\text{SU}(3)$-Structure}


\indent \indent We now equip $\mathbb{S}^6$ with its standard $\text{SU}(3)$-structure $(\langle \cdot, \cdot \rangle, \widetilde{J}, \widetilde{\Omega}, \Upsilon)$, recalling $\S$\ref{sect:SU3-Structure}.  Let $\mathscr{P} := F_{\text{SU}(3)} \subset F_{\text{SO}(6)}$ denote the $\text{SU}(3)$-coframe bundle of $\mathbb{S}^6$.  There is a natural identification $\mathscr{P} \cong \text{G}_2$, but we will not use this fact explicitly.  Via the $\text{SU}(3)$-invariant splitting $\mathfrak{so}(6) = \mathfrak{su}(3) \oplus \mathbb{R}^6$ (orthogonal with respect to the Killing form), the restriction of the Levi-Civita connection to $\mathscr{P}$ decomposes as
\begin{equation} \label{eq:SU3Splitting}
\left.\psi\right|_{\mathscr{P}} = \widetilde{\gamma} + T(\omega),
\end{equation}
where $\widetilde{\gamma} \in \Omega^1(\mathscr{P}; \mathfrak{su}(3))$ is the natural $\text{SU}(3)$-connection and $T(\omega) \in \Omega^1(\mathscr{P}; \mathbb{R}^6)$ is the intrinsic torsion of the $\text{SU}(3)$-structure.  Thus, on $\mathscr{P}$, we have the first structure equations
\begin{equation*}
d\omega = -\widetilde{\gamma} \wedge \omega - T(\omega) \wedge \omega.
\end{equation*}
By writing the subspaces $\mathfrak{su}(3)$ and $\mathbb{R}^6$  of $\mathfrak{so}(6)$ in terms of explicit $6 \times 6$ matrices, we can express the connection matrix $\widetilde{\gamma}$ in the form
$$\widetilde{\gamma} = \left[\begin{array}{c c | c c | c c}
0 & -\beta_{11} & \alpha_{21} & -\beta_{21} & -\alpha_{31} & -\beta_{31} \\
\beta_{11} & 0 & \beta_{21} & \alpha_{21} & \beta_{31} & -\alpha_{31} \\ \hline
-\alpha_{21} & -\beta_{21} & 0 & -\beta_{22} & \alpha_{32} & -\beta_{32} \\ 
\beta_{21} & -\alpha_{21} & \beta_{22} & 0 & \beta_{32} & \alpha_{32} \\ \hline
\alpha_{31} & -\beta_{31} & -\alpha_{32} & -\beta_{32} & 0 & -\beta_{33} \\
\beta_{31} & \alpha_{31} & \beta_{32} & -\alpha_{32} & \beta_{33} & 0
\end{array}\right]$$
and calculate that the intrinsic torsion of $\mathbb{S}^6$ is
$$T(\omega) = \frac{1}{2}\left[\begin{array}{c c | c c | c c}
0 & 0 & \omega_5 & -\omega_6 & -\omega_3 & \omega_4 \\
0 & 0 & -\omega_6 & -\omega_5 & \omega_4 & \omega_3 \\ \hline
-\omega_5 & \omega_6 & 0 & 0 & \omega_1 & -\omega_2 \\ 
\omega_6 & \omega_5 & 0 & 0 & -\omega_2 & -\omega_1 \\ \hline
\omega_3 & -\omega_4 & -\omega_1 & \omega_2 & 0 & 0 \\
-\omega_4 & -\omega_3 & \omega_2 & \omega_1 & 0 & 0
\end{array}\right]\!.$$
Let $\overline{D} \colon \Gamma(T\mathbb{S}^6) \to \Omega^1(\mathbb{S}^6) \otimes \Gamma(T\mathbb{S}^6)$ denote the covariant derivative operator associated to the connection $\widetilde{\gamma}$.  If $(e_1, \ldots, e_6)$ is a local $\text{SU}(3)$-frame on $U \subset \mathbb{S}^6$, we have
\begin{equation} \label{eq:NablaFrames}
\overline{D} e_i = -\widetilde{\gamma}_{ij} \otimes e_j.
\end{equation}

\subsubsection{The $\text{SU}(3)$-Structure in Complex Notation}

\indent \indent It will often be convenient to have complex versions of the above equations.  To that end, let $\zeta \in \Omega^1(\mathscr{P}; \mathbb{C}^3)$ denote the complex tautological $1$-form, where:
\begin{align*}
\zeta_1 & = \omega_1 + i\omega_2 & \zeta_2 & = \omega_3 + i\omega_4 & \zeta_3 & = \omega_5 + i\omega_6
\end{align*}
Let $\gamma \in \Omega^1(\mathscr{P}; \mathfrak{su}(3))$ denote the natural $\text{SU}(3)$-connection on $\mathbb{S}^6$, regarded now as a complex $3 \times 3$ matrix (rather than a real $6 \times 6$ matrix).  In other words:
$$\left( \gamma_{ij} \right) = \begin{bmatrix}
\gamma_{11} & \gamma_{12} & \gamma_{13} \\
\gamma_{21} & \gamma_{22} & \gamma_{23} \\
\gamma_{31} & \gamma_{32} & \gamma_{33}
\end{bmatrix} = \begin{bmatrix}
i\beta_{11} & \alpha_{21} + i\beta_{21} & -\alpha_{31} + i\beta_{31} \\
-\alpha_{21} + i\beta_{21} & i\beta_{22} & \alpha_{32} + i\beta_{32} \\
\alpha_{31} + i\beta_{31} & -\alpha_{32} + i\beta_{32} & i\beta_{33} 
\end{bmatrix}\!.$$
In this notation, the \textit{first} and \textit{second structure equations} of $\mathbb{S}^6$ are \cite{bryant82}, \cite{bryant2006geometry}
\begin{align}
d\zeta_i & = -\gamma_{i\ell} \wedge \zeta_\ell + \overline{\zeta}_j \wedge \overline{\zeta}_k  \label{eq:StrEq1}  \\
d\gamma_{ij} & = \textstyle -\gamma_{ik} \wedge \gamma_{kj} + \frac{3}{4}\zeta_i \wedge \overline{\zeta}_j - \frac{1}{4} \delta_{ij}\, \zeta_\ell \wedge \overline{\zeta}_\ell  \label{eq:StrEq2}
\end{align}
where $(i,j,k)$ in the first structure equation is an even permutation of $(1,2,3)$. \\
\indent Extend both $\overline{\nabla}$ and $\overline{D}$ by $\mathbb{C}$-linearity to operators $\Gamma(T\mathbb{S}^6 \otimes_{\mathbb{R}} \mathbb{C}) \to \Gamma(T\mathbb{S}^6\otimes_{\mathbb{R}} \mathbb{C}) \otimes \Omega^1(\mathbb{S}^6; \mathbb{C})$.  In terms of a local $\text{SU}(3)$-frame $(e_1, \ldots, e_6)$ for $T\mathbb{S}^6$, we let
\begin{align*}
f_1 & = \frac{1}{2}(e_1 - ie_2) & f_2 & = \frac{1}{2}(e_3 - ie_4) & f_3 & = \frac{1}{2}(e_5 - ie_6) \\
\overline{f}_1 & = \frac{1}{2}(e_1 + ie_2) & \overline{f}_2 & = \frac{1}{2}(e_3 + ie_4) & \overline{f}_3 & = \frac{1}{2}(e_5 + ie_6).
\end{align*}
Note that $(f_1, f_2, f_3)$ is a local $\text{SU}(3)$-frame for $T^{1,0}\mathbb{S}^6$, while $(\overline{f}_1, \overline{f}_2, \overline{f}_3)$ is a local $\text{SU}(3)$-frame for $T^{0,1}\mathbb{S}^6$.  A calculation shows that
\begin{align}
\overline{\nabla}f_1 & = \textstyle \overline{D} f_1 + \frac{1}{2}( \zeta_2 \otimes \overline{f}_3 - \zeta_3 \otimes \overline{f}_2) \nonumber \\
\overline{\nabla}f_2 & = \textstyle \overline{D} f_2 + \frac{1}{2}( \zeta_3 \otimes \overline{f}_1 - \zeta_1 \otimes \overline{f}_3) \label{eq:CLinD} \\
\overline{\nabla}f_3 & = \textstyle \overline{D} f_3 + \frac{1}{2}( \zeta_1 \otimes \overline{f}_2 - \zeta_2 \otimes \overline{f}_1) \nonumber
\end{align}
and
\begin{equation}  \label{eq:NablaFramesCplx}
\overline{D} f_i = \gamma_{ji}\otimes f_j 
\end{equation}
where we underscore that $(\gamma_{ji}) = (\gamma_{ij})^T$. 

\subsection{Moving Frames for Holomorphic Curves in $\mathbb{S}^6$} \label{sec:MovingFramesHoloCurves}

\indent \indent We now turn our attention to holomorphic curves $u \colon \Sigma^2 \to \mathbb{S}^6$, always assuming for simplicity that $u$ is an (unramified) immersion.  In this section, we recall Bryant's ``holomorphic Frenet frame" for $u$, which will be central to our calculations.  Our discussion is essentially a self-contained summary of \cite[$\S$4]{bryant82}, though we have changed notation in several places.  Preparation of this section was aided by clarifying discussions in \cite{hashimoto2000} and \cite{lotay2011ruled}.

\indent Before getting started, let us give a brief overview of the various complex vector bundles over $\Sigma$ that we will need.  First, consider the complex rank $3$ bundle $u^*(T^{1,0}\mathbb{S}^6) \to \Sigma$ of $(1,0)$-vectors along $u(\Sigma)$.  Next, we let
$$L_T := T^{1,0}\Sigma \subset u^*(T^{1,0}\mathbb{S}^6)$$
and define the complex rank $2$ bundle
$$Q_{NB} := u^*(T^{1,0}\mathbb{S}^6)/L_T.$$
For $v \in u^*(T^{1,0}\mathbb{S}^6)$, we let $(v) \in Q_{NB}$ denote its projection to the quotient.

In the sequel, we will define a certain complex line subbundle $L_N \subset Q_{NB}$, from which we will set $L_B := Q_{NB}/L_N$.  As above, for $(v) \in Q_{NB}$, we let $(\!(v)\!) \in L_B$ denote its projection to $L_B$.  In summary, we have a diagram:
$$\begin{tikzcd}
L_T \arrow[r] & u^*(T^{1,0}\mathbb{S}^6) \arrow[d, "(\cdot)"] \\
L_N \arrow[r] & Q_{NB}  \arrow[d, "(\!(\cdot)\!)"] \\
                    & L_B
\end{tikzcd}$$
All complex vector bundles under consideration are assumed to be endowed with their obvious Hermitian metrics.  As Hermitian vector bundles, we will have isomorphisms
\begin{align}
u^*(T^{1,0}\mathbb{S}^6) & \simeq L_T \oplus L_N \oplus L_B \label{eq:CBundIso1} \\
Q_{NB} & \simeq L_N \oplus L_B. \label{eq:CBundIso2}
\end{align}
We will shortly equip all of these bundles with holomorphic structures, cautioning that the isomorphisms (\ref{eq:CBundIso1}) and (\ref{eq:CBundIso2}) generally will not hold in the holomorphic category.

\subsubsection{Holomorphic Structures} \label{sect:FirstAdapt}

\indent \indent To begin, recall the (complexified) $\text{SU}(3)$-connection $\overline{D}$ on $T\mathbb{S}^6 \otimes_{\mathbb{R}} \mathbb{C}$.  By restriction and pullback, we get an induced connection (still denoted $\overline{D}$) on $u^*(T^{1,0}\mathbb{S}^6) \to \Sigma$.  We endow $u^*(T^{1,0}\mathbb{S}^6)$ with the Koszul-Malgrange holomorphic structure for $\overline{D}$.  Since $u$ is an immersion, the complex line bundle $L_T := T^{1,0}\Sigma \subset u^*(T^{1,0}\mathbb{S}^6)$ is a holomorphic line subbundle, and we equip the quotient bundle $Q_{NB} := u^*(T^{1,0}\mathbb{S}^6)/L_T$ with the induced holomorphic structure.  These structures in place, we now make two frame adaptations.

\subsubsection{First Adaptation} \label{sect:FirstAdapt}

\indent \indent Let $(f_1, f_2, f_3)$ be an $\text{SU}(3)$-frame for $u^*(T^{1,0}\mathbb{S}^6)$.  For our first adaptation, we consider those frames for which
$$f_1 \in L_T = T^{1,0}\Sigma.$$
The set of such frames comprises a $\text{U}(2)$-subbundle $\mathscr{F}_1 \subset u^*\mathscr{P}$ over $\Sigma$, and we will refer to such $(f_1, f_2, f_3)$ as being \textit{$\mathrm{U}(2)$-adapted}.  On $\mathscr{F}_1$, we have that $\zeta_2 = \zeta_3 = 0$.  Differentiating these equations and applying Cartan's Lemma shows that there exist functions $\kappa, \mu \colon \mathscr{F}_1 \to \mathbb{C}$ for which
\begin{align*}
\gamma_{21} & = \kappa\zeta_1 & \gamma_{31} & = \mu\zeta_1.
\end{align*}
Writing $\kappa = \kappa_1 + i\kappa_2$ and $\mu = \mu_1 + i\mu_2$, where $\kappa_1, \kappa_2, \mu_1, \mu_2 \colon \mathscr{F}_1 \to \mathbb{R}$, a calculation shows that the second fundamental form may be expressed as
\begin{align} 
\text{I\!I}(e_1, e_1) & = \kappa_1e_3 + \kappa_2 e_4 + \mu_1 e_5 - \mu_2e_6 \nonumber \\
\text{I\!I}(e_1, e_2) & = -\kappa_2 e_3 + \kappa_1 e_4 + \mu_2 e_6 + \mu_1e_6 \label{eq:SecondFundSym} \\
\text{I\!I}(e_2, e_2) & = -\text{I\!I}(e_1, e_1). \nonumber
\end{align}
In particular, we observe that holomorphic curves are minimal surfaces.  Equation (\ref{eq:SecondFundSym}) also shows that the functions $\kappa, \mu$ are essentially equivalent to the second fundamental form. \\
\indent Using $\text{U}(2)$-adapted frames, we can understand the holomorphic structures on $L_T$ and $Q_{NB}$ more explicitly.  That is, the Chern connection of $L_T$ is given by
$$D^{L_T}f_1 = \gamma_{11} \otimes f_1$$
while the Chern connection of $Q_{NB}$ is given by
\begin{align*}
D^{Q_{NB}}(f_2) & = \gamma_{22} \otimes (f_2) + \gamma_{32} \otimes (f_3) \\
D^{Q_{NB}}(f_3) & = \gamma_{23} \otimes (f_2) + \gamma_{33} \otimes (f_3).
\end{align*}
\indent We now recast the second fundamental form as a holomorphic section.  In \cite[Lemma 4.3]{bryant82}, it is shown that
\begin{align*}
\Phi_{\text{I\!I}} & \in \Gamma(  L_T^* \otimes L_T^* \otimes Q_{NB}) \\ 
\Phi_{\text{I\!I}} & = \kappa\zeta_1 \otimes f^\vee_1 \otimes  (f_2) +  \mu\zeta_1 \otimes f^\vee_1  \otimes (f_3)
\end{align*}
is a well-defined (frame-independent) holomorphic section, where $f^\vee_1 \colon \mathscr{F}_1 \to L_T^*$ is the dual of $f_1$.  It is remarked in \cite[Lemma 4.4]{bryant82} that $\Phi_{\text{I\!I}} = 0$ if and only if $u$ is the totally geodesic $\mathbb{S}^2$.  On the other hand, if $u$ is not the totally geodesic $\mathbb{S}^2$, then the zeros of $\Phi_{\text{I\!I}}$ are isolated, hence finite (since $\Sigma$ is compact).  To streamline further discussion, we enact the following: \\

\noindent \textbf{Convention:} From now on, we assume that $u$ is not totally-geodesic. \\

\indent It is convenient to regard $\Phi_{\text{I\!I}}$ as a holomorphic section of $\text{Hom}( L_T \otimes L_T; Q_{NB})$.  
Thus, there is a holomorphic line subbundle, called $L_N \subset Q_{NB}$, such that
$$\Phi_{\text{I\!I}} \in H^0( \text{Hom}( L_T \otimes L_T; L_N) ).$$
To be more explicit, let $F$ denote the (effective) divisor of the holomorphic section $\Phi_{\text{I\!I}}$, i.e.,
$$F = \sum_{p \in \Sigma \colon \Phi_{\text{I\!I}}(p) = 0} \text{ord}_p(\Phi_{\text{I\!I}}) \cdot p,$$
and let $\mathcal{O}_F \to \Sigma$ be the corresponding holomorphic line bundle.  Viewing $\Phi_{\text{I\!I}} \in H^0( (L_T \otimes L_T)^* \otimes L_N )$, it follows that
$$L_N = \mathcal{O}_F \otimes L_T \otimes L_T.$$
Finally, we let $L_B := Q_{NB}/L_N$ and equip $L_B$ with the induced holomorphic structure.

\subsubsection{Second Adaptation}

\indent \indent For our second adaptation, we consider the $\text{U}(2)$-adapted frames $(f_1, f_2, f_3)$ for which
$$(f_2) \in L_N.$$
This adaptation defines a $T^2$-subbundle $\mathscr{F}_2 \subset \mathscr{F}_1 \subset u^*\mathscr{P}$ over $\Sigma$, and we refer to such frames as \textit{$T^2$-adapted}.  On $\mathscr{F}_2$, we have that $\gamma_{31} = 0$, so that $\mu = 0$.  Differentiating $\gamma_{31} = 0$ shows that $\gamma_{32}$ is a semibasic $(1,0)$-form, and hence
$$\gamma_{32} = \tau\zeta_1$$
for some function $\tau \colon \mathscr{F}_2 \to \mathbb{C}$.  In summary, if $(f_1, f_2, f_3)$ is a $T^2$-adapted frame, the equations (\ref{eq:NablaFramesCplx}) now read:  
\begin{equation} \label{eq:HoloFrenetFrame}
\overline{D}\begin{bmatrix}
f_1 \\
f_2 \\
f_3 \end{bmatrix} =
\begin{pmatrix}
\gamma_{11} & \kappa\zeta_1 & 0 \\
-\overline{\kappa}\overline{\zeta}_1  & \gamma_{22} &  \tau\zeta_1  \\
0 & -\overline{\tau}\overline{\zeta_1} & \gamma_{33}
\end{pmatrix} \otimes
\begin{bmatrix}
f_1 \\
f_2 \\
f_3 \end{bmatrix}\!.
\end{equation}
These are the \textit{holomorphic Frenet equations} for the holomorphic curve $u \colon \Sigma^2 \to \mathbb{S}^6$. \\
\indent Using $T^2$-adapted frames, we can understand the holomorphic structures on $L_N$ and $L_B$ more explicitly.  That is, the Chern connection of $L_N$ is given by
$$D^{L_N}(f_2) = \gamma_{22} \otimes (f_2)$$
and that of $L_B$ by
$$D^{L_B}(\!(f_3)\!) = \gamma_{33} \otimes (\!(f_3)\!).$$
\indent Now, by analogy with the familiar Frenet frame for curves in $\mathbb{R}^3$, one might be inclined to call $\tau$ the ``holomorphic torsion" of $u \colon \Sigma^2 \to \mathbb{S}^6$, but for the fact that $\tau$ depends on the choice of $T^2$-frame $(f_1, f_2, f_3)$.  However, the ``null-torsion" condition $\tau = 0$ turns out to be independent of frame.  Indeed, Bryant shows \cite[Lemma 4.5]{bryant82} that
\begin{align*}
\Phi_{\text{I\!I\!I}} & \in \Gamma( L_T^* \otimes L_N ^* \otimes L_B) \\ 
\Phi_{\text{I\!I\!I}} & =  \tau\zeta_1 \otimes (f^\vee_2) \otimes (\!( f_3 )\!)
\end{align*}
is a well-defined (frame-independent) holomorphic section.  The section $\Phi_{\text{I\!I\!I}}$ partitions the collection of (non-totally-geodesic) holomorphic curves into three classes: 
\begin{enumerate}
\item $\Phi_{\text{I\!I\!I}} = 0$ identically.
\item The zero set of $\Phi_{\text{I\!I\!I}}$ is finite and non-empty.
\item $\Phi_{\text{I\!I\!I}}$ is nowhere-vanishing.
\end{enumerate}
The generic situation is (2), and relatively little is known about this case.  As Bryant remarks \cite[p. 225]{bryant82}, the condition (3) is quite strong, implying a stringent relation on the line bundles $L_T, L_N, L_B$. 

\indent Holomorphic curves of type (1) are said to be \textit{null-torsion}, and are the focus of this work.  Note that every holomorphic curve of genus zero is null-torsion \cite[Theorem 4.6]{bryant82} or the totally-geodesic $2$-sphere.  It is shown in \cite{bolton94} that every null-torsion holomorphic curve is linearly full in $\mathbb{S}^6$ (i.e., is not contained in a totally-geodesic $\mathbb{S}^5$), implying that even the simplest null-torsion curves cannot be reduced to the study of minimal Legendrians in $\mathbb{S}^5$.

\indent It is a remarkable fact \cite[Theorem 4.10]{bryant82} that every compact Riemann surface admits a conformal branched immersion into $\mathbb{S}^6$ as a null-torsion holomorphic curve.  In his 1999 Ph.D. thesis \cite{rowland}, Rowland extended this result, showing that, in fact, every compact Riemann surface may be conformally \textit{embedded} as a null-torsion holomorphic curve in $\mathbb{S}^6$.

\begin{rmk} As of this writing, it is an open question whether every \textit{open} Riemann surface can be conformally embedded as a null-torsion holomorphic curve in $\mathbb{S}^6$.  It seems to the author that the techniques of \cite{alarcon19} may yield a positive solution to this problem.
\end{rmk}

\subsection{Null-Torsion Holomorphic Curves}

\indent \indent We now examine null-torsion holomorphic curves more closely.  In Proposition \ref{thm:NullTorsionEquiv}, we give two holomorphic interpretations of the null-torsion condition.  Then, in Proposition \ref{thm:c1LineBundles}, we will see how the null-torsion condition constrains the topologies of the line bundles $L_T, L_N, L_B$.  Using this, together with Riemann-Roch, we will calculate (Proposition \ref{thm:HoloSections}) the number of independent holomorphic sections of $L_N$ and $L_B^*$.

\subsubsection{Holomorphic Interpretations of Null-Torsion}

\indent \indent Let $u \colon \Sigma^2 \to \mathbb{S}^6$ holomorphic curve, and regard $\mathbb{S}^6 \subset \mathbb{R}^7$ as the usual unit sphere.  Its \textit{binormal Gauss map} is
\begin{align*}
b_u \colon \Sigma^2 & \to \text{Gr}_2^+(\mathbb{R}^7) \\
b_u(p) & = e_5 \wedge e_6 
\end{align*}
where $(e_1, \ldots, e_6)$ is a $T^2$-frame at $p \in \Sigma$.  One can check that $b_u$ is well-defined, independent of frame.  Now, consider the map
\begin{align*}
\text{Gr}_2^+(\mathbb{R}^7) & \to \mathbb{P}(\mathbb{C}^7) = \mathbb{CP}^6 \\
x \wedge y & \mapsto \text{span}_{\mathbb{C}}(x - iy)
\end{align*}
where $\{x,y\}$ is orthonormal.  This map is well-defined (independent of basis), injective, and its image is the complex hypersurface $\Lambda = \{[z] \in \mathbb{CP}^6 \colon z_1^2 + \cdots + z_7^2 = 0\}$.  Consequently, we may identify $\text{Gr}_2^+(\mathbb{R}^7) \simeq \Lambda \subset \mathbb{CP}^6$.

\begin{prop} \label{thm:NullTorsionEquiv} Let $u \colon \Sigma^2 \to \mathbb{S}^6$ holomorphic curve.  The following are equivalent: \\
\indent (i) $u$ is null-torsion. \\
\indent (ii) The binormal Gauss map $b_u \colon \Sigma^2 \to \Lambda$ is holomorphic. \\
\indent (iii) There is a holomorphic splitting $Q_{NB} \cong L_N \oplus L_B$.
\end{prop}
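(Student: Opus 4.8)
The plan is to establish the equivalence by a cycle, proving $(i) \Leftrightarrow (iii)$ directly from the moving-frame setup and then $(i) \Leftrightarrow (ii)$ via the Frenet equations. The key object linking all three conditions is the torsion function $\tau$ (equivalently, the holomorphic section $\Phi_{\text{I\!I\!I}}$), which vanishes identically precisely when $u$ is null-torsion.

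First I would dispatch $(i) \Leftrightarrow (iii)$. Recall from $\S3.2.2$ that with respect to a $T^2$-adapted frame, the Chern connection on $Q_{NB}$ acts as $D^{Q_{NB}}(f_2) = \gamma_{22} \otimes (f_2)$ and $D^{Q_{NB}}(f_3) = \gamma_{23}\otimes(f_2) + \gamma_{33}\otimes(f_3)$, where on $\mathscr{F}_2$ we have $\gamma_{32} = \tau\zeta_1$, hence $\gamma_{23} = -\overline{\tau}\,\overline{\zeta}_1$. The splitting $Q_{NB} \simeq L_N \oplus L_B$ of (\ref{eq:CBundIso2}) is holomorphic exactly when the off-diagonal $(0,1)$-part of the connection matrix vanishes, i.e. when $\gamma_{23}$ has no $\overline{\zeta}_1$ component. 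Since $\gamma_{23} = -\overline{\tau}\,\overline{\zeta}_1$, this $\overline\partial$-obstruction is precisely $\overline{\tau}$, so the splitting is holomorphic if and only if $\tau \equiv 0$, which is the null-torsion condition $\Phi_{\text{I\!I\!I}} \equiv 0$. One should note that the quotient structure means $L_B = Q_{NB}/L_N$ is always holomorphic; what $(iii)$ adds is that the subbundle $L_N$ admits a holomorphic complement, i.e. the extension splits in the holomorphic category.

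For $(i) \Leftrightarrow (ii)$, I would compute the $\overline\partial$-derivative of the binormal Gauss map. Under the identification $\text{Gr}_2^+(\mathbb{R}^7) \simeq \Lambda \subset \mathbb{CP}^6$, the map $b_u$ sends $p$ to $\text{span}_{\mathbb{C}}(e_5 - ie_6)$, which is essentially the line $L_B$ viewed inside $\mathbb{C}^7$ via $\overline{f}_3 = \tfrac12(e_5 + ie_6)$ (or its conjugate). Holomorphicity of $b_u$ amounts to the vanishing of the $(0,1)$-part of the derivative of the frame vector $f_3$ modulo $f_3$ itself, computed using (\ref{eq:HoloFrenetFrame}) and the correction term (\ref{eq:CLinD}) relating $\overline\nabla$ and $\overline{D}$. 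The relevant component of $\overline{D}f_3$ is $\gamma_{23}\otimes f_2 = -\overline{\tau}\,\overline{\zeta}_1 \otimes f_2$; the intrinsic-torsion correction from (\ref{eq:CLinD}) contributes terms in $\zeta_1, \zeta_2, \zeta_3$, which on $\Sigma$ (where $\zeta_2 = \zeta_3 = 0$) reduce to a $(1,0)$-term that does not obstruct holomorphicity. Thus the only anti-holomorphic obstruction is again $\overline{\tau}\,\overline{\zeta}_1$, and $b_u$ is holomorphic iff $\tau \equiv 0$.

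\textbf{The main obstacle} I anticipate is the careful bookkeeping in step $(i)\Leftrightarrow(ii)$: one must correctly account for how the nearly-Kähler intrinsic torsion $T(\omega)$ enters through (\ref{eq:CLinD}) when translating between the $\text{SU}(3)$-connection $\overline{D}$ (with respect to which the Frenet equations are clean) and the Levi-Civita connection $\overline\nabla$ (which governs the genuine almost-complex geometry of the Gauss map into $\mathbb{CP}^6$). A naive computation using only $\overline{D}$ might misidentify the holomorphicity condition. The resolution is that the correction terms in (\ref{eq:CLinD}) involving $\zeta_2, \zeta_3$ vanish upon restriction to $\Sigma$ (by the first adaptation), and the surviving correction is of type $(1,0)$, so it is invisible to the $\overline\partial$-operator governing holomorphicity of $b_u$. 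Once this is verified, all three conditions collapse to $\tau \equiv 0$, completing the equivalence.
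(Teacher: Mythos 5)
Your proposal is correct, and for one of the two equivalences it does genuinely more than the paper. The paper's proof of $(i)\Leftrightarrow(iii)$ is the one-line observation that $\tau$ is the second fundamental form (in the complex-geometric sense) of the holomorphic Hermitian subbundle $L_N \subset Q_{NB}$; your connection-matrix computation is exactly that observation made explicit, since the obstruction to the orthogonal complement of $L_N$ being a holomorphic subbundle is the $(0,1)$-component $\gamma_{23} = -\overline{\tau}\,\overline{\zeta}_1$. For $(i)\Leftrightarrow(ii)$ the paper simply cites Bryant's Theorem 4.7, whereas you actually carry out the frame computation, correctly isolating $-\overline{\tau}\,\overline{\zeta}_1 \otimes f_2$ as the only $(0,1)$-obstruction after the intrinsic-torsion correction of (\ref{eq:CLinD}) reduces to the $(1,0)$-term $\frac{1}{2}\zeta_1 \otimes \overline{f}_2$ on $\Sigma$. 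What your version buys is self-containedness; what it costs is one omitted (but easy) verification: the Gauss map lands in $\mathbb{CP}^6$, so holomorphicity is governed by the \emph{flat} derivative of the lift $p \mapsto f_3(p) \in \mathbb{C}^7$, not by $\overline{\nabla}f_3$ a priori. These agree along $\Sigma$ because the radial correction $-\langle X, f_3\rangle\, u$ vanishes for $X \in T\Sigma$ (as $f_3$ is normal to $\Sigma$), and you should say so. Two cosmetic slips do not affect the argument: $\mathrm{span}_{\mathbb{C}}(e_5 - ie_6)$ is spanned by $f_3$, not $\overline{f}_3$; and $D^{Q_{NB}}(f_2)$ also carries the term $\gamma_{32}\otimes(f_3) = \tau\zeta_1\otimes(f_3)$, which you dropped --- harmlessly, since it is of type $(1,0)$ and so invisible to the $\overline{\partial}$-operator.
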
 

\begin{proof} The equivalence of (i) and (ii) is \cite[Theorem 4.7]{bryant82}.  For the equivalence of (i) and (iii), we simply observe that $\tau$ is essentially the second fundamental form (in the sense of complex geometry, cf. \cite[Chap. V: $\S$14]{Demailly97} or \cite[$\S$1.6]{kobayashi14}) of the holomorphic Hermitian subbundle $L_N \subset Q_{NB}$.  . \end{proof}

\begin{cor} \label{thm:WirtDeg} If $u \colon \Sigma^2 \to \mathbb{S}^6$ is a null-torsion holomorphic curve, then its area $A = 4\pi d$, where $d$ is the degree of the binormal Gauss map.
\end{cor}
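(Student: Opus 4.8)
The plan is to deduce the area quantization directly from the holomorphicity of the binormal Gauss map, which is the content of Proposition~\ref{thm:NullTorsionEquiv}(ii). This is an instance of the standard ``area equals degree'' principle for holomorphic maps: I would first express the induced area form of $\Sigma$ as a fixed universal multiple of the pullback $b_u^*\omega_{FS}$ of the Fubini--Study K\"{a}hler form, and then invoke the Wirtinger theorem to integrate. Since $b_u$ is holomorphic, $\int_\Sigma b_u^*\omega_{FS}$ is exactly the degree $d$, and the proportionality constant will produce the factor $4\pi$.

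First I would record the induced area form in a $T^2$-adapted frame, where $\mathrm{vol}_\Sigma = \omega_1 \wedge \omega_2 = \tfrac{i}{2}\,\zeta_1 \wedge \overline{\zeta}_1$, and note that the pullbacks to $\Sigma$ satisfy $\zeta_2 = \zeta_3 = 0$, $\gamma_{31} = 0$, and --- crucially, because $u$ is null-torsion --- $\gamma_{32} = \tau\zeta_1 = 0$. The heart of the computation is then the curvature of the Chern connection of $L_B$, whose connection form is $\gamma_{33}$ and whose curvature is therefore $d\gamma_{33}$. Substituting the vanishing relations into the second structure equation (\ref{eq:StrEq2}) with $i = j = 3$, every quadratic term $\gamma_{3k}\wedge\gamma_{k3}$ drops out (since $\gamma_{31} = \gamma_{32} = 0$ and $\gamma_{33}\wedge\gamma_{33}=0$) and $\tfrac{3}{4}\zeta_3\wedge\overline{\zeta}_3 = 0$, leaving only $-\tfrac14\,\zeta_1\wedge\overline{\zeta}_1$. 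Using $\zeta_1\wedge\overline{\zeta}_1 = -2i\,\mathrm{vol}_\Sigma$, this gives $d\gamma_{33} = \tfrac{i}{2}\,\mathrm{vol}_\Sigma$.

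Finally I would identify $L_B$ with $b_u^*\mathcal{O}(-1)$: because $b_u(p) = [e_5 - ie_6] = [f_3]$ and the projection $f_3 \mapsto (\!(f_3)\!)$ generates $L_B$, the quotient map furnishes a holomorphic isomorphism, where holomorphicity of $b_u$ (Proposition~\ref{thm:NullTorsionEquiv}) is precisely what makes it compatible with the two holomorphic structures. Chern--Weil then yields $-d = \deg L_B = \tfrac{i}{2\pi}\int_\Sigma d\gamma_{33} = \tfrac{i}{2\pi}\cdot\tfrac{i}{2}\int_\Sigma \mathrm{vol}_\Sigma = -\tfrac{1}{4\pi}A$, so that $A = 4\pi d$. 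Equivalently, and more in the spirit of the Wirtinger formulation, the same curvature identity shows that $b_u^*\omega_{FS}$ --- the Chern form representing $c_1(L_B^*) = -\tfrac{i}{2\pi}d\gamma_{33}$ --- equals $\tfrac{1}{4\pi}\mathrm{vol}_\Sigma$, whence $d = \int_\Sigma b_u^*\omega_{FS} = \tfrac{A}{4\pi}$.

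The step I expect to demand the most care is not the structure-equation computation, which is short once the null-torsion vanishings are in place, but rather pinning down the normalizations and the bundle identification: one must fix the normalization of $\omega_{FS}$ so that a projective line integrates to $1$, track the signs and the factors of $i$ and $2\pi$ in $c_1 = \tfrac{i}{2\pi}\Theta$, and verify that the quotient map $f_3 \mapsto (\!(f_3)\!)$ really is holomorphic --- which is exactly the point where the holomorphicity of $b_u$, and hence the null-torsion hypothesis, is used.
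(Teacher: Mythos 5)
Your proposal is correct and follows essentially the route the paper intends: the corollary is stated as a consequence of the holomorphicity of $b_u$ from Proposition \ref{thm:NullTorsionEquiv} together with the Wirtinger theorem, and the structure-equation computation $i\,d\gamma_{33} = -\tfrac{1}{2}\,\mathrm{vol}$ (equivalently $c_1(L_B) = -A/4\pi$) that you use to relate $\mathrm{vol}_\Sigma$ to $b_u^*\omega_{FS}$ is exactly the one the paper carries out in the proof of Proposition \ref{thm:c1LineBundles}, where it is remarked to give ``another proof'' of the area quantization. The one small simplification available to you: for the identity $c_1(L_B) = -d$ a smooth complex-linear isomorphism $L_B \cong b_u^*\mathcal{O}(-1)$ already suffices (first Chern classes are topological), so the holomorphicity of $b_u$ is needed only to identify $\int_\Sigma b_u^*\omega_{FS}$ with the algebro-geometric degree, not to make the bundle identification holomorphic.
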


\subsubsection{Topological Consequences}

\indent \indent We now consider the topologies of the bundles $L_T, L_N, L_B$. Now, as Bryant points out \cite[p. 224]{bryant82}, the $\text{SU}(3)$-structure on $\mathbb{S}^6$ yields a holomorphic, metric isomorphism
$$\Lambda^3(T^{1,0}\mathbb{S}^6) \cong \underline{\mathbb{C}}$$
where $\underline{\mathbb{C}}$ is the trivial line bundle.  Consequently, there is an isomorphism of holomorphic line bundles
$$L_T \otimes L_N \otimes L_B \cong \underline{\mathbb{C}}$$
In particular, it follows that:
\begin{equation} \label{eq:c1sumzero}
c_1(L_T) + c_1(L_N) + c_1(L_B) = 0.
\end{equation}
 \\
\indent Here is an equivalent way to see this.  Since $\gamma$ is valued in $\mathfrak{su}(3)$, we have 
$$\gamma_{11} + \gamma_{22} + \gamma_{33} = 0.$$
Note that $\gamma_{11}$, $\gamma_{22}$, $\gamma_{33}$ are, respectively, the connection forms of the Chern connections on $L_T, L_N, L_B$.  From the structure equations (\ref{eq:StrEq2}), we may compute that their curvature $(1,1)$-forms $F_T, F_N, F_B$ are given by
\begin{align*}
F_T = i\,d\gamma_{11} & = \left(1 - 2|\kappa|^2 \right) \text{vol} =: K_T\,\text{vol} \\
F_N = i\,d\gamma_{22} & = \left( 2|\kappa|^2 - 2 |\tau|^2 - \frac{1}{2} \right) \text{vol} =: K_N\,\text{vol} \\
F_B = i\,d\gamma_{33} & = \left( 2 |\tau|^2 - \frac{1}{2} \right) \text{vol} =: K_B\,\text{vol}
\end{align*}
where $\text{vol} = \omega_1 \wedge \omega_2$ is the volume form on $\Sigma$, and where $K_T$, $K_N$, $K_B$ are defined by these equations.  Note that $K_T = K$ is simply the Gauss curvature of $\Sigma$.  Thus, we have
$$F_T + F_N + F_B = 0,$$
and hence
\begin{align*}
\frac{1}{2\pi} \int_\Sigma K_T \,\text{vol} + \frac{1}{2\pi} \int_\Sigma K_N \,\text{vol} + \frac{1}{2\pi} \int_\Sigma K_B \,\text{vol} = 0,
\end{align*}
which gives (\ref{eq:c1sumzero}) by Chern-Weil theory.  In the null-torsion case, the formulas for $K_N$ and $K_B$ simplify, yielding:

\begin{prop} \label{thm:c1LineBundles} If $u \colon \Sigma^2 \to \mathbb{S}^6$ is a null-torsion holomorphic curve of area $A = 4\pi d$, then
\begin{align*}
c_1(L_T) & = \chi(\Sigma) \\
c_1(L_N) & = -\chi(\Sigma) + d \\
c_1(L_B) & = -d.
\end{align*}
Moreover, counting with multiplicity, there are exactly $d - 3\chi(\Sigma)$ points $p \in \Sigma$ at which $\Phi_{\mathrm{I\!I}}(p) = 0$.
\end{prop}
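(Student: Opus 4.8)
The plan is to compute the three first Chern numbers directly from the curvature formulas already derived, and then to deduce the count of zeros of $\Phi_{\text{I\!I}}$ from the relationship $L_N = \mathcal{O}_F \otimes L_T \otimes L_T$. First I would exploit the null-torsion hypothesis $\tau \equiv 0$, which immediately simplifies the curvature formulas to
\begin{align*}
K_N & = 2|\kappa|^2 - \tfrac{1}{2}, & K_B & = -\tfrac{1}{2}.
\end{align*}
By Chern-Weil theory, $c_1(L_B) = \frac{1}{2\pi}\int_\Sigma K_B \,\text{vol} = -\frac{1}{4\pi}\int_\Sigma \text{vol} = -\frac{A}{4\pi}$, and since $A = 4\pi d$ by Corollary \ref{thm:WirtDeg}, this yields $c_1(L_B) = -d$. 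For $L_T$, I recall that $K_T = K$ is the Gauss curvature, so by Gauss-Bonnet $c_1(L_T) = \frac{1}{2\pi}\int_\Sigma K\,\text{vol} = \chi(\Sigma)$. The value $c_1(L_N) = -\chi(\Sigma) + d$ then follows instantly from the constraint (\ref{eq:c1sumzero}), without needing to integrate $K_N$ directly, though one could cross-check via $c_1(L_N) = \frac{1}{2\pi}\int_\Sigma(2|\kappa|^2 - \frac{1}{2})\,\text{vol}$.

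For the final claim about the zeros of $\Phi_{\text{I\!I}}$, I would argue as follows. The divisor $F$ of $\Phi_{\text{I\!I}}$ satisfies $\deg(\mathcal{O}_F) = \sum_p \text{ord}_p(\Phi_{\text{I\!I}})$, which is exactly the number of zeros counted with multiplicity that we wish to determine. From the isomorphism $L_N = \mathcal{O}_F \otimes L_T \otimes L_T$ established in $\S$\ref{sect:FirstAdapt}, taking first Chern classes gives
\begin{equation*}
c_1(L_N) = \deg(\mathcal{O}_F) + 2\,c_1(L_T).
\end{equation*}
Substituting $c_1(L_N) = -\chi(\Sigma) + d$ and $c_1(L_T) = \chi(\Sigma)$ and solving for $\deg(\mathcal{O}_F)$ yields $\deg(\mathcal{O}_F) = -\chi(\Sigma) + d - 2\chi(\Sigma) = d - 3\chi(\Sigma)$, as claimed.

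I do not expect any serious obstacle here: the curvature formulas are already in hand, and the whole argument is a matter of assembling Chern-Weil, Gauss-Bonnet, and the Wirtinger degree computation. The only point requiring mild care is bookkeeping the sign and factor conventions in the Chern-Weil normalization $c_1 = \frac{i}{2\pi}\int F = \frac{1}{2\pi}\int K\,\text{vol}$, so that the area integral $\int_\Sigma \text{vol} = A = 4\pi d$ feeds in with the correct coefficient to produce $c_1(L_B) = -d$ rather than some multiple thereof. Given that the problem statement fixes these conventions upstream, the verification is routine.
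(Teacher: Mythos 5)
Your proposal is correct and follows essentially the same route as the paper: Chern--Weil applied to $K_B = -\frac{1}{2}$ in the null-torsion case to get $c_1(L_B) = -d$, Gauss--Bonnet for $c_1(L_T) = \chi(\Sigma)$, the trace-free constraint (\ref{eq:c1sumzero}) for $c_1(L_N)$, and the relation $L_N = \mathcal{O}_F \otimes L_T \otimes L_T$ for the zero count. No meaningful differences to report.
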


\begin{proof} It is a standard fact that $c_1(L_T) = c_1(T^{1,0}\Sigma) = \chi(\Sigma)$.  Since $u$ is null-torsion, we have $\tau = 0$, so $K_B = -\frac{1}{2}$, so:
$$c_1(L_B) = \frac{1}{2\pi} \int_\Sigma K_B\,\text{vol} = -\frac{A}{4\pi}.$$
Note that this gives another proof of the fact that null-torsion holomorphic curves have area equal to $4\pi$ times a positive integer.  From Corollary \ref{thm:WirtDeg}, we know that $A = 4\pi d$, where $d$ is the degree of the binormal lift.  So, we obtain:
$$c_1(L_B) = -d.$$
The last claim is simply that
$$\sum_{p \in \Sigma \colon \Phi_{\text{I\!I}}(p) = 0} \text{ord}_p(\Phi_{\text{I\!I}}) = \deg(F) = c_1(\mathcal{O}_F) = c_1(L_T^* \otimes L_T^* \otimes L_N) = d - 3\chi(\Sigma).$$
\end{proof}

\subsubsection{Holomorphic Consequences}

\indent \indent We now invoke Riemann-Roch to understand the spaces of holomorphic sections of $L_N$ and $L_B^*$.

\begin{prop} \label{thm:HoloSections} Suppose $u \colon \Sigma^2 \to \mathbb{S}^6$ is a null-torsion holomorphic curve of area $A = 4\pi d$.  Then: \\
\indent (a) We have:
\begin{align*}
h^0(L_N) & = d - \frac{1}{2}\chi(\Sigma) \\
h^0(L_B^*) & = d + \frac{1}{2}\chi(\Sigma) + h^0(L_B \otimes K_\Sigma).
\end{align*}
\indent (b) If $d > 2g-2$, then $h^0(L_B \otimes K_\Sigma) = 0$.
\end{prop}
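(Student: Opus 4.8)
The plan is to apply the Riemann--Roch theorem together with Serre duality, using the first Chern numbers already computed in Proposition \ref{thm:c1LineBundles}. Recall that for a holomorphic line bundle $L$ over a compact Riemann surface $\Sigma$ of genus $g$, Riemann--Roch reads $h^0(L) - h^1(L) = c_1(L) + 1 - g$, while Serre duality gives $h^1(L) = h^0(K_\Sigma \otimes L^*)$, where $\deg K_\Sigma = 2g - 2 = -\chi(\Sigma)$. The elementary fact I will invoke repeatedly is that a holomorphic line bundle of strictly negative degree over a compact Riemann surface admits no nonzero holomorphic sections.

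For the first formula in (a), I would apply Riemann--Roch to $L_N$. Since $c_1(L_N) = d - \chi(\Sigma) = d + 2g - 2$, this gives $h^0(L_N) - h^1(L_N) = d + g - 1$. By Serre duality, $h^1(L_N) = h^0(K_\Sigma \otimes L_N^*)$, and a degree count yields $\deg(K_\Sigma \otimes L_N^*) = (2g-2) - (d + 2g - 2) = -d < 0$, so this space vanishes. Hence $h^0(L_N) = d + g - 1 = d - \tfrac{1}{2}\chi(\Sigma)$, as claimed.

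For the second formula in (a), I would apply Riemann--Roch to $L_B^*$. Since $c_1(L_B^*) = -c_1(L_B) = d$, we obtain $h^0(L_B^*) - h^1(L_B^*) = d + 1 - g$. This time Serre duality identifies $h^1(L_B^*) = h^0(K_\Sigma \otimes L_B)$, whose degree is $(2g-2) - d$ and need not be negative in general; I therefore simply retain this term, obtaining $h^0(L_B^*) = d + 1 - g + h^0(L_B \otimes K_\Sigma) = d + \tfrac{1}{2}\chi(\Sigma) + h^0(L_B \otimes K_\Sigma)$.

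Finally, part (b) is immediate from the same degree principle: $\deg(L_B \otimes K_\Sigma) = c_1(L_B) + (2g - 2) = -d + 2g - 2$, which is strictly negative precisely when $d > 2g - 2$, forcing $h^0(L_B \otimes K_\Sigma) = 0$. I expect no genuine obstacle here; once the Chern numbers are in hand, the whole argument is bookkeeping. The only point requiring (minor) care is the vanishing of the Serre-dual term in the $L_N$ computation, since it is exactly this vanishing that turns the first formula into an exact equality rather than a mere inequality, whereas for $L_B^*$ the corresponding term must be carried along because its degree has indefinite sign.
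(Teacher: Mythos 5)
Your proof is correct and follows the same route as the paper: Riemann--Roch plus Serre duality applied to $L_N$ and $L_B^*$, with the degree counts from Proposition \ref{thm:c1LineBundles} forcing the vanishing of $h^0(K_\Sigma \otimes L_N^*)$ while the $h^0(L_B \otimes K_\Sigma)$ term is retained. Nothing to add.
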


\begin{proof} (a) To begin, observe that
$$\deg(L_N^* \otimes K_\Sigma) = -c_1(L_N) - \chi(\Sigma) = -d < 0.$$
Since the line bundle $L_N^* \otimes K_\Sigma$ is negative, it has no non-trivial holomorphic sections.  Therefore, by Riemann-Roch, Serre Duality, and Proposition \ref{thm:c1LineBundles}, we obtain:
\begin{align*}
h^0(L_N) & = h^0(L_N^* \otimes K_\Sigma) + c_1(L_N) + \frac{1}{2}\chi(\Sigma) \\
& = 0 - \chi(\Sigma) + d + \frac{1}{2}\chi(\Sigma).
\end{align*}
Similarly, we have:
\begin{align*}
h^0(L_B^*) & = h^0(L_B \otimes K_\Sigma) + c_1(L_B^*) + \frac{1}{2}\chi(\Sigma) \\
& = d + \frac{1}{2}\chi(\Sigma) + h^0(L_B \otimes K_\Sigma).
\end{align*}
\indent (b) Letting $\mathscr{L} := L_B \otimes K_\Sigma$, we compute
$$\deg(\mathscr{L}) = c_1(L_B) + c_1(K_\Sigma) = -d + (2g -2).$$
Therefore, if $d > 2g-2$, then $\deg(\mathscr{L}) < 0 $, so $\mathscr{L}$ has no non-trivial holomorphic sections.
\end{proof}

\indent In light of Proposition \ref{thm:HoloSections}(b), it is of interest to know when the hypothesis ``$d > 2g - 2$" might be automatically satisfied.  To that end, we recall the following facts from complex algebraic geometry.  See \cite[$\S$2.3, page 253]{griffiths78} for a proof.

\begin{lem} \label{thm:AlgGeo} Let $\widetilde{u} \colon \Sigma^2 \to \mathbb{CP}^6$ be a non-degenerate complex curve of genus $g$ and degree $d$. \\
\indent (a) We have $d \geq 6$. \\
\indent (b) If $d = 6$, then $\widetilde{u}$ is the rational normal curve, which has genus $g = 0$. \\
\indent (c) If $7 \leq d \leq 11$, then $g \leq d - 6$.
\end{lem}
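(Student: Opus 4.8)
The three claims are all classical facts from the Castelnuovo theory of space curves, so the plan is to deduce each from two standard inputs: the minimal-degree bound for nondegenerate curves and Castelnuovo's genus inequality. Throughout I use that $\widetilde{u}(\Sigma) \subset \mathbb{CP}^6$ is irreducible (as $\Sigma$ is connected) and that ``nondegenerate'' means it is contained in no hyperplane. The underlying geometric tool is the passage to a general hyperplane section.

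For (a), the plan is to intersect with a generic hyperplane $H \subset \mathbb{CP}^6$. Then $\Gamma := \widetilde{u}(\Sigma) \cap H$ is a set of $d$ points in $H \cong \mathbb{P}^5$, and nondegeneracy of the curve forces $\Gamma$ to span $H$ (this is part of the General Position Theorem \cite{griffiths78}). But $d$ points span at most a $\mathbb{P}^{d-1}$, so spanning $\mathbb{P}^5$ requires $d - 1 \geq 5$, i.e. $d \geq 6$. For (b), the plan is to invoke the classification of varieties of minimal degree: an irreducible nondegenerate curve in $\mathbb{P}^r$ of the least possible degree $d = r$ is projectively equivalent to the rational normal curve (the image of $\mathbb{P}^1$ under the complete system $|\mathcal{O}(r)|$), and in particular has genus $g = 0$. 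Specializing to $r = 6$ gives exactly (b).

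For (c), the plan is to apply Castelnuovo's genus bound directly. With $r = 6$, so $r - 1 = 5$, write $d - 1 = 5m + \epsilon$ where $m = \lfloor (d-1)/5 \rfloor$ and $0 \leq \epsilon \leq 4$. Castelnuovo's inequality then reads
\[
g \;\leq\; \binom{m}{2}\cdot 5 \;+\; m\epsilon .
\]
It remains only to evaluate the right-hand side for $d \in \{7,8,9,10,11\}$. For $7 \leq d \leq 10$ one has $m = 1$ and $\epsilon = d - 6$, so the bound is $0 + 1\cdot(d-6) = d-6$; for $d = 11$ one has $m = 2$ and $\epsilon = 0$, so the bound is $\binom{2}{2}\cdot 5 = 5 = d - 6$. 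In every case $g \leq d - 6$, as asserted.

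The genuinely substantial ingredient is Castelnuovo's inequality itself (and, underlying (a) as well, the General Position Theorem), whose proof analyzes the Hilbert function of the section $\Gamma$, using that $d$ points in uniform general position impose at least $\min(d, 2k+1)$ independent conditions on hypersurfaces of degree $k$ in $\mathbb{P}^5$. I would not reprove this, but cite \cite[$\S$2.3]{griffiths78}; granting it, parts (a)--(c) follow, and the only actual work is the elementary arithmetic carried out in (c).
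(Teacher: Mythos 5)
Your proposal is correct and takes essentially the same approach as the paper: the paper gives no argument of its own for Lemma \ref{thm:AlgGeo}, deferring entirely to \cite[\S 2.3, p.~253]{griffiths78}, which is exactly the Castelnuovo-theory package (minimal-degree bound via the general hyperplane section, classification of curves of minimal degree, and Castelnuovo's genus inequality) that you invoke and then evaluate. Your arithmetic for $7 \leq d \leq 11$ checks out, including the boundary case $d = 11$ where $m = 2$, $\epsilon = 0$ gives the bound $5 = d - 6$.
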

\begin{prop} \label{thm:GenusDegree} Let $u \colon \Sigma^2 \to \mathbb{S}^6$ be a null-torsion holomorphic curve, where $\Sigma$ is a closed surface of genus $g$ and area $A = 4\pi d$.  If $g \leq 6$, then $d > 2g-2$. 
\end{prop}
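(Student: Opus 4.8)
The plan is to apply the algebro-geometric bounds of Lemma~\ref{thm:AlgGeo} to the binormal Gauss map $b_u$. By Proposition~\ref{thm:NullTorsionEquiv}, the null-torsion hypothesis guarantees that $b_u \colon \Sigma \to \mathbb{CP}^6$ is a holomorphic curve; by Corollary~\ref{thm:WirtDeg} it has degree $d$, and its domain $\Sigma$ has genus $g$. The one substantive input I need is that $b_u$ is \emph{non-degenerate} (linearly full) in $\mathbb{CP}^6$: this I would deduce from the fact that $u$ is neither the totally-geodesic $\mathbb{S}^2$ (our standing convention) nor contained in any totally-geodesic $\mathbb{S}^5$, the latter being the linear-fullness result of Bolton--Vrancken--Woodward \cite{bolton94}. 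With non-degeneracy in hand, Lemma~\ref{thm:AlgGeo} applies to $b_u$.

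Since $d$ and $g$ are integers, the claim $d > 2g-2$ is equivalent to $d \geq 2g-1$, and I would verify this case by case. For $0 \leq g \leq 3$, part (a) already gives $d \geq 6 \geq 2g-1$. For $g = 4$, part (a) gives $d \geq 6$ while part (b) forbids $d = 6$ (which would force $g = 0$), so $d \geq 7 = 2g-1$. For $g = 5$, again $d \neq 6$; and if $7 \leq d \leq 11$ then part (c) yields $5 = g \leq d-6$, hence $d \geq 11$, so in every case $d \geq 11 > 9 = 2g-1$. Finally, for $g = 6$, part (b) rules out $d = 6$ and part (c) rules out the entire range $7 \leq d \leq 11$ (as these would give $6 = g \leq d-6 \leq 5$, absurd), forcing $d \geq 12 > 11 = 2g-1$.

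The only non-routine point is the non-degeneracy of $b_u$; granting that, the rest is bookkeeping. I expect the cases $g = 5$ and $g = 6$ to require the most care, since there one must use the contrapositive of part (c) together with its range restriction $7 \leq d \leq 11$ to push $d$ up past the bound. This also explains the threshold in the hypothesis: for $g \leq 3$ the crude estimate $d \geq 6$ suffices, while Lemma~\ref{thm:AlgGeo} only controls $d$ in the range $d \leq 11$, so the argument as written cannot be extended beyond $g = 6$ without a sharper classification of low-degree curves in $\mathbb{CP}^6$.
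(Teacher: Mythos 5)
Your proposal is correct and follows essentially the same route as the paper: both reduce the claim to Lemma~\ref{thm:AlgGeo} applied to the binormal Gauss map and then run the same case analysis on $g$ (the paper disposes of $g \leq 4$ at once using $d \geq 7$, and handles $g = 5, 6$ exactly as you do via the contrapositive of part (c)). The only difference is cosmetic: you explicitly flag the non-degeneracy of $b_u$ and attribute it to linear fullness, whereas the paper simply asserts it.
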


\begin{proof} If $u$ is totally-geodesic, the claim is trivial, so assume otherwise.  Then its binormal Gauss map $b_u \colon \Sigma^2 \to \Lambda \subset \mathbb{CP}^6$ is non-degenerate, so $d \geq 6$.  If $d = 6$, then by Lemma \ref{thm:AlgGeo}(b), we have $g = 0$, fulfilling the bound $d > 2g-2$.  Thus, we may assume that $d \geq 7$ for the remainder of this proof. \\
\indent If $g \leq 4$, then $2g - 2 \leq 6 < 7 \leq d$, so the bound is fulfilled in this case.  Suppose now that $g = 5$.  If we had $d \leq 2g-2 = 8$, then Lemma \ref{thm:AlgGeo}(c) would imply that $g \leq 2$, which is absurd.  Analogous reasoning holds for $g = 6$.
\end{proof}

\subsection{Structures on the Normal Bundle} \label{sec:StructuresOnNormal}

\indent \indent Let $u \colon \Sigma^2 \to \mathbb{S}^6$ be a holomorphic curve (which may or may not be null-torsion).  With respect to the splitting $u^*(T\mathbb{S}^6) = T\Sigma \oplus N\Sigma$, let $\nabla^\top$, $\nabla^\perp$ and $D^\top$, $D^\perp$ denote the tangential and normal connections for $\overline{\nabla}$ and $\overline{D}$.  In this section, we equip the normal bundle $N\Sigma$ with various holomorphic structures and compare their properties.  Since this is an important but perhaps technical point, we now provide a brief overview of this section. \\
\indent Thus far, we have been focused on the $\text{G}_2$-invariant almost-complex structure $\widetilde{J}$ on $\mathbb{S}^6$.  By restriction, we get a complex structure, also called $\widetilde{J}$, on $N\Sigma$.  Decomposing the complexification $N\Sigma \otimes_{\mathbb{R}} \mathbb{C}$ into $\widetilde{J}$-eigenbundles
$$N\Sigma \otimes_{\mathbb{R}} \mathbb{C} = \widetilde{N}^{1,0} \oplus \widetilde{N}^{0,1}$$
we will ask how the $\mathbb{C}$-linear extensions of $\nabla^\perp$ and $D^\perp$ relate to this decomposition.  The upshot is that the complex bundle $\widetilde{N}^{1,0}$ can always be endowed with a holomorphic structure $\overline{\partial}^{\text{SU}}$ compatible with $D^\perp$.  As shown by Lotay \cite{lotay11asymp}, this holomorphic structure plays a key role in the deformation theory of associative $3$-folds in $\mathbb{R}^7$ asymptotic to the cone on $\Sigma$. \\
\indent However, for our study of the second variation of area, we will need to consider a \textit{different} complex structure on $N\Sigma = E_N \oplus E_B$, which we call $\widehat{J}$.  This alternate complex structure $\widehat{J}$ will agree with $\widetilde{J}$ on $E_N$, but differ on $E_B$.  Again, we will decompose $N\Sigma \otimes_{\mathbb{R}} \mathbb{C}$ into $\widehat{J}$-eigenbundles
$$N\Sigma \otimes_{\mathbb{R}} \mathbb{C} = \widehat{N}^{1,0} \oplus \widehat{N}^{0,1}$$
and consider how the $\mathbb{C}$-linear extensions of $\nabla^\perp$ and $D^\perp$ interact with this decomposition.  The result is that in the \textit{null-torsion case}, the complex bundle $\widehat{N}^{1,0}$ can be equipped with two different holomorphic structures: one called $\overline{\partial}^\nabla$ that is compatible with $\nabla^\perp$, and another called  $\overline{\partial}^{D}$ that is compatible with $D^\perp$.  Comparing these two structures on $\widehat{N}^{1,0}$ is the idea behind Theorem 1.1.

\subsubsection{The Induced Complex Structure} \label{sec:J1}

\indent \indent Let $u \colon \Sigma \to \mathbb{S}^6$ be a holomorphic curve.  It is easy to check that the covariant derivative operators $\nabla^\perp$ and $D^\perp$ on the normal bundle $N\Sigma$ interact with the complex structure $\widetilde{J}$ as follows: 

\begin{prop} \label{prop:JTilde} 
The complex structure $\widetilde{J}$ is not $\nabla^\perp$-parallel, but is $D^\perp$-parallel (i.e., $D^\perp \widetilde{J} = 0$).
\end{prop}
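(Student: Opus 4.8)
The plan is to derive both halves of the proposition from two facts already in hand: that the $\mathrm{SU}(3)$-connection $\overline{D}$ preserves $\widetilde{J}$ on all of $T\mathbb{S}^6$ (its connection matrix $\gamma$ is $\mathfrak{su}(3)$-valued, so $\overline{D}\widetilde{J}=0$ ambiently), and that $\widetilde{J}$ preserves both $T\Sigma$ and $N\Sigma$ (since $u$ is a holomorphic curve and $\widetilde{J}$ is an isometry). The cleanest bookkeeping is in terms of the $\widetilde{J}$-eigenbundle decomposition $N\Sigma\otimes_{\mathbb{R}}\mathbb{C}=\widetilde{N}^{1,0}\oplus\widetilde{N}^{0,1}$: in a $T^2$-adapted frame one has $\widetilde{N}^{1,0}=\mathrm{span}(f_2,f_3)$ and $\widetilde{N}^{0,1}=\mathrm{span}(\overline{f}_2,\overline{f}_3)$, and $\widetilde{J}$ acts as multiplication by $+i$ on the former and $-i$ on the latter. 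Consequently $\widetilde{J}$ is parallel for a given normal connection exactly when that connection preserves this type splitting, which reduces the whole proposition to reading off the appropriate components of $\overline{D}$ and $\overline{\nabla}$.

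For the $D^\perp$-statement I would take the normal part of \eqref{eq:NablaFramesCplx}. Since $\overline{D}f_i=\gamma_{ji}\otimes f_j$ and $f_1\in T^{1,0}\Sigma$ is tangential, the terms $\gamma_{12}\otimes f_1$ and $\gamma_{13}\otimes f_1$ are tangential and drop out under projection to $N\Sigma$; the surviving normal parts are $D^\perp(f_2)=\gamma_{22}\otimes f_2+\gamma_{32}\otimes f_3$ and $D^\perp(f_3)=\gamma_{23}\otimes f_2+\gamma_{33}\otimes f_3$, both valued in $\mathrm{span}(f_2,f_3)=\widetilde{N}^{1,0}$. Hence $D^\perp$ preserves $\widetilde{N}^{1,0}$ (and by conjugation $\widetilde{N}^{0,1}$), so $D^\perp\widetilde{J}=0$. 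Invariantly, the same conclusion follows by writing $\overline{D}_XN=W^D_XN+D^\perp_XN$ with $W^D_XN\in T\Sigma$ for $N\in\Gamma(N\Sigma)$, $X\in\Gamma(T\Sigma)$, and applying $\overline{D}\widetilde{J}=0$ together with the $\widetilde{J}$-invariance of $T\Sigma$ and $N\Sigma$ to obtain $D^\perp_X(\widetilde{J}N)=\widetilde{J}(D^\perp_XN)$.

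For the non-parallelism of $\nabla^\perp$ I would exploit the discrepancy between $\overline{\nabla}$ and $\overline{D}$ in \eqref{eq:CLinD}. Taking normal parts there, the intrinsic-torsion contribution $-\tfrac{1}{2}\zeta_1\otimes\overline{f}_3$ to $\overline{\nabla}f_2$ lands in $\widetilde{N}^{0,1}$, giving $\nabla^\perp f_2=\gamma_{22}\otimes f_2+\gamma_{32}\otimes f_3-\tfrac{1}{2}\zeta_1\otimes\overline{f}_3$, which no longer respects the type splitting. A short computation then yields $(\nabla^\perp_{e_1}\widetilde{J})(f_2)=i\,\nabla^\perp_{e_1}f_2-\widetilde{J}(\nabla^\perp_{e_1}f_2)=-i\,\zeta_1(e_1)\,\overline{f}_3=-i\,\overline{f}_3$, using $\widetilde{J}\overline{f}_3=-i\overline{f}_3$ and $\zeta_1(e_1)=\omega_1(e_1)=1$. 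Since this is nonzero, $\widetilde{J}$ is not $\nabla^\perp$-parallel.

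The only point requiring care — hardly an obstacle — is juggling the tangential/normal and $(1,0)/(0,1)$ decompositions at once: one must confirm that the $\gamma_{12}$ and $\gamma_{13}$ pieces of $\overline{D}f_2,\overline{D}f_3$ are precisely the tangential contributions (through $f_1$) and may be discarded, while the surviving term from \eqref{eq:CLinD} is genuinely of mixed $\widetilde{J}$-type. Everything else is a direct frame calculation.
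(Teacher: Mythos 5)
Your proposal is correct and matches the paper's (omitted) verification: the paper simply records the frame formulas $\nabla^\perp f_2 = \gamma_{22}\otimes f_2 + \tau\zeta_1\otimes f_3 - \tfrac{1}{2}\zeta_1\otimes\overline{f}_3$ and $D^\perp f_2 = \gamma_{22}\otimes f_2 + \tau\zeta_1\otimes f_3$ (and their $f_3$ analogues) immediately after the proposition, which is exactly your observation that $D^\perp$ preserves the $\widetilde{J}$-type splitting while the intrinsic-torsion term $-\tfrac{1}{2}\zeta_1\otimes\overline{f}_3$ prevents $\nabla^\perp$ from doing so. Your explicit evaluation $(\nabla^\perp_{e_1}\widetilde{J})(f_2) = -i\,\overline{f}_3 \neq 0$ and the invariant argument via $\overline{D}\widetilde{J}=0$ are both sound.
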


\indent We now complexify $N\Sigma$ and extend $\widetilde{J}$ by $\mathbb{C}$-linearity.  In the usual way, we have a decomposition of $N\Sigma \otimes_{\mathbb{R}} \mathbb{C}$ into $\widetilde{J}$-eigenbundles, say
$$N\Sigma \otimes_{\mathbb{R}} \mathbb{C} = \widetilde{N}^{1,0} \oplus \widetilde{N}^{0,1}$$
where, for example, $\widetilde{N}^{1,0} = \{\xi \in N\Sigma \otimes_{\mathbb{R}} \mathbb{C} \colon \widetilde{J}\xi = i\xi \} = \textstyle \{ \frac{1}{2}(\eta - i\widetilde{J}\eta) \colon \eta \in N\Sigma \}$.
As complex vector bundles, there is an isomorphism $(N\Sigma, \widetilde{J}) \simeq (\widetilde{N}^{1,0}, i)$
via $\eta \mapsto \frac{1}{2}(\eta - i\widetilde{J}\eta)$ with inverse $\xi \mapsto \frac{1}{2}(\xi + \overline{\xi})$. There is also a well-defined isomorphism of complex vector bundles
\begin{align}
(\widetilde{N}^{1,0}, i) & \to Q_{NB} \label{eq:NQIso} \\
v^\perp & \mapsto (v) \nonumber
\end{align}
where we decompose $v \in u^*(T^{1,0}\mathbb{S}^6)$ as $v = v^\top + v^\perp$ with tangential part $v^\top \in T^{1,0}\Sigma$ and normal part $v^\perp \in \widetilde{N}^{1,0}\Sigma$.  In terms of a local $T^2$-frame $(e_1, \ldots, e_6)$, the isomorphisms $(N\Sigma, \widetilde{J}) \simeq (\widetilde{N}^{1,0}, i) \simeq Q_{NB} \simeq L_N \oplus L_B$ are simply:
\begin{align*}
e_3 & \mapsto f_2 \mapsto (f_2) \mapsto (f_2) \oplus 0 & e_5 & \mapsto f_3 \mapsto (f_3) \mapsto 0 \oplus (\!(f_3)\!) \\
e_4 & \mapsto if_2 \mapsto (if_2) \mapsto (if_2) \oplus 0 & e_6 & \mapsto if_3 \mapsto (if_3) \mapsto 0 \oplus (\!(if_3)\!).
\end{align*}

\indent Now, we have already equipped $Q_{NB}$, $L_N$, and $L_B$ with holomorphic structures, and we would like to endow $\widetilde{N}^{1,0}$ with a holomorphic structure as well.  There is an obvious way to do this: we can use the isomorphism (\ref{eq:NQIso}) to pull back the holomorphic structure on $Q_{NB}$ to $\widetilde{N}^{1,0}$.  Unwinding the definitions shows that this is precisely the Koszul-Malgrange structure for the $\mathbb{C}$-linear extension of $D^\perp$ on $\widetilde{N}^{1,0}$.

\indent Let us illustrate this holomorphic structure in terms of a local $T^2$-frame $(f_1, f_2, f_3)$.  To begin, extend both $\nabla^\perp$ and $D^\perp$ by $\mathbb{C}$-linearity to operators
$$\nabla^\perp, D^\perp \colon \Gamma(N\Sigma \otimes_{\mathbb{R}} \mathbb{C}) \to \Omega^1(\Sigma; \mathbb{C}) \otimes \Gamma(N\Sigma \otimes_{\mathbb{R}} \mathbb{C}).$$
From (\ref{eq:CLinD}) and (\ref{eq:HoloFrenetFrame}), we see that
\begin{align*}
\nabla^\perp f_2 & = \textstyle \gamma_{22} \otimes f_2 + \tau\zeta_1 \otimes f_3 - \frac{1}{2}\zeta_1 \otimes \overline{f}_3 & D^\perp f_2 & = \gamma_{22} \otimes f_2 + \tau\zeta_1 \otimes f_3 \\
\nabla^\perp f_3 & = \textstyle \overline{\tau}\overline{\zeta}_1 \otimes f_2 + \frac{1}{2}\zeta_1 \otimes \overline{f}_2 + \gamma_{33} \otimes f_3 & D^\perp f_3 & = \overline{\tau}\overline{\zeta}_1 \otimes f_2 + \gamma_{33} \otimes f_3.
\end{align*}
Thus, the restriction of $\nabla^\perp$ to $\widetilde{N}^{1,0}\Sigma$ does not give a well-defined connection, whereas the restriction of $D^\perp$ to $\widetilde{N}^{1,0}$ does.  That is, we have a covariant derivative operator
$$D^\perp \colon \Gamma(\widetilde{N}^{1,0}) \to \Omega^{1}(\Sigma; \mathbb{C}) \otimes \Gamma(\widetilde{N}^{1,0}),$$
which gives $\widetilde{N}^{1,0}$ the holomorphic structure described in the previous paragraph.  Composing this $D^\perp$ with the projection $T\Sigma \otimes_{\mathbb{R}} \mathbb{C} \to T^{0,1}\Sigma$ gives the corresponding $\overline{\partial}$-operator
$$\overline{\partial}^\text{SU} \colon \Gamma(\widetilde{N}^{1,0}) \to \Omega^{0,1}(\Sigma) \otimes \Gamma(\widetilde{N}^{1,0}).$$

\subsubsection{A Second Complex Structure}  \label{sec:J2}

\indent \indent Let $u \colon \Sigma^2 \to \mathbb{S}^6$ be a holomorphic curve.  In terms of a local $T^2$-frame $(e_1, \ldots, e_6)$, we have:
\begin{align*}
\widetilde{J}e_3 & = e_4 & \widetilde{J}e_5 & = e_6.
\end{align*}
We now define a new complex structure $\widehat{J}$ on $N\Sigma$ by declaring
\begin{align*}
\widehat{J}e_3 & = e_4 & \widehat{J}e_5 & = -e_6.
\end{align*}
The following shows how $\nabla^\perp$ and $D^\perp$ relate to $\widehat{J}$, and should be contrasted with Proposition \ref{prop:JTilde}.

\begin{prop} Let $u \colon \Sigma \to \mathbb{S}^6$ be a holomorphic curve.  The following are equivalent: \\
\indent (i) $u$ is null-torsion. \\
\indent (ii) $\nabla^\perp \widehat{J} = 0$. \\
\indent (iii) $D^\perp \widehat{J} = 0$.
\end{prop}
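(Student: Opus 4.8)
The plan is to reduce all three conditions to the single scalar identity $\tau = 0$, which by the definition of $\Phi_{\text{I\!I\!I}}$ is precisely the null-torsion condition. The organizing principle is the elementary fact that a connection $\nabla$ on $N\Sigma$ satisfies $\nabla \widehat{J} = 0$ if and only if its $\mathbb{C}$-linear extension preserves the $\widehat{J}$-eigenbundle decomposition $N\Sigma \otimes_{\mathbb{R}} \mathbb{C} = \widehat{N}^{1,0} \oplus \widehat{N}^{0,1}$. This is immediate from applying $\nabla(\widehat{J}\xi) = \widehat{J}(\nabla \xi)$ to eigensections: preservation of the $(\pm i)$-eigenbundles is equivalent to parallelism of $\widehat{J}$. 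So the whole statement becomes a bookkeeping question about which frame terms land in which eigenbundle.

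First I would pin down the eigenbundles explicitly. Extending $\widehat{J}$ by $\mathbb{C}$-linearity and using $\widehat{J}e_3 = e_4$, $\widehat{J}e_5 = -e_6$ together with $\widehat{J}^2 = -\mathrm{Id}$, a one-line computation gives $\widehat{J}f_2 = if_2$ and $\widehat{J}f_3 = -if_3$ (contrast $\widetilde{J}f_3 = +if_3$). Hence $\widehat{N}^{1,0} = \mathrm{span}_{\mathbb{C}}(f_2, \overline{f}_3)$ and $\widehat{N}^{0,1} = \mathrm{span}_{\mathbb{C}}(\overline{f}_2, f_3)$; equivalently $\widehat{J}$ agrees with $\widetilde{J}$ on $E_N$ but is its negative on $E_B$, matching the defining formulas.

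Next I would simply read off the connection formulas already recorded for $\nabla^\perp f_2$, $\nabla^\perp f_3$ and $D^\perp f_2$, $D^\perp f_3$, obtaining the action on $\overline{f}_2, \overline{f}_3$ by conjugation (since $\nabla^\perp$ and $D^\perp$ are real, $\nabla^\perp \overline{f}_j = \overline{\nabla^\perp f_j}$). The only components that can leave $\widehat{N}^{1,0}$ are the $\tau$-terms: $\nabla^\perp f_2$ contains $\tau \zeta_1 \otimes f_3$ with $f_3 \in \widehat{N}^{0,1}$, and $\nabla^\perp \overline{f}_3$ contains $\tau \zeta_1 \otimes \overline{f}_2$ with $\overline{f}_2 \in \widehat{N}^{0,1}$. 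Every remaining term (the $\gamma_{22}$-, $\gamma_{33}$-, and $\mp\tfrac{1}{2}\zeta_1$-pieces) now stays inside $\widehat{N}^{1,0}$, because $\overline{f}_3 \in \widehat{N}^{1,0}$. Thus $\nabla^\perp$ preserves the eigenbundles if and only if $\tau = 0$, yielding (i) $\Leftrightarrow$ (ii). Running the identical check on $D^\perp f_2, D^\perp f_3$ — which differ from $\nabla^\perp$ only by the intrinsic-torsion $\overline{f}_j$-terms that lie in $\widehat{N}^{1,0}$ — isolates the same $\tau$-obstruction, giving (i) $\Leftrightarrow$ (iii).

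I do not anticipate a genuine obstacle, since the content is purely algebraic once Step 1 is done; the one point needing care is the sign bookkeeping there. It is worth contrasting with Proposition \ref{prop:JTilde}: under $\widetilde{J}$ one has $f_3 \in \widetilde{N}^{1,0}$, so the intrinsic-torsion terms $\mp\tfrac{1}{2}\zeta_1 \otimes \overline{f}_j$ fall into the \emph{opposite} eigenbundle for $\nabla^\perp$ — which is exactly why $\widetilde{J}$ is never $\nabla^\perp$-parallel, even when $\tau = 0$ — while the $D^\perp$-formulas keep $\tau\zeta_1\otimes f_3$ inside $\widetilde{N}^{1,0}$, so $D^\perp \widetilde{J} = 0$ holds unconditionally. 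Switching to $\widehat{J}$ is precisely the device that relocates those terms: the harmless intrinsic-torsion pieces now stay in $\widehat{N}^{1,0}$ for \emph{both} connections, so the null-torsion condition $\tau = 0$ emerges as the sole obstruction in (ii) and (iii) alike.
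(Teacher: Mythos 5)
Your proof is correct and is essentially the paper's argument in complexified form: the paper checks $\nabla^\perp(\widehat{J}e_\alpha)-\widehat{J}(\nabla^\perp e_\alpha)$ and $D^\perp(\widehat{J}e_\alpha)-\widehat{J}(D^\perp e_\alpha)$ directly in the real frame $(e_3,\dots,e_6)$ and finds that both are controlled by $\alpha_{32},\beta_{32}$, i.e.\ by $\gamma_{32}=\tau\zeta_1$, which is exactly your observation that the only terms of $\nabla^\perp f_2$, $\nabla^\perp\overline{f}_3$, $D^\perp f_2$, $D^\perp\overline{f}_3$ leaving $\widehat{N}^{1,0}$ are the $\tau$-terms. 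Your eigenbundle-preservation packaging and the contrast with $\widetilde{J}$ are both sound, so this is the same computation in different notation.
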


\begin{proof} Directly from (\ref{eq:DFrames}), (\ref{eq:SU3Splitting}), and (\ref{eq:NablaFrames}), one can check that 
\begin{align*}
\nabla^\perp(\widehat{J}e_3) - \widehat{J}(\nabla^\perp e_3) = D^\perp(\widehat{J}e_3) - \widehat{J}(D^\perp e_3) & = -2\beta_{32} \otimes e_5 - 2 \alpha_{32} \otimes e_6 \\
\nabla^\perp(\widehat{J}e_4) - \widehat{J}(\nabla^\perp e_4) = D^\perp(\widehat{J}e_4) - \widehat{J}(D^\perp e_4) & = 2\alpha_{32} \otimes e_5 - 2 \beta_{32} \otimes e_6 \\
\nabla^\perp(\widehat{J}e_5) - \widehat{J}(\nabla^\perp e_5) = D^\perp(\widehat{J}e_5) - \widehat{J}(D^\perp e_5) & = 2\beta_{32} \otimes e_3 - 2\alpha_{32} \otimes e_4 \\
\nabla^\perp(\widehat{J}e_6) - \widehat{J}(\nabla^\perp e_6) = D^\perp(\widehat{J}e_6) - \widehat{J}(D^\perp e_6) & = 2\alpha_{32} \otimes e_3 + 2 \beta_{32} \otimes e_4
\end{align*}
Noting that $u$ is null-torsion if and only if $\alpha_{32} = \beta_{32} = 0$ proves the claim.
\end{proof}

\indent We now complexify $N\Sigma$ and extend $\widehat{J}$ by $\mathbb{C}$-linearity.  We have a decomposition of $N\Sigma \otimes_{\mathbb{R}} \mathbb{C}$ into $\widehat{J}$-eigenbundles, say
$$N\Sigma \otimes_{\mathbb{R}} \mathbb{C} = \widehat{N}^{1,0} \oplus \widehat{N}^{0,1}$$
where, for example, $\widehat{N}^{1,0} = \{\xi \in N\Sigma \otimes_{\mathbb{R}} \mathbb{C} \colon \widehat{J}\xi = i\xi \} = \textstyle \{ \frac{1}{2}(\eta - i\widehat{J}\eta) \colon \eta \in N\Sigma \}$.
As complex vector bundles, we have isomorphisms $(N\Sigma, \widehat{J}) \simeq (\widehat{N}^{1,0}, i) \simeq L_N \oplus L_B^*$.
In terms of a local $T^2$-frame, these isomorphisms are simply:
\begin{align*}
e_3 & \mapsto f_2 \mapsto (f_2) \oplus 0 & e_5 & \mapsto \overline{f}_3 \mapsto 0 \oplus (\!(\overline{f}_3)\!) \\
e_4 & \mapsto if_2 \mapsto (if_2) \oplus 0 & e_6 & \mapsto -i\overline{f}_3 \mapsto 0 \oplus (\!(-i\overline{f}_3)\!).
\end{align*}

\indent We now extend both $\nabla^\perp$ and $D^\perp$ by $\mathbb{C}$-linearity.  In terms of a local $T^2$-frame, we compute
\begin{align}
\nabla^\perp f_2 & = \textstyle \gamma_{22} \otimes f_2 + \tau\zeta_1 \otimes f_3 - \frac{1}{2}\zeta_1 \otimes \overline{f}_3 & D^\perp f_2 & = \gamma_{22} \otimes f_2 + \tau\zeta_1 \otimes f_3 \label{eq:NHatHolo1} \\
\nabla^\perp \overline{f}_3 & = \textstyle \frac{1}{2}\overline{\zeta}_1 \otimes f_2 +  \tau\zeta_1 \otimes \overline{f}_2 - \gamma_{33} \otimes \overline{f}_3 & D^\perp \overline{f}_3 & = \tau\zeta_1 \otimes \overline{f}_2  - \gamma_{33} \otimes \overline{f}_3. \label{eq:NHatHolo2}
\end{align}
Thus, if $u$ null-torsion (i.e., $\tau = 0$), the $\mathbb{C}$-linear extensions of $\nabla^\perp$ and $D^\perp$ both give well-defined connections on $\widehat{N}^{1,0}$, meaning that we have covariant derivative operators
\begin{align*}
\nabla^\perp, D^\perp \colon \Gamma(\widehat{N}^{1,0}) \to \Omega^{1}(\Sigma; \mathbb{C}) \otimes \Gamma(\widehat{N}^{1,0}).
\end{align*}
Composing these with the projection $T\Sigma \otimes_{\mathbb{R}} \mathbb{C} \to T^{0,1}\Sigma$ gives the corresponding $\overline{\partial}$-operators:
\begin{align*}
\overline{\partial}^\nabla, \overline{\partial}^D \colon \Gamma(\widehat{N}^{1,0}) \to \Omega^{0,1}(\Sigma) \otimes \Gamma(\widehat{N}^{1,0}).
\end{align*}
Explicitly, continuing to assume that $u$ is null-torsion:
\begin{align}
\overline{\partial}^\nabla f_2 & = \textstyle (\gamma_{22})^{0,1} \otimes f_2 & \overline{\partial}^D f_2 & = (\gamma_{22})^{0,1} \otimes f_2  \label{eq:NHatHolo3} \\
\overline{\partial}^\nabla \overline{f}_3 & = \textstyle \frac{1}{2}\overline{\zeta}_1 \otimes f_2 -  (\gamma_{33})^{0,1} \otimes \overline{f}_3 & \overline{\partial}^D \overline{f}_3 & = - (\gamma_{33})^{0,1} \otimes \overline{f}_3. \label{eq:NHatHolo4}
\end{align}
\indent The upshot is that if $u$ is null-torsion, we have two different holomorphic structures to consider on the complex bundle $\widehat{N}^{1,0}$.  The corresponding spaces of holomorphic sections will be denoted
\begin{align*}
H^0(\widehat{N}^{1,0}; \nabla^\perp) & = \{\xi \in \Gamma(\widehat{N}^{1,0}) \colon \overline{\partial}^\nabla\xi = 0 \} & H^0(\widehat{N}^{1,0}; D^\perp) & = \{\xi \in \Gamma(\widehat{N}^{1,0}) \colon \overline{\partial}^D\xi = 0 \}.
\end{align*}
As we will see, it is the bundle $(\widehat{N}^{1,0}, \nabla^\perp)$ that arises in the study of the second variation of area.  On the other hand, the bundle $(\widehat{N}^{1,0}, D^\perp)$ has the desirable feature that it splits holomorphically:

\begin{prop} \label{thm:HatHoloSplitting} If $u \colon \Sigma^2 \to \mathbb{S}^6$ is a null-torsion holomorphic curve, then: \\
\indent (a) The inclusion $L_N \hookrightarrow (\widehat{N}^{1,0}, \nabla^\perp)$ is holomorphic. \\
\indent (b) There is a holomorphic splitting
$$(\widehat{N}^{1,0}, D^\perp) \cong L_N \oplus L_B^*.$$
\end{prop}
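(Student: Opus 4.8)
The plan is to read everything off the explicit local formulas (\ref{eq:NHatHolo3}) and (\ref{eq:NHatHolo4}), which describe $\overline{\partial}^\nabla$ and $\overline{\partial}^D$ in a $T^2$-adapted frame $(f_1, f_2, f_3)$ under the null-torsion hypothesis $\tau = 0$. In such a frame, $\widehat{N}^{1,0}$ is locally spanned by $f_2$ and $\overline{f}_3$, and the complex-bundle isomorphism $(\widehat{N}^{1,0}, i) \simeq L_N \oplus L_B^*$ of $\S$\ref{sec:J2} sends $f_2$ to the local frame of $L_N$ and $\overline{f}_3$ to the (unitary) dual frame of $L_B$. Since that isomorphism is already known to be frame-independent, the only new content is to verify that it intertwines the relevant holomorphic structures; this reduces to comparing $\overline{\partial}$-operators on local frames, so no further global argument is required.

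For part (a), I would observe from (\ref{eq:NHatHolo3}) that $\overline{\partial}^\nabla f_2 = (\gamma_{22})^{0,1} \otimes f_2$ again lies in $\operatorname{span}(f_2)$. Hence the line subbundle corresponding to $L_N = \operatorname{span}(f_2)$ is $\overline{\partial}^\nabla$-invariant, so the inclusion $L_N \hookrightarrow (\widehat{N}^{1,0}, \nabla^\perp)$ is holomorphic; moreover the structure induced on this subbundle is exactly that of $L_N$, whose Chern connection $D^{L_N}(f_2) = \gamma_{22} \otimes (f_2)$ has $(0,1)$-part $(\gamma_{22})^{0,1} \otimes (f_2)$. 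I would stress the contrast with the complementary line: by (\ref{eq:NHatHolo4}), $\overline{\partial}^\nabla \overline{f}_3$ carries the off-diagonal term $\tfrac{1}{2}\overline{\zeta}_1 \otimes f_2$ (precisely the difference tensor $S$ of the introduction), so $\operatorname{span}(\overline{f}_3)$ is not $\overline{\partial}^\nabla$-invariant and $(\widehat{N}^{1,0}, \nabla^\perp)$ does not split holomorphically. This is exactly why part (b) must be phrased using $D^\perp$.

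For part (b), the key observation is that (\ref{eq:NHatHolo3}) and (\ref{eq:NHatHolo4}) exhibit $\overline{\partial}^D$ as diagonal in the frame $(f_2, \overline{f}_3)$: one has $\overline{\partial}^D f_2 = (\gamma_{22})^{0,1} \otimes f_2$ and $\overline{\partial}^D \overline{f}_3 = -(\gamma_{33})^{0,1} \otimes \overline{f}_3$, with no cross terms. Thus $\operatorname{span}(f_2)$ and $\operatorname{span}(\overline{f}_3)$ are each $\overline{\partial}^D$-holomorphic subbundles, giving a holomorphic direct-sum decomposition of $(\widehat{N}^{1,0}, D^\perp)$. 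To name the summands I would compare Chern connections: the first carries connection $\gamma_{22}$, hence is $L_N$, while the second carries connection $-\gamma_{33}$, which is exactly the connection on $L_B^*$ dual to the Chern connection $D^{L_B}(\!(f_3)\!) = \gamma_{33} \otimes (\!(f_3)\!)$ on $L_B$. Under the Hermitian identification $\overline{L_B} \cong L_B^*$ applied to the unitary frame $(\!(f_3)\!)$, the frame $\overline{f}_3$ maps to the dual frame of $L_B$, so $\operatorname{span}(\overline{f}_3) \cong L_B^*$ holomorphically, and assembling the two summands gives $(\widehat{N}^{1,0}, D^\perp) \cong L_N \oplus L_B^*$.

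The one place deserving care is this last identification: I must confirm that the antilinear Hermitian isomorphism $\overline{L_B} \to L_B^*$ is holomorphic for the Koszul--Malgrange structures and that it sends $\overline{f}_3$ to the dual frame carrying the connection sign $-\gamma_{33}$ rather than $+\gamma_{33}$. Because $(f_1, f_2, f_3)$ is an $\operatorname{SU}(3)$-frame, so that $(\!(f_3)\!)$ is unitary and $\gamma_{33} = i\beta_{33} \in i\mathbb{R}$, this is a matter of convention-tracking using the fact that the dual Chern connection on a line bundle is the negative of the original; I expect no conceptual obstacle here, only care with signs. The rest of the proof is the immediate diagonalization argument above.
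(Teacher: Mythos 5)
Your proposal is correct and follows essentially the same route as the paper: the paper's proof likewise consists of reading off from the local formulas (\ref{eq:NHatHolo3})--(\ref{eq:NHatHolo4}) that $\overline{\partial}^\nabla$ preserves $\operatorname{span}(f_2)$ while $\overline{\partial}^D$ is diagonal in the frame $(f_2, \overline{f}_3)$, so that the inclusions of $L_N$ (for both structures) and of $L_B^*$ (for $D^\perp$) are holomorphic. Your extra care in verifying that $\operatorname{span}(\overline{f}_3)$ carries the dual connection $-\gamma_{33}$ and hence is $L_B^*$ holomorphically is a detail the paper leaves implicit (having already fixed the complex-bundle isomorphism $\widehat{N}^{1,0}\simeq L_N\oplus L_B^*$ in $\S$\ref{sec:J2}), but it does not change the argument.
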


\begin{proof} We have already seen that $\widehat{N}^{1,0} \simeq L_N \oplus L_B^*$ holds as complex vector bundles.  Equations (\ref{eq:NHatHolo3}) and (\ref{eq:NHatHolo4}) show that the inclusions
\begin{align*}
L_N & \hookrightarrow (\widehat{N}^{1,0}, \nabla^\perp) & L_N & \hookrightarrow (\widehat{N}^{1,0}, D^\perp) \\
& & L_B^* & \hookrightarrow (\widehat{N}^{1,0}, D^\perp)
\end{align*}
are all holomorphic.
\end{proof}

\indent To conclude this section, we use the holomorphic structure $\nabla^\perp$ on $\widehat{N}^{1,0}$ to define a notion of ``real-holomorphicity" for sections of $(N\Sigma, \widehat{J})$.  We say that $\eta \in \Gamma(N\Sigma)$ is $(\widehat{J}, \nabla^\perp)$-\textit{real holomorphic} if $\widehat{\eta}^{1,0} := \frac{1}{2}(\eta - i\widehat{J}\eta)$ is $\nabla^\perp$-holomorphic.  Defining the operator
\begin{align}
\widehat{\mathscr{D}} \colon \Gamma(N\Sigma) & \to \Omega^1(\Sigma) \otimes \Gamma(N\Sigma) \label{eq:RealHoloHat} \\
\widehat{\mathscr{D}}_X\eta & = \nabla^\perp_{JX}\eta - \nabla^\perp_X(\widehat{J}\eta) \nonumber
\end{align}
where $J := \widetilde{J}|_{T\Sigma}$ is the complex structure on $T\Sigma$, it is easy to check that for $Z = \frac{1}{2}(X + iJX) \in T^{0,1}\Sigma$, we have
\begin{align*}
\nabla^\perp_{Z}\widehat{\eta}^{1,0} = \textstyle \frac{i}{4}\left(\widehat{\mathscr{D}}_X\eta  + i\widehat{\mathscr{D}}_{JX}\eta \right)\!.
\end{align*}
Consequently, $\eta$ is $(\widehat{J}, \nabla^\perp)$-real holomorphic if and only if $\widehat{\mathscr{D}}_X\eta = 0$ for all $X \in T\Sigma$.  In other words,
\begin{align} \label{eq:RealHolo}
\{\eta \in \Gamma(N\Sigma) \colon \widehat{\mathscr{D}}\eta = 0\} & \cong H^0(\widehat{N}^{1,0}; \nabla^\perp)
\end{align}
as \textit{complex} vector spaces.

\begin{rmk} In the same way, one can speak of $(\widehat{J}, D^\perp)$-real holomorphicity, and define a corresponding operator $\widehat{\mathscr{D}}^D \colon \Gamma(N\Sigma) \to \Omega^1(\Sigma) \otimes \Gamma(N\Sigma)$.  However, we will not need this concept.
\end{rmk}

\section{The Second Variation of Area}

\indent \indent We now begin our study of the Jacobi operator of null-torsion holomorphic curves.  In $\S$4.1, we derive a second variation formula suited to the study of null-torsion holomorphic curves in $\mathbb{S}^6$, and in $\S$4.2 we use this formula to give a holomorphic interpretation of the $(-2)$-eigenspace of the Jacobi operator.  In $\S$4.3, we prove Theorem \ref{thm:M1}.

\subsection{A Second Variation Formula}

\indent \indent Let $u \colon \Sigma^2 \to \mathbb{S}^{2n}$ be a minimal surface in a round $2n$-sphere of constant curvature $1$, where $\Sigma$ is a compact oriented surface without boundary.  Let $u_t \colon \Sigma \to \mathbb{S}^{2n}$ be a variation of $F_0 = u$ with $\eta := \left.\frac{d}{dt}\right|_{t = 0} u_t$ a normal variation vector field.  As is well-known \cite[Chap I: $\S$9]{lawson80lectures}, the second variation of area is given by
$$\mathcal{Q}(\eta) := \left.\frac{d^2}{dt^2}\right|_{t = 0} \text{Area}(u_t) = \int_\Sigma \langle \mathcal{L}\eta, \eta \rangle$$
where the Jacobi operator $\mathcal{L} \colon \Gamma(N\Sigma) \to \Gamma(N\Sigma)$ is
$$\mathcal{L} = -\Delta^\perp - \mathcal{B} - \mathcal{R}$$
where, in a local orthonormal frame $(e_1, e_2)$ for $T\Sigma$, we have
\begin{align*}
\Delta^\perp\eta & =  \nabla^\perp_{e_i}\nabla^\perp_{e_i}\eta - \nabla^\perp_{\nabla^\top_{e_i}e_i}\eta & \mathcal{B}(\eta) & =  \left\langle \text{I\!I}(e_i, e_j), \eta \right\rangle \text{I\!I}(e_i, e_j) & \mathcal{R}(\eta) & =  (\overline{R}(\eta, e_i)e_i)^\perp.
\end{align*}
Note that $\Delta^\perp$ is simply the connection Laplacian for $\nabla^\perp$, the normal part of the Levi-Civita connection.  The spectrum of $-\Delta^\perp$ consists of non-negative real numbers. \\
\indent We now fix $\eta \in \Gamma(N\Sigma)$ and study the terms $\mathcal{R}(\eta)$, $\Delta^\perp\eta$, and $\mathcal{B}(\eta)$, in that order.  Let $\theta_\eta \in \Omega^1(\Sigma)$ denote the $1$-form
$$\theta_\eta(X) = \langle \nabla^\perp_X\eta, \eta \rangle.$$
Standard arguments show (see e.g., \cite{simons68}, \cite{ejiri83}, \cite{montiel-urbano}) that:

\begin{prop} \label{thm:RFormula} Let $u \colon \Sigma^2 \to \mathbb{S}^{2n}$ be an oriented minimal surface.  Then
\begin{equation} \label{eq:RFormula}
\mathcal{R}(\eta) = 2\eta.
\end{equation}
and
\begin{equation} \label{eq:Fact1}
-\langle \Delta^\perp\eta, \eta\rangle = \Vert \nabla^\perp \eta\Vert^2 + \mathrm{div}(\theta_\eta^\sharp).
\end{equation}
\end{prop}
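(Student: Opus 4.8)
The two claims are essentially standard computations, so the plan is to verify each identity by a direct calculation in a local orthonormal frame, exploiting the geometry of $\mathbb{S}^{2n}$ for the curvature term and an integration-by-parts identity for the Laplacian term.

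For the curvature identity (\ref{eq:RFormula}), the key fact is that $\mathbb{S}^{2n}$ has constant sectional curvature $1$, so its curvature tensor is $\overline{R}(X,Y)Z = \langle Y,Z\rangle X - \langle X,Z\rangle Y$. First I would substitute this into $\mathcal{R}(\eta) = (\overline{R}(\eta,e_i)e_i)^\perp$, summing over $i = 1,2$. Since $\eta \in N\Sigma$ is normal and $e_1, e_2$ are tangent, we have $\langle \eta, e_i\rangle = 0$, so each term $\overline{R}(\eta,e_i)e_i = \langle e_i, e_i\rangle\eta - \langle \eta, e_i\rangle e_i = \eta$. Summing over $i=1,2$ gives $2\eta$, which is already normal, so the projection is trivial. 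This is the easy part and requires only the constant-curvature formula.

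For the Laplacian identity (\ref{eq:Fact1}), the plan is to compute the divergence of the vector field $\theta_\eta^\sharp$ dual to the $1$-form $\theta_\eta(X) = \langle \nabla^\perp_X\eta, \eta\rangle$. Working in a local orthonormal frame $(e_1, e_2)$ that is normal at the point of evaluation (so $\nabla^\top_{e_i}e_i = 0$ there), I would expand $\mathrm{div}(\theta_\eta^\sharp) = \sum_i e_i\!\left(\langle \nabla^\perp_{e_i}\eta, \eta\rangle\right)$ and apply the Leibniz rule for the metric-compatible connection $\nabla^\perp$. This yields $\sum_i \langle \nabla^\perp_{e_i}\nabla^\perp_{e_i}\eta, \eta\rangle + \sum_i \langle \nabla^\perp_{e_i}\eta, \nabla^\perp_{e_i}\eta\rangle$, where the first sum is $\langle \Delta^\perp\eta, \eta\rangle$ (using that the frame is normal at the point) and the second is $\Vert \nabla^\perp\eta\Vert^2$. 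Rearranging gives $-\langle\Delta^\perp\eta,\eta\rangle = \Vert\nabla^\perp\eta\Vert^2 + \mathrm{div}(\theta_\eta^\sharp)$ as claimed.

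The main subtlety, and the only step requiring care, is the frame-dependence: the intermediate expressions for $\mathrm{div}(\theta_\eta^\sharp)$ and for $\Delta^\perp\eta$ both contain terms involving $\nabla^\top_{e_i}e_i$, and one must confirm that these cancel consistently. The cleanest way to handle this is to observe that both sides of (\ref{eq:Fact1}) are tensorial (frame-independent), so it suffices to verify the identity at an arbitrary point $p$ using a frame that is geodesic at $p$, i.e.\ satisfies $\nabla^\top_{e_i}e_j|_p = 0$. With such a frame the offending terms vanish at $p$ and the computation reduces to the Leibniz expansion above; since $p$ was arbitrary, the identity holds everywhere. I do not anticipate any genuine obstacle here, as this is the standard Bochner-type manipulation underlying the second variation formula.
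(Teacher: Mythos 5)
The paper does not actually prove Proposition \ref{thm:RFormula}; it delegates both identities to standard references, so your job is to supply the standard argument, and your overall plan is the right one. The curvature computation is correct and complete: constant curvature $1$ gives $\overline{R}(\eta,e_i)e_i = \langle e_i,e_i\rangle\eta - \langle\eta,e_i\rangle e_i = \eta$ for each $i$, and summing over a tangent frame of the surface yields $2\eta$, which is already normal.

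There is, however, a sign slip in the last step of your argument for (\ref{eq:Fact1}). With the convention you adopt, $\mathrm{div}(\theta_\eta^\sharp) = (\nabla_{e_i}\theta_\eta)(e_i)$, your (correct) Leibniz expansion gives
\begin{equation*}
\mathrm{div}(\theta_\eta^\sharp) = \langle \Delta^\perp\eta, \eta\rangle + \Vert\nabla^\perp\eta\Vert^2,
\end{equation*}
and rearranging this yields $-\langle\Delta^\perp\eta,\eta\rangle = \Vert\nabla^\perp\eta\Vert^2 - \mathrm{div}(\theta_\eta^\sharp)$, with a \emph{minus} sign on the divergence term; the rearrangement you claim, with a plus sign, is not valid algebra from your own intermediate identity. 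The stated form of (\ref{eq:Fact1}) is consistent with the paper's implicit convention that $\mathrm{div}(\alpha^\sharp)$ denotes the codifferential $\delta\alpha = -(\nabla_{e_i}\alpha)(e_i)$ (this is visible in the proof of Proposition \ref{thm:DPerpFormula}, where $-\delta\Theta_\eta$ in the last displayed line becomes $-\mathrm{div}(\Theta_\eta^\sharp)$ in the statement). So you should either adopt that convention explicitly or record the identity with the opposite sign. The discrepancy is ultimately harmless --- in every application (e.g.\ Corollary \ref{thm:SecondVar}) the term is integrated over the closed surface $\Sigma$ and vanishes by Stokes' theorem regardless of its sign --- but as written your final line does not follow from the line before it. Your remark about frame-dependence is fine: all three quantities in (\ref{eq:Fact1}) are frame-independent, so verifying the identity pointwise in a geodesic frame suffices.
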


\indent We now seek a formula for the term $\Vert \nabla^\perp \eta\Vert^2$.  For this, let $J$ denote the complex structure on $T\Sigma$ given by the metric and orientation, and equip $N\Sigma$ with a complex structure $I$ that is compatible with the metric.   Associated to $J$ and $I$, we let $\Theta_\eta \in \Omega^1(\Sigma)$ denote the $1$-form
\begin{align*}
\Theta_\eta(X) & = \langle \nabla^\perp_{JX}\eta, I\eta\rangle\!,
\end{align*}
and let $\mathscr{D} \colon \Gamma(N\Sigma) \to \Gamma(N\Sigma) \otimes \Omega^1(\Sigma)$ denote the operator
$$\mathscr{D}_X\eta = \nabla^\perp_{JX}\eta - \nabla^\perp_X(I\eta).$$
We now have:

\begin{prop} \label{thm:DPerpFormula} Let $u \colon \Sigma^2 \to \mathbb{S}^{2n}$ be an oriented minimal surface.  Let $I$ be any complex structure on $N\Sigma$ that is compatible with the metric.  If $\nabla^\perp I = 0$, then
\begin{equation} \label{eq:Fact2}
\Vert \nabla^\perp \eta \Vert^2 = \frac{1}{2} \Vert \mathscr{D}\eta \Vert^2 - \langle R^\perp_{12}\eta, I\eta \rangle - \mathrm{div}(\Theta_\eta^\sharp). \\
\end{equation}
\end{prop}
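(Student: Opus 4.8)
The plan is to prove this as a \emph{pointwise} identity: since every term appearing is tensorial in $\eta$, it suffices to verify the equation at an arbitrary point $p \in \Sigma$ using a local orthonormal frame $(e_1, e_2)$ that is geodesic at $p$ (so that $\nabla^\top_{e_i}e_j|_p = 0$ and $[e_1,e_2]|_p = 0$) and is oriented so that $Je_1 = e_2$ and $Je_2 = -e_1$. The strategy is to expand $\Vert \mathscr{D}\eta\Vert^2$ and $\mathrm{div}(\Theta_\eta^\sharp)$ separately in this frame and observe that the same cross term links them.

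First I would use the hypothesis $\nabla^\perp I = 0$ to rewrite $\mathscr{D}_X\eta = \nabla^\perp_{JX}\eta - \nabla^\perp_X(I\eta) = \nabla^\perp_{JX}\eta - I\nabla^\perp_X\eta$. Writing $A := \nabla^\perp_{e_1}\eta$ and $B := \nabla^\perp_{e_2}\eta$, this gives $\mathscr{D}_{e_1}\eta = B - IA$ and $\mathscr{D}_{e_2}\eta = -A - IB$. Expanding $\Vert\mathscr{D}\eta\Vert^2 = \Vert\mathscr{D}_{e_1}\eta\Vert^2 + \Vert\mathscr{D}_{e_2}\eta\Vert^2$ and invoking the two algebraic properties of a compatible complex structure, namely that $I$ is orthogonal ($\Vert IA\Vert = \Vert A\Vert$) and skew-adjoint ($\langle A, IB\rangle = -\langle B, IA\rangle$), the cross terms collapse to yield $\Vert\mathscr{D}\eta\Vert^2 = 2\Vert\nabla^\perp\eta\Vert^2 - 4\langle\nabla^\perp_{e_2}\eta, I\nabla^\perp_{e_1}\eta\rangle$, or equivalently $\Vert\nabla^\perp\eta\Vert^2 = \tfrac12\Vert\mathscr{D}\eta\Vert^2 + 2\langle\nabla^\perp_{e_2}\eta, I\nabla^\perp_{e_1}\eta\rangle$. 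This isolates the single cross term $\langle\nabla^\perp_{e_2}\eta, I\nabla^\perp_{e_1}\eta\rangle$ that remains to be interpreted.

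The second step identifies that cross term inside the divergence. From $\Theta_\eta(e_1) = \langle\nabla^\perp_{e_2}\eta, I\eta\rangle$ and $\Theta_\eta(e_2) = -\langle\nabla^\perp_{e_1}\eta, I\eta\rangle$, in the geodesic frame one has $\mathrm{div}(\Theta_\eta^\sharp)|_p = e_1\Theta_\eta(e_1) + e_2\Theta_\eta(e_2)$. Differentiating each inner product by the Leibniz rule, again using that $\nabla^\perp$ is metric and that $\nabla^\perp I = 0$, produces a second-derivative part and a first-derivative part. The second-derivative terms assemble into $\langle\nabla^\perp_{e_1}\nabla^\perp_{e_2}\eta - \nabla^\perp_{e_2}\nabla^\perp_{e_1}\eta, I\eta\rangle = \langle R^\perp_{12}\eta, I\eta\rangle$ (using $[e_1,e_2]|_p = 0$), while the first-derivative terms give $\langle\nabla^\perp_{e_2}\eta, I\nabla^\perp_{e_1}\eta\rangle - \langle\nabla^\perp_{e_1}\eta, I\nabla^\perp_{e_2}\eta\rangle = 2\langle\nabla^\perp_{e_2}\eta, I\nabla^\perp_{e_1}\eta\rangle$, once more by skew-adjointness of $I$. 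Solving for the cross term and substituting into the expression from the first step then yields the stated formula.

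I expect the only genuine delicacy to be sign bookkeeping: one must consistently use the orientation convention $Je_1 = e_2$, the skew-adjointness identity $\langle Iu, v\rangle = -\langle u, Iv\rangle$, and a fixed sign convention for both $R^\perp_{12}$ and for $\mathrm{div}$. In particular, the computation above naturally produces the Laplacian/curvature terms with the signs shown and an \emph{exact} (total-divergence) term; that divergence is written with a minus sign here precisely to match the divergence convention already fixed by (\ref{eq:Fact1}), and in any event it integrates to zero over the closed surface $\Sigma$, so it plays no role in the eventual application to the second variation $\mathcal{Q}(\eta)$. The geodesic-frame reduction guarantees that the resulting pointwise identity is independent of the frame, since each side of the equation is tensorial.
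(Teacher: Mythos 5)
Your proof is correct and follows essentially the same route as the paper's: expand $\Vert\mathscr{D}\eta\Vert^2$ using that $I$ is orthogonal, skew-adjoint, and $\nabla^\perp$-parallel, then identify the surviving cross term $\langle\nabla^\perp_{Je_i}\eta,\nabla^\perp_{e_i}(I\eta)\rangle$ as the curvature term plus the derivative of $\Theta_\eta$. The only differences are cosmetic --- you compute pointwise in a geodesic frame where the paper works invariantly with the codifferential (so the $\nabla^\perp_{[e_1,e_2]}\eta$ and $\Theta_\eta(\nabla^\top_{e_i}e_i)$ contributions cancel rather than vanish) --- and your remark about fixing the sign of $\mathrm{div}$ to match (\ref{eq:Fact1}) is consistent with the paper's own use of $\delta\Theta_\eta$ in its final line.
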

\begin{proof} We observe that
$$\Vert\mathscr{D}_X\eta\Vert^2 = \Vert \nabla^\perp_{JX}\eta \Vert^2 + \Vert \nabla^\perp_X(I\eta)\Vert^2 - 2 \left\langle \nabla^\perp_{JX}\eta, \nabla^\perp_{X}(I\eta) \right\rangle$$
Therefore, we have:
\begin{align*}
\Vert\mathscr{D}\eta \Vert^2 =  \left\Vert \mathscr{D}_{e_i}\eta\right\Vert^2 & =  \Vert \nabla^\perp_{Je_i}\eta \Vert^2 + \Vert \nabla^\perp_{e_i}(I\eta)\Vert^2 - 2\left\langle \nabla^\perp_{Je_i}\eta, \nabla^\perp_{e_i}(I\eta) \right \rangle \\
& = \Vert \nabla^\perp \eta \Vert^2 + \Vert \nabla^\perp (I\eta) \Vert^2 - 2  \left\langle \nabla^\perp_{Je_i}\eta, \nabla^\perp_{e_i}(I\eta) \right\rangle \\
& = 2\Vert \nabla^\perp \eta \Vert^2 - 2  \left\langle \nabla^\perp_{Je_i}\eta, \nabla^\perp_{e_i}(I\eta) \right\rangle
\end{align*}
using $\Vert \nabla^\perp(I\eta)\Vert = \Vert \nabla^\perp \eta\Vert$ in the last line.  Therefore,
$$\Vert \nabla^\perp \eta \Vert^2 = \frac{1}{2} \Vert \mathscr{D}\eta \Vert^2 +  \left\langle \nabla^\perp_{Je_i}\eta, \nabla^\perp_{e_i}(I\eta) \right\rangle\!.$$
To evaluate the last term, we compute
\begin{align*}
 \left\langle \nabla^\perp_{Je_i}\eta, \nabla^\perp_{e_i}(I\eta) \right\rangle & = - \left\langle \nabla^\perp_{e_i} \nabla^\perp_{Je_i}\eta, I\eta \right\rangle + e_i(\Theta_\eta(e_i)) \\
& = -\left\langle \nabla^\perp_{e_i} \nabla^\perp_{Je_i}\eta, I\eta \right\rangle + \Theta_\eta(\nabla^\top_{e_i}e_i) - \delta \Theta_\eta \\
& = -\langle R^\perp_{12}\eta, I\eta \rangle - \left\langle \nabla^\perp_{[e_1, e_2]}\eta, I\eta \right\rangle + \Theta_\eta(\nabla^\top_{e_i}e_i) - \delta \Theta_\eta \\
& = -\langle R^\perp_{12}\eta, I\eta \rangle  - \delta \Theta_\eta
\end{align*}
where $\delta$ is the codifferential, and in the last line we used that $\nabla^\top$ is torsion-free and commutes with $J$.  Thus, we have shown that
$$\Vert \nabla^\perp \eta \Vert^2 = \frac{1}{2} \Vert \mathscr{D}\eta \Vert^2 - \langle R^\perp_{12}\eta, I\eta \rangle - \delta \Theta_\eta$$
This gives the result.
\end{proof}

\indent Finally, we need a formula for $\mathcal{B}(\eta)$.  For this, we specialize to the case of holomorphic curves in $\mathbb{S}^6$ and recall the complex structure $\widehat{J}$ on $N\Sigma$ defined in $\S$\ref{sec:J2}.

\begin{prop} \label{thm:BFormula} Let $u \colon \Sigma^2 \to \mathbb{S}^6$ be a holomorphic curve.  Then
\begin{equation} \label{eq:BFormula1}
\mathcal{B}(\eta) = \widehat{J} R^\perp_{12}\eta
\end{equation}
so that
\begin{equation}
\label{eq:BFormula2}
\langle\mathcal{B}(\eta),\eta \rangle = -\langle R^\perp_{12}\eta, \widehat{J}\eta \rangle.
\end{equation}
\end{prop}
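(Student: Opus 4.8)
The plan is to prove the tensorial identity (\ref{eq:BFormula1}) first; the scalar identity (\ref{eq:BFormula2}) is then immediate, since $\widehat{J}$ is a metric-compatible complex structure, hence skew-adjoint, so that $\langle \widehat{J}R^\perp_{12}\eta, \eta\rangle = -\langle R^\perp_{12}\eta, \widehat{J}\eta\rangle$. Because $\mathcal{B}$, $R^\perp_{12}$, and $\widehat{J}$ are all independent of the choice of (oriented) frame, I would verify (\ref{eq:BFormula1}) pointwise in a $T^2$-adapted frame $(e_1, \ldots, e_6)$, where $\mu = 0$. Abbreviating $A := \text{I\!I}(e_1, e_1)$ and $B := \text{I\!I}(e_1, e_2)$, the formulas (\ref{eq:SecondFundSym}) then reduce to
\begin{align*}
A &= \kappa_1 e_3 + \kappa_2 e_4, & B &= -\kappa_2 e_3 + \kappa_1 e_4,
\end{align*}
so that both $A$ and $B$ lie in the first normal space $E_N = \text{span}(e_3, e_4)$.

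The argument has three computational steps, which I expect to be routine. First, I would rewrite $\mathcal{B}$: invoking minimality in the form $\text{I\!I}(e_2, e_2) = -A$, the defining sum $\mathcal{B}(\eta) = \langle \text{I\!I}(e_i, e_j), \eta\rangle\, \text{I\!I}(e_i, e_j)$ collapses to
$$\mathcal{B}(\eta) = 2\langle A, \eta\rangle A + 2\langle B, \eta\rangle B.$$
Second, I would rewrite the normal curvature: substituting the coefficients $h^\alpha_{11} = \langle A, e_\alpha\rangle$ and $h^\alpha_{12} = \langle B, e_\alpha\rangle$ into the Ricci equation (\ref{eq:RicciEq}) and summing over the frame yields the frame-free expression
$$R^\perp_{12}\eta = 2\langle B, \eta\rangle A - 2\langle A, \eta\rangle B.$$

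Third, I would connect the two expressions via $\widehat{J}$. Since $\widehat{J}$ agrees with $\widetilde{J}$ on $E_N$, and $A, B \in E_N$, the relations $\widehat{J}e_3 = e_4$ and $\widehat{J}e_4 = -e_3$ give at once
\begin{align*}
\widehat{J}A &= B, & \widehat{J}B &= -A.
\end{align*}
Applying $\widehat{J}$ to the curvature formula and using these relations then produces $\widehat{J}R^\perp_{12}\eta = 2\langle B, \eta\rangle B + 2\langle A, \eta\rangle A$, which is exactly the expression for $\mathcal{B}(\eta)$ from the first step, establishing (\ref{eq:BFormula1}).

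The only place where the specific geometry of holomorphic curves genuinely enters is the pair of relations $\widehat{J}A = B$, $\widehat{J}B = -A$, and this is the step I would watch most carefully. It relies both on the reduced form of (\ref{eq:SecondFundSym}) in a $T^2$-adapted frame and on the fact that $A$ and $B$ lie in $E_N$, where the auxiliary complex structure $\widehat{J}$ coincides with $\widetilde{J}$; in particular, the sign flip built into $\widehat{J}$ on $E_B$ plays no role here, so one could equally write $\widetilde{J}A = B$. Everything else amounts to bookkeeping with (\ref{eq:SecondFundSym}) and (\ref{eq:RicciEq}).
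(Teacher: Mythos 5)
Your proposal is correct and follows essentially the same route as the paper: both work pointwise in a $T^2$-adapted frame, use the reduced form of (\ref{eq:SecondFundSym}) with $\mu = 0$, and feed the coefficients into the Ricci equation (\ref{eq:RicciEq}) before applying $\widehat{J}$. The only cosmetic difference is that you phrase the computation in terms of $A = \text{I\!I}(e_1,e_1)$ and $B = \text{I\!I}(e_1,e_2)$ rather than evaluating $\mathcal{B}$ and $R^\perp_{12}$ on the basis vectors $e_3,\ldots,e_6$ as the paper does; the content is identical.
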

\begin{proof} Let $(e_1, \ldots, e_6)$ be a $T^2$-adapted frame.  By (\ref{eq:SecondFundSym}) and the fact that $T^2$-adapted frames have $\mu = 0$, we have
\begin{align*}
\text{I\!I}(e_1, e_1) & = \kappa_1 e_3 + \kappa_2 e_4 \\
\text{I\!I}(e_1, e_2) & = -\kappa_2 e_3 + \kappa_1 e_4.
\end{align*}
It follows that
\begin{align*}
\mathcal{B}(e_3) & = 2|\kappa|^2 e_3 & \mathcal{B}(e_5) & = 0 \\
\mathcal{B}(e_4) & = 2|\kappa|^2 e_4 & \mathcal{B}(e_6) & = 0.
\end{align*}
On the other hand, the Ricci equation (\ref{eq:RicciEq}) implies that
\begin{align}
R^\perp_{12}(e_3) & = -2|\kappa|^2 \widehat{J}e_3 & R^\perp_{12}(e_5) & = 0  \label{eq:RPerp1} \\
R^\perp_{12}(e_4) & = -2|\kappa|^2 \widehat{J}e_4 & R^\perp_{12}(e_6) & = 0. \label{eq:RPerp2}
\end{align}
Therefore, $\mathcal{B}(\eta) = \widehat{J} R^\perp_{12}\eta$, so that $\langle \mathcal{B}(\eta), \eta \rangle = \langle \widehat{J}R^\perp_{12}\eta, \eta \rangle = -\langle R^\perp_{12}\eta, \widehat{J}\eta \rangle$.
\end{proof}

\indent We now intend to combine Propositions \ref{thm:RFormula}, \ref{thm:DPerpFormula}, and \ref{thm:BFormula} to arrive at a second variation formula for holomorphic curves in $\mathbb{S}^6$.  To do this, notice that Proposition \ref{thm:DPerpFormula} required the choice of a complex structure $I$ on $N\Sigma$ satisfying $\nabla^\perp I = 0$.  In $\S$\ref{sec:StructuresOnNormal}, we observed that $\widehat{J}$ satisfies $\nabla^\perp\widehat{J} = 0$ if and only if $u$ is null-torsion (whereas $\widetilde{J}$ never has this property).  Therefore, restricting to the null-torsion case and taking $I := \widehat{J}$ in Proposition \ref{thm:DPerpFormula}, we deduce:

\begin{cor} \label{thm:SecondVar} Let $u \colon \Sigma^2 \to \mathbb{S}^6$ be a null-torsion holomorphic curve, where $\Sigma$ is a closed surface.  For $\eta \in \Gamma(N\Sigma)$, we have:
\begin{equation}
\mathcal{Q}(\eta) = \int_\Sigma \frac{1}{2}\Vert \widehat{\mathscr{D}}\eta \Vert^2 - 2 \Vert \eta \Vert^2 \label{eq:SecondVar}
\end{equation}
recalling from (\ref{eq:RealHoloHat}) that $\widehat{\mathscr{D}}_X\eta := \nabla^\perp_{JX}\eta - \nabla^\perp_X(\widehat{J}\eta)$.
\end{cor}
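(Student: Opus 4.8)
The plan is simply to assemble the three preceding propositions, substitute them into the definition of the Jacobi quadratic form, and watch the normal-curvature terms cancel. Writing $\mathcal{L} = -\Delta^\perp - \mathcal{B} - \mathcal{R}$, I would first record that
$$\langle \mathcal{L}\eta, \eta\rangle = -\langle \Delta^\perp\eta, \eta\rangle - \langle \mathcal{B}(\eta), \eta\rangle - \langle \mathcal{R}(\eta), \eta\rangle.$$
Proposition \ref{thm:RFormula} disposes of two of these terms at once: the identity $\mathcal{R}(\eta) = 2\eta$ gives $\langle \mathcal{R}(\eta), \eta\rangle = 2\Vert \eta\Vert^2$, while (\ref{eq:Fact1}) rewrites the Laplacian term as $-\langle \Delta^\perp\eta, \eta\rangle = \Vert \nabla^\perp\eta\Vert^2 + \mathrm{div}(\theta_\eta^\sharp)$.

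Next I would invoke Proposition \ref{thm:DPerpFormula} to expand $\Vert \nabla^\perp\eta\Vert^2$. The crucial point is that, since $u$ is null-torsion, the discussion in $\S$\ref{sec:StructuresOnNormal} guarantees $\nabla^\perp\widehat{J} = 0$, so $I := \widehat{J}$ is a legitimate choice of metric-compatible, $\nabla^\perp$-parallel complex structure. With this choice the operator $\mathscr{D}$ of Proposition \ref{thm:DPerpFormula} coincides with the operator $\widehat{\mathscr{D}}$ of (\ref{eq:RealHoloHat}), and (\ref{eq:Fact2}) reads
$$\Vert \nabla^\perp\eta\Vert^2 = \tfrac{1}{2}\Vert \widehat{\mathscr{D}}\eta\Vert^2 - \langle R^\perp_{12}\eta, \widehat{J}\eta\rangle - \mathrm{div}(\Theta_\eta^\sharp).$$

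Substituting both expansions into the formula for $\langle \mathcal{L}\eta, \eta\rangle$, and then replacing $\langle \mathcal{B}(\eta), \eta\rangle$ by $-\langle R^\perp_{12}\eta, \widehat{J}\eta\rangle$ from (\ref{eq:BFormula2}), the two copies of $\langle R^\perp_{12}\eta, \widehat{J}\eta\rangle$ cancel, leaving
$$\langle \mathcal{L}\eta, \eta\rangle = \tfrac{1}{2}\Vert \widehat{\mathscr{D}}\eta\Vert^2 - 2\Vert \eta\Vert^2 + \mathrm{div}(\theta_\eta^\sharp) - \mathrm{div}(\Theta_\eta^\sharp).$$
Finally I would integrate over the closed surface $\Sigma$; by the divergence theorem the two divergence terms integrate to zero, yielding exactly (\ref{eq:SecondVar}).

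The computation is routine bookkeeping, so there is no real analytic obstacle. The one substantive point — and the reason this clean formula is special to the null-torsion setting — is the exact cancellation of the curvature terms $\langle R^\perp_{12}\eta, \widehat{J}\eta\rangle$. This works only because $\widehat{J}$ is simultaneously the parallel complex structure needed to apply Proposition \ref{thm:DPerpFormula} and the complex structure appearing in the shape-operator identity $\mathcal{B}(\eta) = \widehat{J}R^\perp_{12}\eta$ of Proposition \ref{thm:BFormula}; both facts rest on the vanishing of the torsion. I would therefore be careful to note explicitly that the choice $I = \widehat{J}$ is permissible precisely because $\nabla^\perp\widehat{J} = 0$, which is where the null-torsion hypothesis enters.
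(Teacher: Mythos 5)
Your proof is correct and follows exactly the same route as the paper: substitute Propositions \ref{thm:RFormula}, \ref{thm:DPerpFormula} (with $I = \widehat{J}$, legitimate since $\nabla^\perp\widehat{J} = 0$ in the null-torsion case), and \ref{thm:BFormula} into $\langle \mathcal{L}\eta, \eta\rangle$, cancel the two curvature terms, and integrate away the divergences by Stokes' Theorem. Your closing remark correctly identifies where the null-torsion hypothesis enters.
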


\begin{proof} Using (\ref{eq:RFormula}) and (\ref{eq:BFormula2}), we have
\begin{align*}
\langle \mathcal{L}\eta, \eta \rangle & = -\langle \Delta^\perp \eta, \eta \rangle - \langle \mathcal{B}(\eta), \eta \rangle - \langle \mathcal{R}(\eta), \eta \rangle \\
& = -\langle \Delta^\perp \eta, \eta \rangle + \langle R^\perp_{12} \eta, \widehat{J}\eta \rangle - 2 \Vert \eta \Vert^2.
\end{align*}
Next, using (\ref{eq:Fact1}) and (\ref{eq:Fact2}) with the choice $I = \widehat{J}$, we have
\begin{align*}
\langle \mathcal{L}\eta, \eta \rangle & = \frac{1}{2}\Vert \widehat{\mathscr{D}}\eta \Vert^2 - \langle R^\perp_{12}\eta, \widehat{J}\eta \rangle - \text{div}(\Theta^\sharp_\eta) + \text{div}(\theta^\sharp_\eta) + \langle R^\perp_{12} \eta, \widehat{J}\eta \rangle - 2 \Vert \eta \Vert^2 \\
& = \frac{1}{2}\Vert \widehat{\mathscr{D}}\eta \Vert^2 - \text{div}(\Theta^\sharp_\eta) + \text{div}(\theta^\sharp_\eta) - 2 \Vert \eta \Vert^2.
\end{align*}
Integrating both sides and using Stokes' Theorem completes the proof.
\end{proof}

\indent Analogues of the second variation formula (\ref{eq:SecondVar}) have been observed in several other contexts.  For example, a version of (\ref{eq:SecondVar}) was obtained by Simons \cite[p. 78]{simons68} for complex submanifolds of K\"{a}hler manifolds, by Ejiri \cite[Lemma 3.2]{ejiri83} for minimal $2$-spheres in $\mathbb{S}^{2n}$, by Micallef-Wolfson \cite[p. 264]{micallef-wolfson} for minimal Lagrangians in negative K\"{a}hler-Einstein $4$-manifolds, and by Montiel-Urbano \cite[p. 2259]{montiel-urbano} for superminimal surfaces in self-dual Einstein $4$-manifolds.

\subsection{The First Eigenvalue}

\indent \indent Let $u \colon \Sigma^2 \to \mathbb{S}^6$ be a null-torsion holomorphic curve.  Let $\eta \in \Gamma(N\Sigma)$ be an eigenvector for $\mathcal{L}$, so that $\mathcal{L}\eta = \lambda \eta$ for some $\lambda \in \mathbb{R}$.  Then $\langle \mathcal{L}\eta, \eta \rangle = \lambda \Vert \eta \Vert^2$, so Corollary \ref{thm:SecondVar} gives
$$\int_\Sigma \frac{1}{2}\Vert \widehat{\mathscr{D}}\eta \Vert^2 - 2 \Vert \eta \Vert^2 = \mathcal{Q}(\eta) = \int_\Sigma \lambda \Vert \eta \Vert^2.$$
Rearranging, we obtain
$$\int_\Sigma \frac{1}{2}\Vert \widehat{\mathscr{D}}\eta \Vert^2 - (\lambda + 2) \Vert \eta \Vert^2 = 0.$$
Since $\Vert \widehat{\mathscr{D}}\eta \Vert^2 \geq 0$, we deduce that $\lambda \geq -2$.  Moreover, we see that $\lambda = -2$ if and only if $\eta \in \Gamma(N\Sigma)$ satisfies $\widehat{\mathscr{D}}\eta = 0$, i.e., if and only if $\eta$ is $(\widehat{J}, \nabla^\perp)$-real holomorphic.  Recalling (\ref{eq:RealHolo}), this proves:

\begin{prop} \label{thm:LowestEigen} The lowest eigenvalue of the Jacobi operator satisfies $\lambda_1 \geq -2$.  The multiplicity $m_1$ of the eigenvalue $\lambda = -2$ is
$$m_1 = \dim_{\mathbb{R}}\{ \eta \in \Gamma(N\Sigma) \colon \widehat{\mathscr{D}}\eta = 0\} = 2\,h^0(\widehat{N}^{1,0}; \nabla^\perp).$$
\end{prop}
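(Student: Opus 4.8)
The plan is to read both assertions off the second variation formula of Corollary~\ref{thm:SecondVar}, which already expresses $\mathcal{Q}$ in a manifestly near-positive form. First I would observe that (\ref{eq:SecondVar}) holds for \emph{all} $\eta \in \Gamma(N\Sigma)$, not merely eigenvectors, and that since $\Vert\widehat{\mathscr{D}}\eta\Vert^2 \geq 0$ it gives the integrated bound
$$\mathcal{Q}(\eta) = \int_\Sigma \tfrac12\Vert\widehat{\mathscr{D}}\eta\Vert^2 - 2\Vert\eta\Vert^2 \;\geq\; -2\int_\Sigma\Vert\eta\Vert^2.$$
Because $\mathcal{L}$ is a self-adjoint elliptic operator, its lowest eigenvalue is the infimum of the Rayleigh quotient $\mathcal{Q}(\eta)/\Vert\eta\Vert_{L^2}^2$; the displayed inequality therefore gives $\lambda_1 \geq -2$ at once, proving the first claim.

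For the multiplicity, the key is that the bottom eigenspace of a self-adjoint operator is exactly the set of $\eta$ realizing equality in the Rayleigh bound. From the formula, $\mathcal{Q}(\eta) = -2\int_\Sigma\Vert\eta\Vert^2$ holds if and only if $\int_\Sigma\Vert\widehat{\mathscr{D}}\eta\Vert^2 = 0$, which (as the integrand is continuous and nonnegative) is equivalent to $\widehat{\mathscr{D}}\eta \equiv 0$. Hence the $(-2)$-eigenspace of $\mathcal{L}$ coincides with $\ker\widehat{\mathscr{D}} := \{\eta \in \Gamma(N\Sigma) \colon \widehat{\mathscr{D}}\eta = 0\}$, so $m_1 = \dim_{\mathbb{R}}\ker\widehat{\mathscr{D}}$.

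It remains to convert this real dimension into the holomorphic count $2\,h^0(\widehat{N}^{1,0};\nabla^\perp)$. Here I would invoke the identification (\ref{eq:RealHolo}), which matches $\ker\widehat{\mathscr{D}}$ with $H^0(\widehat{N}^{1,0};\nabla^\perp)$ \emph{as complex vector spaces} --- the complex structure on the left being induced by $\widehat{J}$, which preserves $\ker\widehat{\mathscr{D}}$ precisely because $\nabla^\perp\widehat{J} = 0$ in the null-torsion case. Passing to real dimensions then yields $\dim_{\mathbb{R}}\ker\widehat{\mathscr{D}} = 2\,\dim_{\mathbb{C}}H^0(\widehat{N}^{1,0};\nabla^\perp) = 2\,h^0(\widehat{N}^{1,0};\nabla^\perp)$.

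In truth the heavy lifting for this proposition was done earlier, in the derivation of Corollary~\ref{thm:SecondVar} and the isomorphism (\ref{eq:RealHolo}); what remains is a short deduction. The one point I would be careful to justify rather than assert is that the full kernel $\ker\widehat{\mathscr{D}}$ --- and not merely the eigenvectors lying in it --- constitutes the entire $(-2)$-eigenspace, which rests on the variational characterization of the bottom of the spectrum of the self-adjoint elliptic operator $\mathcal{L}$; the accompanying factor of $2$ in $m_1$ is then simply the real-versus-complex dimension bookkeeping forced by the $\widehat{J}$-complex structure on $\ker\widehat{\mathscr{D}}$.
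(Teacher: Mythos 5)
Your proposal is correct and follows essentially the same route as the paper: apply the second variation formula of Corollary~\ref{thm:SecondVar}, use positivity of $\Vert\widehat{\mathscr{D}}\eta\Vert^2$ to get $\lambda_1 \geq -2$ and to identify the $(-2)$-eigenspace with $\ker\widehat{\mathscr{D}}$, then invoke (\ref{eq:RealHolo}) for the factor of $2$. If anything, you are slightly more careful than the paper, which only tests the formula on eigenvectors and leaves implicit the variational argument showing that \emph{every} element of $\ker\widehat{\mathscr{D}}$ lies in the $(-2)$-eigenspace --- the point you rightly single out as needing justification.
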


\indent We now use the Riemann-Roch Theorem to derive a lower bound for $m_1$.  This lower bound will show, in particular, that $m_1 \geq 4$, so that the lowest eigenvalue of $\mathcal{L}$ is always $\lambda_1 = -2$.

\begin{prop} \label{thm:LowerBound} The multiplicity $m_1$ satisfies
$$m_1 = 4d + 2h^1(\widehat{N}^{1,0}; \nabla^\perp) \geq 4d.$$
\end{prop}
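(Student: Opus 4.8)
The plan is to combine the identification $m_1 = 2\,h^0(\widehat{N}^{1,0}; \nabla^\perp)$ furnished by Proposition \ref{thm:LowestEigen} with the Riemann-Roch Theorem applied to the rank-$2$ holomorphic bundle $(\widehat{N}^{1,0}, \nabla^\perp)$. Since Riemann-Roch expresses the holomorphic Euler characteristic $h^0 - h^1$ purely in terms of the degree (a topological invariant) and the rank, the whole computation reduces to determining the first Chern class $c_1(\widehat{N}^{1,0})$ and bookkeeping the genus-dependent terms.

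First I would compute this Chern class. The essential observation is that, although the holomorphic structure $\overline{\partial}^\nabla$ does \emph{not} respect the smooth splitting $\widehat{N}^{1,0} \cong L_N \oplus L_B^*$ (only $\overline{\partial}^D$ does, by Proposition \ref{thm:HatHoloSplitting}(b)), the first Chern class is insensitive to the choice of holomorphic structure and may therefore be read off from the underlying \emph{smooth} splitting. Invoking Proposition \ref{thm:c1LineBundles}, this gives
\begin{equation*}
c_1(\widehat{N}^{1,0}) = c_1(L_N) + c_1(L_B^*) = c_1(L_N) - c_1(L_B) = (d - \chi(\Sigma)) + d = 2d - \chi(\Sigma).
\end{equation*}

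Next I would apply Riemann-Roch for a rank-$r$ holomorphic vector bundle $E$ over a genus-$g$ Riemann surface, namely $h^0(E) - h^1(E) = c_1(E) + r(1 - g)$. Taking $E = (\widehat{N}^{1,0}, \nabla^\perp)$ with $r = 2$, substituting the Chern class above, and using the identity $2(1 - g) = \chi(\Sigma)$, the $\chi(\Sigma)$-terms cancel:
\begin{equation*}
h^0(\widehat{N}^{1,0}; \nabla^\perp) - h^1(\widehat{N}^{1,0}; \nabla^\perp) = (2d - \chi(\Sigma)) + \chi(\Sigma) = 2d.
\end{equation*}
Substituting into $m_1 = 2\,h^0(\widehat{N}^{1,0}; \nabla^\perp)$ then yields $m_1 = 4d + 2\,h^1(\widehat{N}^{1,0}; \nabla^\perp) \geq 4d$, since $h^1 \geq 0$.

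The only genuinely subtle point — and the one I would state explicitly — is that $\overline{\partial}^\nabla$ defines a \emph{non-split} holomorphic structure, so one cannot naively compute $h^0$ as $h^0(L_N) + h^0(L_B^*)$; the space of holomorphic sections really does depend on the extension. What rescues the argument is that the \emph{difference} $h^0 - h^1$ is the holomorphic Euler characteristic, which Riemann-Roch expresses through the topological degree alone, so no knowledge of the extension class is required to extract the lower bound. Promoting this bound to an equality (Theorem \ref{thm:M1}) is exactly where the finer decoupling analysis of $\S$4.3 must enter, namely to control $h^1$.
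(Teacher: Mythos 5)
Your proposal is correct and follows essentially the same route as the paper: apply Riemann--Roch for rank-$2$ bundles to $(\widehat{N}^{1,0}, \nabla^\perp)$, compute $\deg(\widehat{N}^{1,0}) = c_1(L_N) - c_1(L_B) = 2d - \chi(\Sigma)$ from the smooth splitting and Proposition \ref{thm:c1LineBundles}, and conclude $h^0 - h^1 = 2d$. Your explicit remark that the first Chern class is insensitive to the choice of holomorphic structure, so the smooth splitting suffices even though $\overline{\partial}^\nabla$ does not split, is exactly the point the paper uses implicitly.
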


\begin{proof} By the Riemann-Roch Theorem for rank 2 vector bundles, the isomorphism $\widehat{N}^{1,0} \simeq L_N \oplus L_B^*$ of complex vector bundles, and Proposition \ref{thm:c1LineBundles}, we have:
\begin{align*}
h^0(\widehat{N}^{1,0}; \nabla^\perp) - h^1(\widehat{N}^{1,0}; \nabla^\perp) & = \deg(\widehat{N}^{1,0}) + \chi(\Sigma) \\
& = c_1(L_N) - c_1(L_B) + \chi(\Sigma) \\
& = -\chi(\Sigma) + d + d + \chi(\Sigma) \\
& = 2d.
\end{align*}
The result follows.
\end{proof}

\subsection{The Multiplicity of the First Eigenvalue: Proof of Theorem 1.1}

\indent \indent In Proposition \ref{thm:LowerBound}, we obtained a lower bound for $m_1$.  Our aim is to prove the following upper bound for $m_1$.

\begin{prop} \label{thm:UpperBound} Let $u \colon \Sigma^2 \to \mathbb{S}^6$ be a null-torsion holomorphic curve.  Then 
$$h^0(\widehat{N}^{1,0}; \nabla^\perp) \leq h^0(L_N) + h^0(L_B^*).$$
\end{prop}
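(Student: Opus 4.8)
The plan is to exploit the holomorphic splitting $(\widehat{N}^{1,0}, D^\perp) \cong L_N \oplus L_B^*$ from Proposition \ref{thm:HatHoloSplitting}(b) and to compare the two holomorphic structures $\overline{\partial}^\nabla$ and $\overline{\partial}^D$ via their difference $S := \overline{\partial}^\nabla - \overline{\partial}^D$, regarded as a bundle map $\widehat{N}^{1,0} \to \Omega^{0,1}(\Sigma) \otimes \widehat{N}^{1,0}$. Comparing the local formulas (\ref{eq:NHatHolo3}) and (\ref{eq:NHatHolo4}), I read off that $S$ annihilates the $L_N$ summand (since $\overline{\partial}^\nabla f_2 = \overline{\partial}^D f_2$) and sends $\overline{f}_3 \mapsto \tfrac{1}{2}\overline{\zeta}_1 \otimes f_2$. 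In other words, $S$ is strictly \emph{triangular} with respect to the $D^\perp$-splitting: it carries the $L_B^*$ factor into $\Omega^{0,1}(\Sigma) \otimes L_N$ and kills the $L_N$ factor. This triangularity is the entire mechanism behind the bound.

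First I would write an arbitrary section $\xi \in \Gamma(\widehat{N}^{1,0})$ as $\xi = \xi_N + \xi_B$ with $\xi_N \in \Gamma(L_N)$ and $\xi_B \in \Gamma(L_B^*)$, using the holomorphic $D^\perp$-splitting. Writing $\overline{\partial}^\nabla \xi = \overline{\partial}^D\xi + S\xi$ and decomposing into $L_N$- and $L_B^*$-components, the equation $\overline{\partial}^\nabla \xi = 0$ becomes the pair
\begin{align*}
\overline{\partial}^D \xi_B &= 0, \\
\overline{\partial}^D \xi_N &= -S\xi_B.
\end{align*}
The first equation is self-contained and states that $\xi_B \in H^0(L_B^*)$; the second is an inhomogeneous $\overline{\partial}$-equation for $\xi_N$ whose source is determined by $\xi_B$. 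This is precisely the decoupling anticipated in the introduction.

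Next I would introduce the linear projection $\pi_B \colon H^0(\widehat{N}^{1,0}; \nabla^\perp) \to H^0(L_B^*)$ sending $\xi \mapsto \xi_B$, which is well-defined by the first equation above. Its kernel consists of $\nabla$-holomorphic sections $\xi$ with $\xi_B = 0$, i.e., sections $\xi = \xi_N \in \Gamma(L_N)$ with $\overline{\partial}^\nabla \xi_N = 0$; since the inclusion $L_N \hookrightarrow (\widehat{N}^{1,0}, \nabla^\perp)$ is holomorphic by Proposition \ref{thm:HatHoloSplitting}(a), this kernel is exactly $H^0(L_N)$. The rank-nullity theorem then yields
$$h^0(\widehat{N}^{1,0}; \nabla^\perp) = \dim \ker \pi_B + \dim \operatorname{im} \pi_B \leq h^0(L_N) + h^0(L_B^*),$$
which is the asserted bound.

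The only point requiring care---and the reason the conclusion is an inequality rather than an equality---is that $\pi_B$ need not be surjective: for a prescribed $\xi_B \in H^0(L_B^*)$, the inhomogeneous equation $\overline{\partial}^D \xi_N = -S\xi_B$ may have no solution, since solvability is obstructed by the cokernel $H^1(L_N)$. Upgrading this to an equality would require analyzing that obstruction, but for the upper bound it suffices to discard the image's deficiency. Thus the substantive content is entirely the triangular structure of $S$, after which rank-nullity closes the argument.
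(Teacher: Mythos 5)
Your proposal is correct and follows essentially the same route as the paper: the same difference tensor $S = \overline{\partial}^\nabla - \overline{\partial}^D$, the same observation that $S$ kills $L_N$ and maps $L_B^*$ into $\Lambda^{0,1}\Sigma \otimes L_N$, and the same decoupled system via the holomorphic splitting of $(\widehat{N}^{1,0}, D^\perp)$. Your only (cosmetic, and if anything slightly cleaner) departure is packaging the final count as rank-nullity for the projection $\pi_B$ onto $H^0(L_B^*)$, where the paper instead describes the solution set as a bundle of affine spaces of dimension $h^0(L_N)$ over the subset of $H^0(L_B^*)$ for which the inhomogeneous equation is solvable.
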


Accepting this proposition on faith for a moment, we now show how it implies Theorem 1.1. \\

\begin{proof} Let $u \colon \Sigma^2 \to \mathbb{S}^6$ be a null-torsion holomorphic curve satisfying $g < \frac{1}{2}(d+2)$, or equivalently, $d > 2g - 2$.  Using Proposition \ref{thm:LowestEigen}, followed by Proposition \ref{thm:UpperBound}, and finally Proposition \ref{thm:HoloSections}(a) and (b), we have the upper bound
\begin{align*}
m_1 = 2h^0(\widehat{N}^{1,0}; \nabla^\perp) & \leq 2 \left(h^0(L_N) + h^0(L_B^*) \right) = 2 \left( 2d + h^0(L_B \otimes K_\Sigma) \right) = 4d.
\end{align*}
Coupled with the lower bound of Proposition \ref{thm:LowerBound}, we deduce that $m_1 = 4d = \frac{A}{\pi}$.
\end{proof}

\subsubsection{Proof of Proposition \ref{thm:UpperBound}}

\indent \indent Let $u \colon \Sigma^2 \to \mathbb{S}^6$ be a null-torsion holomorphic curve.  On the complex vector bundle $\widehat{N}^{1,0}$, we recall the two $\overline{\partial}$-operators
\begin{align*}
\overline{\partial}^\nabla, \overline{\partial}^D \colon \Gamma(\widehat{N}^{1,0}) \to \Omega^{0,1}(\Sigma) \otimes \Gamma(\widehat{N}^{1,0}).
\end{align*}
Let $S \colon \Gamma(\widehat{N}^{1,0}) \to \Omega^{0,1}(\Sigma) \otimes \Gamma(\widehat{N}^{1,0})$ denote the difference tensor, i.e.,
$$S(\xi) := \overline{\partial}^\nabla\xi - \overline{\partial}^D\xi$$
for smooth sections $\xi \in \Gamma(\widehat{N}^{1,0})$.  Since $S$ is a tensor, it can be viewed as a pointwise operator, i.e., as a bundle map $\widehat{N}^{1,0} \to \Lambda^{0,1}\Sigma \otimes \widehat{N}^{1,0}$.  To understand $S$ in more detail, let $(f_2, \overline{f}_3)$ be a $T^2$-frame for $\widehat{N}^{1,0}$ at a point $p \in \Sigma$.  By (\ref{eq:NHatHolo3}) and (\ref{eq:NHatHolo4}), we have
\begin{align*}
S(f_2) & = 0 & S(\overline{f}_3) & = \textstyle \frac{1}{2}\overline{\zeta}_1 \otimes f_2.
\end{align*}
Thus, using the Hermitian vector bundle isomorphism $\widehat{N}^{1,0} \simeq L_N \oplus L_B^*$, we can regard $S$ as a map
$$S \colon L_N \oplus L_B^* \to \Lambda^{0,1}\Sigma \otimes L_N$$
such that $S|_{L_N} = 0$. \\

\indent Let $\xi \in \Gamma(\widehat{N}^{1,0})$ be a smooth section.  Write $\xi = \xi_2 + \xi_3$, where $\xi_2 \in \Gamma(L_N)$ and $\xi_2 \in \Gamma(L_B^*)$.  The condition that $\xi$ be $\nabla^\perp$-holomorphic is equivalent to
$$\overline{\partial}^D\xi = -S(\xi),$$
which (by decomposing into $L_N$ and $L_B^*$ components) is in turn is equivalent to the system
\begin{align*}
\overline{\partial}^D \xi_2 & = -S(\xi_3) \\
\overline{\partial}^D \xi_3 & = 0.
\end{align*}
Now, by Proposition \ref{thm:HatHoloSplitting}(b), there is a holomorphic isomorphism $(\widehat{N}^{1,0}, D^\perp) \cong L_N \oplus L_B^*$.  Thus, the $\nabla^\perp$-holomorphicity condition can finally be rewritten as
\begin{align}
\overline{\partial}^{L_N} \xi_2 & = -S(\xi_3) \label{eq:DHolo1} \\
\overline{\partial}^{L_B^*} \xi_3 & = 0, \label{eq:DHolo2}
\end{align}
where $\overline{\partial}^{L_N}$ and $\overline{\partial}^{L_B^*}$ are the respective $\overline{\partial}$-operators on $L_N$ and $L_B^*$.

\indent The upshot is that the system (\ref{eq:DHolo1})-(\ref{eq:DHolo2}) is decoupled, making it easy to count its solutions.  Indeed, the solution space of (\ref{eq:DHolo2}) is $H^0(L_B^*)$, a $\mathbb{C}$-vector space of complex dimension $h^0(L_B^*)$.  Moreover, for each $\xi_3 \in H^0(L_B^*)$, the set of solutions to (\ref{eq:DHolo1}) is either empty or an affine space of complex dimension $h^0(L_N)$.  Geometrically, the set of solutions to (\ref{eq:DHolo1})-(\ref{eq:DHolo2}) can be viewed as a bundle of complex $h^0(L_N)$-dimensional affine spaces over the set of $\xi_3 \in H^0(L_B^*)$ for which (\ref{eq:DHolo1}) has a solution.  In conclusion, the set of $\nabla^\perp$-holomorphic sections has complex dimension at most $h^0(L_N) + h^0(L_B^*)$.  This proves Proposition \ref{thm:UpperBound}.

\section{The Nullity: Proof of Theorem 1.2}

\indent \indent Let $u \colon \Sigma^2 \to \mathbb{S}^6$ be a null-torsion holomorphic curve of area $A = 4\pi d$.  The aim of this section is to prove the following lower bound on the nullity of the Jacobi operator $\mathcal{L}$ of $u$:
$$\text{Nullity}(u) \geq 2d + \chi(\Sigma).$$
The idea of the proof is to identify a subspace of $\text{Null}(u) := \{\eta \in \Gamma(N\Sigma) \colon \mathcal{L}\eta = 0\}$ that is isomorphic to $H^0(K_\Sigma^* \otimes L_N)$.  The dimension of $H^0(K_\Sigma^* \otimes L_N)$ will then be estimated via Riemann-Roch. \\
\indent Our method in this section is not original.  Indeed, our calculations are direct analogues those in Montiel and Urbano's study \cite{montiel-urbano} of superminimal surfaces in self-dual Einstein $4$-manifolds.  To ease notation, we enact the following conventions: \\

\noindent \textbf{Convention:} For the remainder of this work, we let $J$ denote the complex structure on $u^*(T\mathbb{S}^6) = T\Sigma \oplus N\Sigma$ that on $T\Sigma$ is given by the metric and orientation, and on $N\Sigma$ is given by $\widehat{J}$.  Recall that as complex bundles, we have an isomorphism
$$\widehat{N}^{1,0} \simeq (N\Sigma, J) = E_N \oplus E_B^*.$$
As \textit{real} vector bundles, we have $N\Sigma \simeq E_N \oplus E_B$, and we will use the notation $\eta = \eta^N + \eta^B$ for the decomposition of a normal vector $\eta \in N\Sigma$ into its $E_N$ and $E_B$ components. \\

\noindent \textbf{Convention:} On the complex bundle $\widehat{N}^{1,0}$, the only holomorphic structure we will need from now on is $\overline{\partial}^\nabla$.  Thus, we will abbreviate ``$\nabla^\perp$-holomorphic" as ``holomorphic," and abbreviate ``$(\widehat{J}, \nabla^\perp)$-real holomorphic" as ``real-holomorphic."  Noting that holomorphic sections of $K_\Sigma^* \otimes \widehat{N}^{1,0}$ are in bijection with real-holomorphic sections of $T^*\Sigma \otimes N\Sigma$, we may identify
\begin{align*}
\Gamma(K_\Sigma^* \otimes L_N) & \cong \{\alpha \in \Gamma(T^*\Sigma \otimes E_N) \colon \alpha \circ J = -J \circ \alpha\} \\
H^0(K_\Sigma^* \otimes L_N) & \cong \{\alpha \in \Gamma(T^*\Sigma \otimes E_N) \colon \alpha \circ J = -J \circ \alpha \text{ and } \alpha \text{ real-holomorphic}\}.
\end{align*}
These identifications will frequently be made without comment.

\subsection{Preliminaries}

\indent \indent Recall that the Levi-Civita connection $\overline{\nabla}$ on $\mathbb{S}^6$ gives a tangential connection $\nabla^\top$on $T\Sigma$ and a normal connection $\nabla^\perp$ on $N\Sigma$.  These give an induced connection $\widetilde{\nabla}$ on $T^*\Sigma \otimes N\Sigma$, and an induced connection $\widehat{\nabla}$ on $T^*\Sigma \otimes T^*\Sigma \otimes N\Sigma$.  Explicitly, for $\alpha \in \Gamma(T^*\Sigma \otimes N\Sigma)$, we have
\begin{equation}
(\widetilde{\nabla}_Y\alpha)(X) := \nabla^\perp_Y(\alpha(X)) - \alpha(\nabla^\top_YX),
\label{eq:Alpha}
\end{equation}
and for $\beta \in \Gamma(T^*\Sigma \otimes T^*\Sigma \otimes N\Sigma)$, we have
\begin{equation}
(\widehat{\nabla}_Z\beta)(X,Y) := \nabla^\perp_Z(\beta(X,Y)) - \beta(\nabla^\top_ZX, Y) - \beta(X, \nabla^\top_ZY).
\label{eq:Beta}
\end{equation}
We remark that if $\beta$ is a symmetric $2$-tensor (i.e., $\beta(X,Y) = \beta(Y,X)$ for all $X,Y \in T\Sigma$), then $\widehat{\nabla}_Z\beta$ is also a symmetric $2$-tensor. \\

\indent For $\alpha \in \Gamma(T^*\Sigma \otimes N\Sigma)$, we recall that its \textit{second covariant derivative} $\widetilde{\nabla}^2_{X,Y}\alpha \in \Gamma(T^*\Sigma \otimes N\Sigma)$ at $X,Y \in T\Sigma$ is given by:
\begin{align*}
\widetilde{\nabla}^2_{XY}\alpha & := (\widehat{\nabla}_X\widetilde{\nabla}\alpha)(Y, \cdot) = \widetilde{\nabla}_X\widetilde{\nabla}_Y\alpha - \widetilde{\nabla}_{\nabla^\top_XY}\alpha.
\end{align*}
We also recall the \textit{Ricci identity}
\begin{equation}
\widetilde{\nabla}^2_{XY}\alpha = \widetilde{\nabla}^2_{YX}\alpha + \widetilde{R}_{XY}\alpha \label{eq:RicciIdentity}
\end{equation}
where $\widetilde{R}$ is the curvature of $\widetilde{\nabla}$.  A straightforward calculation shows that
\begin{equation}
(\widetilde{R}_{XY}\alpha)(Z) = R^\perp_{XY}(\alpha(Z)) - \alpha(R^\top_{XY}(Z)). \label{eq:RTilde}
\end{equation} \\
\indent We let $(\nabla^\perp)^* \colon \Gamma(T^*\Sigma \otimes N\Sigma) \to \Gamma(N\Sigma)$ denote the formal adjoint of $\nabla^\perp$, so that
$$\int_\Sigma \left\langle \nabla^\perp \xi, \alpha \right\rangle = \int_\Sigma \left\langle \xi, (\nabla^\perp)^*\alpha \right\rangle \ \ \text{ for all } \xi \in \Gamma(N\Sigma), \, \alpha \in \Gamma(T^*\Sigma \otimes N\Sigma).$$
In terms of a local orthonormal frame $(e_1, e_2)$ on $T\Sigma$, one can compute $(\nabla^\perp)^*\alpha$ via the well-known formula:
\begin{equation}
(\nabla^\perp)^*\alpha = -(\widetilde{\nabla}_{e_i}\alpha)(e_i).
\label{eq:CodiffAlpha}
\end{equation}
Similarly, we let $\widetilde{\nabla}^* \colon \Gamma(T^*\Sigma \otimes T^*\Sigma \otimes N\Sigma) \to \Gamma(T^*\Sigma \otimes N\Sigma)$ denote the formal adjoint of $\widetilde{\nabla}$.  Again, in terms of a local orthonormal frame $(e_1, e_2)$ on $T\Sigma$, one has the formula
\begin{equation}
\widetilde{\nabla}^*\beta = -(\widehat{\nabla}_{e_i}\beta)(e_i, \cdot).
\label{eq:CodiffBeta}
\end{equation}

\subsection{Strategy of Proof}

\indent \indent For a fixed $\eta \in \Gamma(N\Sigma)$, we consider the section $\Psi_\eta \in \Gamma(T^*\Sigma \otimes N\Sigma)$ given by
$$\Psi_\eta(X) := \widehat{\mathscr{D}}_X\eta = \nabla^\perp_{JX}\eta - J(\nabla^\perp_X \eta).$$
The basic properties of $\Psi_\eta$ are given by the following two lemmas.  Verifying Lemma \ref{thm:A} is straightforward; we will prove only Lemma \ref{thm:Codiff1}.

\begin{lem} \label{thm:A} We have: \\
\indent (a) $\eta$ is  real-holomorphic $\iff$ $\Psi_\eta = 0$. \\
\indent (b) $\Psi_\eta \circ J = -J \circ \Psi_\eta$. \\
\indent (c) We have
$$(\widetilde{\nabla}_Y\Psi_\eta)(JX) = -J\left[ (\widetilde{\nabla}_Y\Psi_\eta)(X) \right]\!.$$
\end{lem}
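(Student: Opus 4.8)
The plan is to deduce all three parts directly from the definition $\Psi_\eta(X) = \nabla^\perp_{JX}\eta - J(\nabla^\perp_X\eta)$, leaning on three parallelism facts that hold in the null-torsion setting: $J^2 = -\mathrm{Id}$ on both $T\Sigma$ and $N\Sigma$; the tangential complex structure is parallel, $\nabla^\top J = 0$ (standard for the metric-and-orientation complex structure on a Riemann surface); and $\nabla^\perp J = 0$, where $J = \widehat{J}$ on $N\Sigma$. The last fact is precisely $\nabla^\perp \widehat{J} = 0$, which the equivalence established in $\S$\ref{sec:J2} guarantees under the null-torsion hypothesis. I would first remark that, because of $\nabla^\perp\widehat{J} = 0$, we have $\nabla^\perp_X(\widehat{J}\eta) = J(\nabla^\perp_X\eta)$, so $\Psi_\eta$ coincides with $\widehat{\mathscr{D}}\eta$. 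Part (a) is then immediate: by (\ref{eq:RealHolo}) and the surrounding discussion, $\eta$ is real-holomorphic exactly when $\widehat{\mathscr{D}}\eta = 0$, i.e.\ when $\Psi_\eta = 0$.

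For part (b), I would substitute $JX$ for $X$ and expand, using $J^2 = -\mathrm{Id}$ on $T\Sigma$:
\begin{equation*}
\Psi_\eta(JX) = \nabla^\perp_{J^2 X}\eta - J(\nabla^\perp_{JX}\eta) = -\nabla^\perp_X\eta - J(\nabla^\perp_{JX}\eta).
\end{equation*}
Comparing this against $-J(\Psi_\eta(X)) = -J(\nabla^\perp_{JX}\eta) + J^2(\nabla^\perp_X\eta)$, where now $J^2 = -\mathrm{Id}$ on $N\Sigma$, the two expressions agree term by term. Hence $\Psi_\eta(JX) = -J(\Psi_\eta(X))$, which is the claimed anticommutation $\Psi_\eta \circ J = -J \circ \Psi_\eta$.

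For part (c), I would expand $(\widetilde{\nabla}_Y\Psi_\eta)(JX)$ using the definition (\ref{eq:Alpha}):
\begin{equation*}
(\widetilde{\nabla}_Y\Psi_\eta)(JX) = \nabla^\perp_Y(\Psi_\eta(JX)) - \Psi_\eta(\nabla^\top_Y(JX)).
\end{equation*}
The key simplifications are $\nabla^\top_Y(JX) = J(\nabla^\top_Y X)$, by $\nabla^\top J = 0$, and $\Psi_\eta(JX) = -J(\Psi_\eta(X))$, by part (b). Commuting $J$ past $\nabla^\perp_Y$ via $\nabla^\perp J = 0$ turns the first term into $-J(\nabla^\perp_Y(\Psi_\eta(X)))$, while applying part (b) again to $\Psi_\eta(J(\nabla^\top_Y X))$ converts the second term into $+J(\Psi_\eta(\nabla^\top_Y X))$. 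Factoring out $-J$ then yields $-J[\nabla^\perp_Y(\Psi_\eta(X)) - \Psi_\eta(\nabla^\top_Y X)] = -J[(\widetilde{\nabla}_Y\Psi_\eta)(X)]$, as desired.

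The computations are purely formal, so there is no genuine obstacle; the only point demanding care is bookkeeping the two distinct roles played by the symbol $J$ (tangentially on $T\Sigma$ versus normally as $\widehat{J}$ on $N\Sigma$) and verifying that every commutation of $J$ through a covariant derivative is licensed by the correct parallelism statement. It is exactly at these normal-bundle commutations that the null-torsion hypothesis is used, through $\nabla^\perp\widehat{J} = 0$.
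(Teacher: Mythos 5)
Your proof is correct; the paper itself omits the argument, stating only that ``Verifying Lemma \ref{thm:A} is straightforward,'' and your direct verification from the definition of $\Psi_\eta$ — using $J^2 = -\mathrm{Id}$ for (b), and $\nabla^\top J = 0$ together with $\nabla^\perp \widehat{J} = 0$ (valid by null-torsion) for (a) and (c) — is exactly the intended computation. Your closing remark correctly isolates where the null-torsion hypothesis enters, namely in commuting $\widehat{J}$ past $\nabla^\perp$.
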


\begin{lem} \label{thm:Codiff1} We have:
$$(\nabla^\perp)^* \Psi_\eta = -J(\mathcal{L}\eta+2\eta).$$
\end{lem}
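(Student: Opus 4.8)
The plan is to compute $(\nabla^\perp)^*\Psi_\eta$ directly from the codifferential formula (\ref{eq:CodiffAlpha}), namely $(\nabla^\perp)^*\Psi_\eta = -(\widetilde{\nabla}_{e_i}\Psi_\eta)(e_i)$ in a local orthonormal frame $(e_1,e_2)$ for $T\Sigma$ with $Je_1 = e_2$ and $Je_2 = -e_1$, and then to match the outcome against $\mathcal{L}\eta = -\Delta^\perp\eta - \mathcal{B}(\eta) - \mathcal{R}(\eta)$. First I would substitute $\Psi_\eta(X) = \nabla^\perp_{JX}\eta - J(\nabla^\perp_X\eta)$ into the definition (\ref{eq:Alpha}) of $\widetilde{\nabla}$, writing $(\widetilde{\nabla}_{e_i}\Psi_\eta)(e_i) = \nabla^\perp_{e_i}(\Psi_\eta(e_i)) - \Psi_\eta(\nabla^\top_{e_i}e_i)$.

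The key step uses that $u$ is null-torsion, so $\nabla^\perp\widehat{J} = \nabla^\perp J = 0$ on $N\Sigma$; this lets me pull $J$ through $\nabla^\perp$ freely. Doing so and grouping terms, the expression separates into a piece carrying $J$ and a piece that does not:
\begin{align*}
(\widetilde{\nabla}_{e_i}\Psi_\eta)(e_i) = \left[ \nabla^\perp_{e_i}\nabla^\perp_{Je_i}\eta - \nabla^\perp_{\nabla^\top_{e_i}(Je_i)}\eta \right] - J\left[ \nabla^\perp_{e_i}\nabla^\perp_{e_i}\eta - \nabla^\perp_{\nabla^\top_{e_i}e_i}\eta \right].
\end{align*}
Here I have used $\nabla^\top_{e_i}(Je_i) = J\nabla^\top_{e_i}e_i$, valid because $J$ on $T\Sigma$ is parallel for the torsion-free metric connection $\nabla^\top$. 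The second bracket is exactly the connection Laplacian $\Delta^\perp\eta$, while the first bracket is the ``twisted'' second covariant derivative, which evaluated in the chosen frame gives $(\nabla^\perp)^2_{e_1,e_2}\eta - (\nabla^\perp)^2_{e_2,e_1}\eta$. By the Ricci identity for $\nabla^\perp$, this antisymmetrized second derivative equals $R^\perp_{12}\eta$. Hence $(\widetilde{\nabla}_{e_i}\Psi_\eta)(e_i) = R^\perp_{12}\eta - J\Delta^\perp\eta$, and therefore $(\nabla^\perp)^*\Psi_\eta = J\Delta^\perp\eta - R^\perp_{12}\eta$.

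Finally I would reconcile this with the Jacobi operator. By Proposition \ref{thm:RFormula} and Proposition \ref{thm:BFormula}, $\mathcal{R}(\eta) = 2\eta$ and $\mathcal{B}(\eta) = JR^\perp_{12}\eta$, so $\mathcal{L}\eta + 2\eta = -\Delta^\perp\eta - JR^\perp_{12}\eta$. Applying $-J$ and using $J^2 = -1$ gives $-J(\mathcal{L}\eta+2\eta) = J\Delta^\perp\eta - R^\perp_{12}\eta$, which agrees exactly with the computed value of $(\nabla^\perp)^*\Psi_\eta$. I expect the main obstacle to be purely bookkeeping: keeping signs straight when commuting $J$ past $\nabla^\perp$ and, above all, correctly identifying the antisymmetrized second derivative $(\nabla^\perp)^2_{e_i,Je_i}\eta$ with $+R^\perp_{12}\eta$ rather than its negative. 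Once that identification is pinned down, the remaining steps are routine.
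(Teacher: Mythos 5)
Your proof is correct and follows essentially the same route as the paper's: both expand $-(\widetilde{\nabla}_{e_i}\Psi_\eta)(e_i)$ using $\nabla^\perp\widehat{J}=0$ and $\nabla^\top J = 0$, identify the $J$-free part as $R^\perp_{12}\eta$ (you via the Ricci identity for the antisymmetrized second derivative, the paper via the equivalent identities $\nabla^\perp_{e_i}\nabla^\perp_{Je_i}=R^\perp_{12}+\nabla^\perp_{[e_1,e_2]}$ and $J(\nabla^\top_{e_i}e_i)=[e_1,e_2]$), and then substitute $\mathcal{R}(\eta)=2\eta$ and $\mathcal{B}(\eta)=\widehat{J}R^\perp_{12}\eta$ to cancel the curvature term. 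The sign conventions you flag as the main risk do check out as written.
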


\begin{proof}
Using (\ref{eq:CodiffAlpha}) and (\ref{eq:Alpha}), we calculate 
\begin{align*}
(\nabla^\perp)^*\Psi_\eta = -(\widetilde{\nabla}_{e_i}\Psi_\eta)(e_i) & = -\nabla^\perp_{e_i}(\Psi_\eta(e_i)) + \Psi_\eta(\nabla^\top_{e_i}e_i) \\
& = -\nabla^\perp_{e_i}\nabla^\perp_{Je_i}\eta + J\nabla^\perp_{e_i}\nabla^\perp_{e_i}\eta + \nabla^\perp_{J(\nabla^\top_{e_i}e_i)}\eta - J(\nabla^\perp_{\nabla^\top_{e_i}e_i}\eta) \\
& = -\nabla^\perp_{e_i}\nabla^\perp_{Je_i}\eta + \nabla^\perp_{J(\nabla^\top_{e_i}e_i)}\eta + J \Delta^\perp \eta.
\end{align*}
Now, using that $\nabla^\perp_{e_i}\nabla^\perp_{Je_i} = R^\perp_{12} + \nabla^\perp_{[e_1, e_2]}$ and that $J(\nabla^\top_{e_i}e_i) = [e_1, e_2]$, we obtain
\begin{align*}
(\nabla^\perp)^*\Psi_\eta & = -R^\perp_{12}\eta + J \Delta^\perp \eta \\
& = -R^\perp_{12}\eta + J(-\mathcal{L}\eta - 2\eta) - J(\mathcal{B}\eta)
\end{align*}
where we used that $\Delta^\perp \eta = -\mathcal{L}\eta - \mathcal{B}\eta - 2\eta$.  Finally, using (\ref{eq:BFormula1}), we arrive at the result.
\end{proof}

\indent Let $\text{Null}(u) = \{\eta \in \Gamma(N\Sigma) \colon \mathcal{L}\eta = 0\}$ denote the null space of the Jacobi operator $\mathcal{L}$.  Consider the linear map
\begin{align*}
G \colon \{\eta \in \text{Null}(u) \colon (\Psi_\eta)^B = 0\} & \to \Gamma(K_\Sigma^* \otimes L_N) \\
\eta & \mapsto \widehat{\mathscr{D}}\eta = \Psi_\eta.
\end{align*}
Observe that $G$ is injective.  Indeed, if $G(\eta) = 0$, then $\Psi_\eta = 0$, so $\eta$ is real-holomorphic, so $\mathcal{L}\eta = -2\eta$, but $\mathcal{L}\eta = 0$, so $\eta = 0$.  Our main claim in this section concerns the image of $G$: 

\begin{prop} \label{thm:GSurj} The image of $G$ is equal to $H^0(K_\Sigma^* \otimes L_N)$. 
\end{prop}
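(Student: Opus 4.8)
The plan is to prove the two inclusions separately. The inclusion $\operatorname{im}(G) \subseteq H^0(K_\Sigma^* \otimes L_N)$ should be the easier direction. Starting from $\eta \in \operatorname{Null}(u)$ with $(\Psi_\eta)^B = 0$, Lemma \ref{thm:A}(b),(c) already tell me that $\Psi_\eta$ anti-commutes with $J$ and that $\widetilde{\nabla}_Y\Psi_\eta$ does too; combined with the condition $(\Psi_\eta)^B = 0$, this places $\Psi_\eta$ in $\Gamma(K_\Sigma^* \otimes L_N)$ under the identifications fixed in the conventions. To upgrade membership to \emph{holomorphicity}, I need to show $\Psi_\eta$ is real-holomorphic as a section of $T^*\Sigma \otimes E_N$. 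The natural mechanism is the Ricci identity (\ref{eq:RicciIdentity})--(\ref{eq:RTilde}) together with Lemma \ref{thm:Codiff1}: since $\mathcal{L}\eta = 0$, Lemma \ref{thm:Codiff1} gives $(\nabla^\perp)^*\Psi_\eta = -2J\eta$, and I expect that differentiating this relation (or combining it with a Weitzenb\"ock-type identity for the operator $\widehat{\mathscr{D}}$) forces the $\overline{\partial}^\nabla$-component of $\Psi_\eta$ to vanish. Concretely, I would compute $\overline{\partial}^\nabla(\Psi_\eta)$ in terms of second derivatives of $\eta$, use the Ricci identity to commute covariant derivatives, and substitute the curvature formulas (\ref{eq:RPerp1})--(\ref{eq:RPerp2}) to see that the obstruction terms cancel precisely because $\mathcal{L}\eta = 0$.

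For the reverse inclusion $H^0(K_\Sigma^* \otimes L_N) \subseteq \operatorname{im}(G)$, I would start with a real-holomorphic section $\alpha \in \Gamma(T^*\Sigma \otimes E_N)$ satisfying $\alpha \circ J = -J \circ \alpha$ and construct a preimage $\eta \in \operatorname{Null}(u)$ with $\Psi_\eta = \alpha$. The strategy is to solve the equation $\widehat{\mathscr{D}}\eta = \alpha$ for $\eta \in \Gamma(N\Sigma)$, then verify that any such solution automatically lies in $\operatorname{Null}(u)$ and has $(\Psi_\eta)^B = (\alpha)^B = 0$. Solvability should follow from analyzing the operator $\widehat{\mathscr{D}} \colon \Gamma(N\Sigma) \to \Gamma(T^*\Sigma \otimes N\Sigma)$: one takes the formal adjoint equation using (\ref{eq:CodiffAlpha}) and Lemma \ref{thm:Codiff1}, which relates $(\nabla^\perp)^*\alpha$ to $J(\mathcal{L}\eta + 2\eta)$. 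The key point is that the cokernel of $\widehat{\mathscr{D}}$ on the relevant subspace is controlled by the real-holomorphicity of $\alpha$, so that the compatibility condition for solvability is exactly the hypothesis $\overline{\partial}^\nabla \alpha = 0$.

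The main obstacle will be the reverse inclusion, specifically producing the solution $\eta$ and confirming it is genuinely in the null space. This is essentially a surjectivity-of-a-Cauchy-Riemann-operator statement (indeed the introduction flags this as reducing to ``a claim about the image of a certain linear Cauchy-Riemann type operator''), and establishing it requires care with the index theory and cokernel of the elliptic operator in play. I anticipate needing to pair $\alpha$ against an arbitrary element of the cokernel, integrate by parts via (\ref{eq:CodiffAlpha})--(\ref{eq:CodiffBeta}), and invoke that cokernel elements are themselves real-holomorphic of a complementary type, so the pairing vanishes by a degree/Serre-duality argument. Given that the paper defers the proof to $\S$5.3--$\S$5.4, I expect the bulk of the work lives there; my outline above reduces Proposition \ref{thm:GSurj} to verifying (i) that $\Psi_\eta$ is real-holomorphic when $\mathcal{L}\eta = 0$ via the Ricci identity, and (ii) solvability of $\widehat{\mathscr{D}}\eta = \alpha$ with the correct $E_N$-type, the latter being the crux.
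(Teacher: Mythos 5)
Your overall architecture (two inclusions, with the reverse one as the crux) matches the paper, but both halves have genuine gaps. For the forward inclusion, the mechanism is not a pointwise cancellation: if you form the obstruction $\Omega_\eta(X,Y) = (\widetilde{\nabla}_{JX}\Psi_\eta)(JY) - (\widetilde{\nabla}_X\Psi_\eta)(Y)$ to real-holomorphicity of $\Psi_\eta$ and commute derivatives via the Ricci identity, you only control its \emph{divergence}: one gets $\widetilde{\nabla}^*\Omega_\eta = -\Psi_{\mathcal{L}\eta} + (2K-2)(\Psi_\eta)^B$ (Lemma \ref{thm:Codiff2}), not $\Omega_\eta$ itself. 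Concluding $\Omega_\eta = 0$ then requires a global Bochner-type step: the symmetry $\Omega_\eta(JX,JY) = -\Omega_\eta(X,Y)$ gives $\Vert\Omega_\eta\Vert^2 = -2\langle(\widetilde{\nabla}_{e_i}\Psi_\eta)(e_j), \Omega_\eta(e_i,e_j)\rangle$ pointwise, and integrating by parts yields $\int_\Sigma\Vert\Omega_\eta\Vert^2 = 2\int_\Sigma\langle\Psi_\eta,\, \Psi_{\mathcal{L}\eta} + (2-2K)(\Psi_\eta)^B\rangle$. Note also that the hypothesis $(\Psi_\eta)^B = 0$ is needed at this stage to kill the term $(2-2K)\Vert(\Psi_\eta)^B\Vert^2$, not merely to place $\Psi_\eta$ in $\Gamma(T^*\Sigma\otimes E_N)$; your sketch invokes only $\mathcal{L}\eta = 0$ for the holomorphicity and would not close.

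For the reverse inclusion there are two problems. First, your claim that any solution of $\widehat{\mathscr{D}}\eta = \alpha$ automatically lies in $\mathrm{Null}(u)$ is false: solutions form an affine space modelled on $\ker\widehat{\mathscr{D}}$, which by Proposition \ref{thm:LowestEigen} is the $(-2)$-eigenspace, so a generic solution is not a Jacobi field. Second, the abstract solvability route is both harder than necessary and delicate: the relevant cokernel is $H^1(\widehat{N}^{1,0};\nabla^\perp)$, which is not known to vanish for general genus (and Theorem \ref{thm:Nullity} is asserted for all genera), so you would genuinely have to show that real-holomorphic $\alpha$ annihilate it. The paper sidesteps all of this by exhibiting an explicit candidate $\xi = \frac{1}{2}J(\nabla^\perp)^*\alpha$ and verifying $\Psi_\xi = \alpha$ directly from the commutation identities of Lemma \ref{thm:D} together with the real-holomorphicity of $\alpha$; then, since $\Omega_\xi = 0$ forces $\Psi_{\mathcal{L}\xi} = 0$ (Lemma \ref{thm:Codiff2} again), $\mathcal{L}\xi$ is real-holomorphic, hence a $(-2)$-eigenvector, and the corrected section $\eta = \xi + \frac{1}{2}\mathcal{L}\xi$ lies in $\mathrm{Null}(u)$ and still satisfies $\Psi_\eta = \Psi_\xi = \alpha$. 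This explicit preimage plus correction is the step your outline is missing.
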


\indent Accepting Proposition \ref{thm:GSurj} on faith for a moment, we see that $G$ gives an isomorphism
\begin{align*}
\{\eta \in \text{Null}(u) \colon (\Psi_\eta)^B = 0\} \cong H^0(K_\Sigma^* \otimes L_N).
\end{align*}
From this isomorphism, we now deduce Theorem 1.2:

\begin{proof} Recall from Proposition \ref{thm:c1LineBundles} that $c_1(L_N) = -\chi(\Sigma) + d$ and $c_1(K_\Sigma^*) = \chi(\Sigma)$, and hence $c_1(K_\Sigma^* \otimes L_N) = d$. By Proposition \ref{thm:GSurj}, we now estimate
\begin{align*}
\text{Nullity}(u) \geq \dim_{\mathbb{R}}\{\eta \in \text{Null}(u) \colon (\Psi_\eta)^B = 0\} & = \dim_{\mathbb{R}}[H^0(K_\Sigma^* \otimes L_N)] \\
& = 2 h^0(K_\Sigma^* \otimes L_N) \\
& = 2 h^1(K_\Sigma^* \otimes L_N) + 2c_1(K_\Sigma^* \otimes L_N) + \chi(\Sigma) \\
& = 2 h^1(K_\Sigma^* \otimes L_N) + 2d + \chi(\Sigma),
\end{align*}
where we used Riemann-Roch in the second-to-last step.  Finally, using $h^1(K_\Sigma^* \otimes L_N) \geq 0$, we conclude the result.
\end{proof}

\begin{rmk} The estimate $h^1(K_\Sigma^* \otimes L_N) \geq 0$ can be slightly sharpened.  Indeed, by Serre Duality, we have $h^1(K_\Sigma^* \otimes L_N) = h^0(K_\Sigma \otimes K_\Sigma \otimes L_N^*)$, and we compute $c_1(K_\Sigma \otimes K_\Sigma \otimes L_N^*) = -\chi(\Sigma) - d$.  Therefore, if $d > 2g - 2$ --- which, by Proposition \ref{thm:GenusDegree} holds if $g \leq 6$ --- then $K_\Sigma \otimes K_\Sigma \otimes L_N^*$ is a negative line bundle, and hence $h^1(K_\Sigma^* \otimes L_N) = 0$.
\end{rmk}

\indent The remainder of this section consists of a proof of Proposition \ref{thm:GSurj}, which naturally divides into two halves.  That is, in Proposition \ref{thm:GSurjA}, we will show that $\text{Im}(G) \subset H^0(K_\Sigma^* \otimes L_N)$, and in Proposition \ref{thm:GSurjB}, we will show that $H^0(K_\Sigma^* \otimes L_N) \subset \text{Im}(G)$.

\subsection{Technical Lemmas}

\indent \indent We begin by measuring the extent to which the smooth section $\Psi_\eta \in \Gamma(T^*\Sigma \otimes N\Sigma)$ might fail to be real-holomorphic.  So, for a fixed $\eta \in \Gamma(N\Sigma)$, we consider the section $\Omega_\eta \in \Gamma(T^*\Sigma \otimes T^*\Sigma \otimes N\Sigma)$ given by
$$\Omega_\eta(X,Y) := (\widetilde{\nabla}_{JX} \Psi_\eta)(JY) - (\widetilde{\nabla}_X \Psi_\eta)(Y).$$
We now establish the basic properties of $\Omega_\eta$ by analogy with Lemmas \ref{thm:A} and \ref{thm:Codiff1}.  The analogue of Lemma \ref{thm:A} is easy:

\begin{lem} \label{thm:B} We have: \\
\indent (a) $\Psi_\eta$ is real-holomorphic $\iff$ $\Omega_\eta = 0$. \\
\indent (b) $\Omega_\eta(JX, JY) = -\Omega_\eta(X,Y)$.  Therefore, $\Omega_\eta$ is an $N\Sigma$-valued symmetric $2$-tensor on $T\Sigma$ of trace zero. \\
\indent (c) We have the identity:
\begin{equation*}
(\widehat{\nabla}_{e_i} \Omega_\eta)(v, e_i) = (\widetilde{\nabla}^2_{e_i, Jv} \Psi_\eta)(Je_i) - (\widetilde{\nabla}^2_{e_i, v} \Psi_\eta)(e_i).
\end{equation*}
\end{lem}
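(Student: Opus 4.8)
The plan is to treat the three parts in order, writing $\Psi := \Psi_\eta$ throughout and leaning entirely on Lemma \ref{thm:A} together with the two parallelism facts $\nabla^\top J = 0$ (Levi-Civita on the Riemann surface) and $\nabla^\perp \widehat{J} = 0$ (valid since $u$ is null-torsion). For (a), the idea is to collapse the defining expression for $\Omega_\eta$ using Lemma \ref{thm:A}(c), which says that each $\widetilde{\nabla}_Y\Psi$ again anti-commutes with $J$. Applying that identity with differentiation direction $JX$ and form-argument $Y$ gives $(\widetilde{\nabla}_{JX}\Psi)(JY) = -J\big[(\widetilde{\nabla}_{JX}\Psi)(Y)\big]$, whence
\begin{equation*}
\Omega_\eta(X,Y) = -J\big[(\widetilde{\nabla}_{JX}\Psi)(Y) - J\big((\widetilde{\nabla}_X\Psi)(Y)\big)\big].
\end{equation*}
The bracketed quantity is precisely $\mathcal{E}_X\Psi := \widetilde{\nabla}_{JX}\Psi - J\circ(\widetilde{\nabla}_X\Psi)$, the evident lift to $T^*\Sigma \otimes N\Sigma$ of the operator $\widehat{\mathscr{D}}$ whose kernel defines real-holomorphicity (i.e. the real incarnation of $\overline{\partial}$ on $K_\Sigma^* \otimes \widehat{N}^{1,0}$). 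Since $J$ is invertible, $\Omega_\eta = 0$ if and only if $\mathcal{E}\Psi = 0$, which is exactly the assertion that $\Psi_\eta$ is real-holomorphic.

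For (b), I would first read off the anti-invariance directly. Because $J^2 = -\mathrm{Id}$ on $T\Sigma$, substituting $(JX,JY)$ into the definition replaces the two twisted slots by $(-X,-Y)$, giving at once $\Omega_\eta(JX,JY) = (\widetilde{\nabla}_X\Psi)(Y) - (\widetilde{\nabla}_{JX}\Psi)(JY) = -\Omega_\eta(X,Y)$. The remaining two claims are then pure pointwise linear algebra on the $2$-dimensional space $T_p\Sigma$: taking $e_2 = Je_1$, the relation $\Omega_\eta(JX,JY) = -\Omega_\eta(X,Y)$ forces $\Omega_\eta(e_2,e_2) = -\Omega_\eta(e_1,e_1)$ and $\Omega_\eta(e_1,e_2) = \Omega_\eta(e_2,e_1)$, which is exactly symmetry together with trace-freeness. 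The $N\Sigma$-values play no role in this step.

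For (c), the plan is to set $\beta := \widetilde{\nabla}\Psi \in \Gamma(T^*\Sigma \otimes T^*\Sigma \otimes N\Sigma)$, so that by definition $\Omega_\eta(X,Y) = \beta(JX,JY) - \beta(X,Y)$. Since $J$ on $T\Sigma$ is $\nabla^\top$-parallel, twisting the form-slots by $J$ commutes with the induced connection: one checks from (\ref{eq:Beta}) that $\widehat{\nabla}_{e_i}\big(\beta(J\,\cdot\,,J\,\cdot\,)\big) = (\widehat{\nabla}_{e_i}\beta)(J\,\cdot\,,J\,\cdot\,)$. Differentiating $\Omega_\eta$ by $\widehat{\nabla}_{e_i}$ and evaluating at $(v,e_i)$ therefore yields
\begin{equation*}
(\widehat{\nabla}_{e_i}\Omega_\eta)(v,e_i) = (\widehat{\nabla}_{e_i}\beta)(Jv,Je_i) - (\widehat{\nabla}_{e_i}\beta)(v,e_i),
\end{equation*}
and invoking the definition $\widetilde{\nabla}^2_{XY}\Psi = (\widehat{\nabla}_X\widetilde{\nabla}\Psi)(Y,\cdot)$ identifies the two terms as $(\widetilde{\nabla}^2_{e_i,Jv}\Psi)(Je_i)$ and $(\widetilde{\nabla}^2_{e_i,v}\Psi)(e_i)$, which is the claim.

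None of these steps presents a serious obstacle; the computations are formal. The only genuine care required is bookkeeping the two distinct roles of $J$ — as the complex structure on $T\Sigma$ acting in the form-slots, versus $\widehat{J}$ on $N\Sigma$ acting on values — and remembering that both are parallel for the connections in play. The one conceptual observation, rather than a difficulty, is the rewriting in (a) exhibiting $\Omega_\eta$ as $-J$ times the lifted Cauchy–Riemann operator; this is what makes $\Omega_\eta$ the correct object for measuring the failure of $\Psi_\eta$ to be real-holomorphic, and it sets up the subsequent use of Lemma \ref{thm:B} in proving Proposition \ref{thm:GSurj}.
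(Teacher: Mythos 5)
Your proof is correct and follows essentially the same route as the paper's: part (b) is verified identically, and part (c) is the same computation, merely packaged as the single observation that $J$-twisting of the form-slots commutes with $\widehat{\nabla}$ (valid since $\nabla^\top J = 0$) rather than expanded term by term and compared. For part (a), which the paper leaves to the reader, your rewriting $\Omega_\eta(X,\cdot) = -J\circ\bigl[\widetilde{\nabla}_{JX}\Psi_\eta - J\circ(\widetilde{\nabla}_X\Psi_\eta)\bigr]$ via Lemma \ref{thm:A}(c) is a clean and correct way to exhibit $\Omega_\eta$ as $-J$ applied to the lifted Cauchy--Riemann operator acting on $\Psi_\eta$, from which the equivalence is immediate.
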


\begin{proof} (a) This is straightforward and left to the reader. \\ 
 \indent (b) Directly from the definition of $\Omega_\eta$, we have
$$\Omega_\eta(JX,JY) = -\Omega_\eta(X,Y).$$
Thus, letting $(e_1, e_2)$ denote an oriented orthonormal frame on $T\Sigma$, we have both $\Omega_\eta(e_2, e_2) = -\Omega_\eta(e_1, e_1)$ and $\Omega_\eta(e_1, e_2) = \Omega_\eta(e_2, e_1)$, so $\Omega_\eta$ is an $N\Sigma$-valued symmetric $2$-tensor of trace zero. \\
\indent (c) Using (\ref{eq:Alpha}) and the fact that $\nabla^\top_X(JY) = J\nabla^\top_XY$ for all $X,Y$, the first term on the right is
\begin{align*}
(\widetilde{\nabla}^2_{e_i, Jv}\Psi_\eta)(Je_i) & = (\widetilde{\nabla}_{e_i}\widetilde{\nabla}_{Jv}\Psi_\eta)(Je_i) - (\widetilde{\nabla}_{\nabla^\top_{e_i}Jv}\Psi_\eta)(Je_i) \\
& = \nabla_{e_i}^\perp((\widetilde{\nabla}_{Jv}\Psi_\eta)(Je_i)) - (\widetilde{\nabla}_{Jv}\Psi_\eta)(J\nabla^\top_{e_i}e_i) - (\widetilde{\nabla}_{J\nabla^\top_{e_i}v}\Psi_\eta)(Je_i),
\end{align*}
and similarly, the second term on the right is
\begin{align*}
(\widetilde{\nabla}^2_{e_i, v}\Psi_\eta)(e_i) & = (\widetilde{\nabla}_{e_i}\widetilde{\nabla}_{v}\Psi_\eta)(e_i) - (\widetilde{\nabla}_{\nabla^\top_{e_i}v}\Psi_\eta)(e_i) \\
& = \nabla_{e_i}^\perp((\widetilde{\nabla}_{v}\Psi_\eta)(e_i)) - (\widetilde{\nabla}_{v}\Psi_\eta)(\nabla^\top_{e_i}e_i) - (\widetilde{\nabla}_{\nabla^\top_{e_i}v}\Psi_\eta)(e_i).
\end{align*}
On the other hand, using (\ref{eq:Beta}) and the definition of $\Omega_\eta$, the left side of the desired identity is:
\begin{align*}
(\widetilde{\nabla}_{e_i} \Omega_\eta)(v, e_i) & = \nabla^\perp_{e_i}[\Omega_\eta(v,e_i)] - \Omega_\eta(\nabla^\top_{e_i}v, e_i) - \Omega_\eta(v, \nabla^\top_{e_i}e_i) \\
& = \nabla_{e_i}^\perp( (\widetilde{\nabla}_{Jv} \Psi_\eta)(Je_i)) - \nabla_{e_i}^\perp((\widetilde{\nabla}_v \Psi_\eta)(e_i) ) -  (\widetilde{\nabla}_{J{\nabla^\top_{e_i}v}} \Psi_\eta)(Je_i)  \\
& \ \ \ \ \ \ \ + (\widetilde{\nabla}_{\nabla^\top_{e_i}v} \Psi_\eta)(e_i) -  (\widetilde{\nabla}_{Jv} \Psi_\eta)(J\nabla^\top_{e_i}e_i) + (\widetilde{\nabla}_v \Psi_\eta)(\nabla^\top_{e_i}e_i).
\end{align*}
Comparing terms proves the lemma. \end{proof}

\indent The identities in the following lemma are straightforward to prove, but rather tedious.  To streamline discussion, their verifications are deferred to the Appendix.

\begin{lem} \label{thm:D}
Let $\alpha \in \Gamma(T^*\Sigma \otimes N\Sigma)$ satisfy $\alpha \circ J = -J \circ \alpha$.  Let $(e_1, e_2)$ be a local oriented orthonormal frame on $\Sigma$.  Then for all $v \in T\Sigma$:
\begin{equation} \label{eq:Da}
(\widetilde{\nabla}^2_{e_i, Jv}\alpha)(Je_i) - (\widetilde{\nabla}^2_{e_i, v}\alpha)(e_i) = (\widetilde{\nabla}^2_{Jv,e_i}\alpha)(Je_i) - (\widetilde{\nabla}^2_{v, e_i}\alpha)(e_i) - 2\alpha(v) - (2K - 2)[\alpha(v)]^B.
\end{equation}
Moreover, if $(e_1, e_2)$ is geodesic at $p \in \Sigma$, then at the point $p$:
\begin{equation} \label{eq:Db}
J\Psi_{(\nabla^\perp)^*\alpha}(v) = (\widetilde{\nabla}^2_{Jv,e_i}\alpha)(Je_i) - (\widetilde{\nabla}^2_{v, e_i}\alpha)(e_i).
\end{equation}
\end{lem}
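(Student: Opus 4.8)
The plan is to establish the two identities separately: (\ref{eq:Da}) is a purely pointwise curvature computation, while (\ref{eq:Db}) is a direct expansion carried out in a frame geodesic at $p$.

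For (\ref{eq:Da}), I would first apply the Ricci identity (\ref{eq:RicciIdentity}) to interchange the differentiation slots in each term on the left. This reproduces the first two terms on the right, leaving the curvature remainder $(\widetilde{R}_{e_i, Jv}\alpha)(Je_i) - (\widetilde{R}_{e_i, v}\alpha)(e_i)$, so the task reduces to showing this remainder equals $-2\alpha(v) - (2K-2)[\alpha(v)]^B$. Expanding via (\ref{eq:RTilde}) splits it into a tangential and a normal piece. Since $\Sigma$ is a surface, $R^\top_{XY}Z = K(\langle Y,Z\rangle X - \langle X,Z\rangle Y)$, and contracting over the frame gives $\sum_i R^\top_{e_i,Jv}(Je_i) = Kv$ and $\sum_i R^\top_{e_i,v}(e_i) = -Kv$, so the tangential piece contributes $-2K\,\alpha(v)$. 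For the normal piece, the antisymmetry of $R^\perp$ on the $2$-dimensional $T\Sigma$ lets me write each sum as a multiple of $R^\perp_{12}$; using $\alpha\circ J = -J\circ\alpha$, each sum collapses to $\pm R^\perp_{12}(\alpha(Jv))$, so their difference is $2R^\perp_{12}(\alpha(Jv)) = -2R^\perp_{12}(\widehat{J}\alpha(v))$. The Ricci-equation values (\ref{eq:RPerp1})--(\ref{eq:RPerp2}) give $R^\perp_{12} = -2|\kappa|^2\widehat{J}$ on $E_N$ and $R^\perp_{12} = 0$ on $E_B$, whence $R^\perp_{12}(\widehat{J}\alpha(v)) = 2|\kappa|^2[\alpha(v)]^N$ and the normal piece equals $-4|\kappa|^2[\alpha(v)]^N$. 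Finally, substituting the Gauss equation (\ref{eq:GaussEq}) in the form $2|\kappa|^2 = 1 - K$ into $-4|\kappa|^2[\alpha(v)]^N - 2K\alpha(v)$ collapses it to $-2[\alpha(v)]^N - 2K[\alpha(v)]^B$, which is exactly $-2\alpha(v) - (2K-2)[\alpha(v)]^B$.

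For (\ref{eq:Db}), I would first note that the relation $\alpha\circ J = -J\circ\alpha$ is inherited by every covariant derivative: using $\nabla^\top J = J\nabla^\top$ together with $\nabla^\perp\widehat{J} = 0$ (which holds since $u$ is null-torsion) one checks $(\widetilde{\nabla}_Y\alpha)(JX) = -J(\widetilde{\nabla}_Y\alpha)(X)$ and, inductively, $(\widetilde{\nabla}^2_{XY}\alpha)(JZ) = -J(\widetilde{\nabla}^2_{XY}\alpha)(Z)$. Next, writing $\eta := (\nabla^\perp)^*\alpha = -(\widetilde{\nabla}_{e_i}\alpha)(e_i)$ and using that the frame is geodesic at $p$, formula (\ref{eq:Beta}) yields $\nabla^\perp_w[(\widetilde{\nabla}_{e_i}\alpha)(e_i)]|_p = (\widetilde{\nabla}^2_{w,e_i}\alpha)(e_i)$ for any $w$, so that $\nabla^\perp_v\eta|_p = -(\widetilde{\nabla}^2_{v,e_i}\alpha)(e_i)$ and $\nabla^\perp_{Jv}\eta|_p = -(\widetilde{\nabla}^2_{Jv,e_i}\alpha)(e_i)$. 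Since $J^2 = -1$, I then compute $J\Psi_\eta(v) = J\nabla^\perp_{Jv}\eta + \nabla^\perp_v\eta = -J(\widetilde{\nabla}^2_{Jv,e_i}\alpha)(e_i) - (\widetilde{\nabla}^2_{v,e_i}\alpha)(e_i)$, and the inherited symmetry rewrites the first term as $(\widetilde{\nabla}^2_{Jv,e_i}\alpha)(Je_i)$, giving (\ref{eq:Db}).

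The main obstacle is the curvature bookkeeping in (\ref{eq:Da}): one must be careful with the sign conventions coming through the area-form representation of $R^\perp$, exploit the block structure of $R^\perp_{12}$ (zero on $E_B$, equal to $-2|\kappa|^2\widehat{J}$ on $E_N$), and notice that it is precisely the Gauss equation $2|\kappa|^2 = 1-K$ that produces the clean coefficients $-2$ and $-(2K-2)$. By contrast, (\ref{eq:Db}) is routine once the inheritance of the anticommutation relation and the geodesic-frame identity for $(\nabla^\perp)^*\alpha$ are in place.
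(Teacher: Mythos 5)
Your proposal is correct and follows essentially the same route as the paper's appendix: the Ricci identity plus the curvature identities $\sum_i R^\perp_{e_i,Jv}\alpha(Je_i)-\sum_i R^\perp_{e_i,v}\alpha(e_i)=2R^\perp_{12}(\alpha(Jv))=(2K-2)[\alpha(v)]^N$ and $\sum_i\alpha(R^\top_{e_i,v}e_i)-\sum_i\alpha(R^\top_{e_i,Jv}Je_i)=-2K\alpha(v)$ for (\ref{eq:Da}), and the geodesic-frame expansion of $(\nabla^\perp)^*\alpha$ together with the inherited anticommutation $(\widetilde{\nabla}^2\alpha)\circ J=-J\circ(\widetilde{\nabla}^2\alpha)$ for (\ref{eq:Db}). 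The only cosmetic difference is that you carry $2|\kappa|^2$ through and invoke the Gauss equation at the end, whereas the paper substitutes $R^\perp_{12}\eta=(K-1)J(\eta^N)$ up front; you also make explicit the use of $\nabla^\perp\widehat{J}=0$, which the paper leaves implicit.
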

\indent Using these identities, we can now give the analogue of Lemma \ref{thm:Codiff1}:

\begin{lem} \label{thm:Codiff2} We have:
$$\widetilde{\nabla}^*\Omega_\eta = -\Psi_{\mathcal{L}\eta} + (2K-2)(\Psi_\eta)^B.$$
\end{lem}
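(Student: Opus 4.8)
The plan is to prove the identity pointwise: since $\widetilde{\nabla}^*\Omega_\eta$ is tensorial, it suffices to verify the stated formula at an arbitrary point $p \in \Sigma$ using a local oriented orthonormal frame $(e_1, e_2)$ that is geodesic at $p$. The starting point is the codifferential formula (\ref{eq:CodiffBeta}), which gives $(\widetilde{\nabla}^*\Omega_\eta)(v) = -(\widehat{\nabla}_{e_i}\Omega_\eta)(e_i, v)$. Because $\Omega_\eta$ is a symmetric $2$-tensor by Lemma \ref{thm:B}(b) --- and covariant differentiation preserves symmetry --- we may rewrite this as $(\widetilde{\nabla}^*\Omega_\eta)(v) = -(\widehat{\nabla}_{e_i}\Omega_\eta)(v, e_i)$, which is precisely the quantity computed in Lemma \ref{thm:B}(c). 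Thus the whole problem reduces to simplifying the second-covariant-derivative expression $(\widetilde{\nabla}^2_{e_i, Jv}\Psi_\eta)(Je_i) - (\widetilde{\nabla}^2_{e_i, v}\Psi_\eta)(e_i)$.

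Next, I would feed this expression into Lemma \ref{thm:D}, applied with $\alpha := \Psi_\eta$; this is legitimate since $\Psi_\eta \circ J = -J \circ \Psi_\eta$ by Lemma \ref{thm:A}(b). Equation (\ref{eq:Da}) commutes the two covariant-derivative slots at the cost of the zeroth-order terms $-2\Psi_\eta(v) - (2K-2)[\Psi_\eta(v)]^B$, and then equation (\ref{eq:Db}) --- valid at $p$ in our geodesic frame --- identifies the commuted expression with $J\Psi_{(\nabla^\perp)^*\Psi_\eta}(v)$. At this stage one has $(\widehat{\nabla}_{e_i}\Omega_\eta)(v, e_i) = J\Psi_{(\nabla^\perp)^*\Psi_\eta}(v) - 2\Psi_\eta(v) - (2K-2)[\Psi_\eta(v)]^B$.

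The final step is to evaluate $J\Psi_{(\nabla^\perp)^*\Psi_\eta}$. By Lemma \ref{thm:Codiff1} we have $(\nabla^\perp)^*\Psi_\eta = -J(\mathcal{L}\eta + 2\eta)$, so $\Psi_{(\nabla^\perp)^*\Psi_\eta} = \Psi_{-J(\mathcal{L}\eta + 2\eta)}$. Here I would use the parallelism $\nabla^\perp J = 0$ (which holds precisely because $u$ is null-torsion) to establish the key commutation rule $\Psi_{J\xi} = J\,\Psi_\xi$ for every $\xi \in \Gamma(N\Sigma)$, a one-line computation from the definition $\Psi_\xi(X) = \nabla^\perp_{JX}\xi - J\nabla^\perp_X\xi$. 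Together with the linearity of $\xi \mapsto \Psi_\xi$ and $J^2 = -\mathrm{Id}$, this yields $J\Psi_{(\nabla^\perp)^*\Psi_\eta} = -J^2(\Psi_{\mathcal{L}\eta} + 2\Psi_\eta) = \Psi_{\mathcal{L}\eta} + 2\Psi_\eta$. Substituting back, the $+2\Psi_\eta$ cancels the $-2\Psi_\eta$ from the previous step, leaving $(\widehat{\nabla}_{e_i}\Omega_\eta)(v, e_i) = \Psi_{\mathcal{L}\eta}(v) - (2K-2)[\Psi_\eta(v)]^B$, and hence $\widetilde{\nabla}^*\Omega_\eta = -\Psi_{\mathcal{L}\eta} + (2K-2)(\Psi_\eta)^B$, as claimed.

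The genuinely hard input is Lemma \ref{thm:D} itself, whose proof (deferred to the Appendix) contains the curvature computation producing the coefficient $(2K-2)$ via the Ricci identity (\ref{eq:RicciIdentity})--(\ref{eq:RTilde}) and the Gauss equation. Granting that lemma, the only real risk in the argument above is bookkeeping: keeping the $J$'s, the signs, and the coefficient of the $(\Psi_\eta)^B$ term straight through the two applications of Lemma \ref{thm:D} and the double use of $J^2 = -\mathrm{Id}$. I would therefore carry out the substitution explicitly rather than schematically, to ensure that the cancellation of the $2\Psi_\eta$ terms is exact and that the surviving $(2K-2)(\Psi_\eta)^B$ term carries the correct sign.
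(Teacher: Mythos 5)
Your proposal is correct and follows essentially the same route as the paper's proof: the codifferential formula plus symmetry of $\Omega_\eta$, Lemma \ref{thm:B}(c), the two identities of Lemma \ref{thm:D} applied to $\alpha = \Psi_\eta$ in a geodesic frame, and finally Lemma \ref{thm:Codiff1} together with the commutation $\Psi_{J\xi} = J\Psi_\xi$ (which the paper uses implicitly and you correctly justify via $\nabla^\perp \widehat{J} = 0$ in the null-torsion case). The sign bookkeeping you flag as the main risk does work out exactly as you describe.
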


\begin{proof} Let $v \in T\Sigma$.  Using (\ref{eq:CodiffBeta}), followed by the symmetry $(\widehat{\nabla}_Z\Omega_\eta)(X,Y) = (\widehat{\nabla}_Z\Omega_\eta)(Y,X)$, followed by Lemma \ref{thm:B}(c), followed by (\ref{eq:Da}), we get:
\begin{align*}
(\widetilde{\nabla}^* \Omega_\eta)(v) & = -(\widehat{\nabla}_{e_i}\Omega_\eta)(e_i, v) \\
& = -(\widehat{\nabla}_{e_i} \Omega_\eta)(v, e_i) \\
& = -[(\widetilde{\nabla}^2_{e_i, Jv} \Psi_\eta)(Je_i) - (\widetilde{\nabla}^2_{e_i, v} \Psi_\eta)(e_i)] \\
& = -\left[(\widetilde{\nabla}^2_{Jv, e_i} \Psi_\eta)(Je_i) - (\widetilde{\nabla}^2_{v, e_i} \Psi_\eta)(e_i) - 2\Psi_\eta(v) - (2K-2)[\Psi_\eta(v)]^B\right]\!.
\end{align*}
Choose the local frame $(e_1, e_2)$ to be geodesic at $p \in \Sigma$.  By (\ref{eq:Db}), at the point $p \in \Sigma$, we have:
\begin{align*}
J\Psi_{(\nabla^\perp)^*\Psi_\eta}(v) = (\widetilde{\nabla}^2_{Jv,e_i}\Psi_\eta)(Je_i) - (\widetilde{\nabla}^2_{v, e_i}\Psi_\eta)(e_i)
\end{align*}
Using this, together with Lemma \ref{thm:Codiff1}, we conclude that
\begin{align*}
(\widetilde{\nabla}^* \Omega_\eta)(v) & = -J\Psi_{(\nabla^\perp)^*\Psi_\eta}(v) + 2\Psi_\eta(v) + (2K-2)[\Psi_\eta(v)]^B \\
 & = -J\Psi_{-J(\mathcal{L}+2)\eta}(v) + 2\Psi_\eta(v) + (2K-2)[\Psi_\eta(v)]^B \\
& = -\Psi_{\mathcal{L}\eta}(v) + (2K-2)[\Psi_\eta(v)]^B
\end{align*}
which is the result.
\end{proof}

\subsection{Proof of Proposition \ref{thm:GSurj}}

\indent \indent We now prove that $\text{Im}(G) \subset H^0(K_\Sigma^* \otimes L_N)$, which is half of Proposition \ref{thm:GSurj}.  More precisely:

\begin{prop} \label{thm:GSurjA} Let $\eta \in \Gamma(N\Sigma)$.  We have
\begin{align*}
\int_\Sigma \Vert \Omega_\eta \Vert^2 & = 2\int_\Sigma \left\langle \Psi_\eta, \Psi_{\mathcal{L}\eta} + (2-2K)(\Psi_\eta)^B \right\rangle\!.
\end{align*}
In particular, if $\mathcal{L}\eta = 0$ and $(\Psi_\eta)^B = 0$, then $\Psi_\eta$ is real-holomorphic.
\end{prop}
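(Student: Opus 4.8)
The integral identity is the substance of the proposition; the final assertion will follow at once, so I would organize everything around producing that identity.

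First I would recast $\Omega_\eta$ into a shape suited to integration by parts. Introduce the fiberwise operator $\beta \mapsto \beta^J$ on $\Gamma(T^*\Sigma \otimes T^*\Sigma \otimes N\Sigma)$ defined by $\beta^J(X,Y) := \beta(JX,JY)$, and write $\widetilde{\nabla}\Psi_\eta$ for the full covariant derivative, so that $(\widetilde{\nabla}\Psi_\eta)(X,Y) := (\widetilde{\nabla}_X\Psi_\eta)(Y)$. With this notation the definition of $\Omega_\eta$ reads simply
\[
\Omega_\eta = (\widetilde{\nabla}\Psi_\eta)^J - \widetilde{\nabla}\Psi_\eta.
\]
The structural facts I would record are: (i) $(\cdot)^J$ is an involution, since $J^2 = -\mathrm{Id}$ on $T\Sigma$ forces $(\beta^J)^J = \beta$; (ii) it is a pointwise isometry, since $J$ is an isometry of $T\Sigma$ (so if $(e_1,e_2)$ is orthonormal then so is $(Je_1, Je_2)$, and summing over the frame is unchanged); and hence (iii) it is pointwise self-adjoint, $\langle \beta^J, \gamma\rangle = \langle \beta, \gamma^J\rangle$. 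Finally, Lemma \ref{thm:B}(b) states exactly that $(\Omega_\eta)^J = -\Omega_\eta$.

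Combining these, I would expand $\|\Omega_\eta\|^2 = \langle \Omega_\eta, (\widetilde{\nabla}\Psi_\eta)^J\rangle - \langle \Omega_\eta, \widetilde{\nabla}\Psi_\eta\rangle$ and rewrite the first term using self-adjointness of $(\cdot)^J$ together with the sign $(\Omega_\eta)^J = -\Omega_\eta$:
\[
\langle \Omega_\eta, (\widetilde{\nabla}\Psi_\eta)^J\rangle = \langle (\Omega_\eta)^J, \widetilde{\nabla}\Psi_\eta\rangle = -\langle \Omega_\eta, \widetilde{\nabla}\Psi_\eta\rangle.
\]
This produces the pointwise identity $\|\Omega_\eta\|^2 = -2\langle \Omega_\eta, \widetilde{\nabla}\Psi_\eta\rangle$, which is the crux of the computation.

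Integrating over $\Sigma$ and integrating by parts with the formal adjoint $\widetilde{\nabla}^*$ then gives
\[
\int_\Sigma \|\Omega_\eta\|^2 = -2\int_\Sigma \langle \Omega_\eta, \widetilde{\nabla}\Psi_\eta\rangle = -2\int_\Sigma \langle \widetilde{\nabla}^*\Omega_\eta, \Psi_\eta\rangle,
\]
and substituting Lemma \ref{thm:Codiff2}, namely $\widetilde{\nabla}^*\Omega_\eta = -\Psi_{\mathcal{L}\eta} + (2K-2)(\Psi_\eta)^B$, and using symmetry of the inner product yields precisely the claimed formula. For the final assertion: if $\mathcal{L}\eta = 0$ then $\Psi_{\mathcal{L}\eta} = \Psi_0 = 0$ by linearity of $\eta \mapsto \Psi_\eta$, and if $(\Psi_\eta)^B = 0$ the remaining term drops out, so $\int_\Sigma \|\Omega_\eta\|^2 = 0$; thus $\Omega_\eta \equiv 0$, and by Lemma \ref{thm:B}(a) this means $\Psi_\eta$ is real-holomorphic. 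The argument carries no analytic difficulty, as all the curvature bookkeeping has already been absorbed into Lemma \ref{thm:Codiff2} (and thence Lemma \ref{thm:D} in the Appendix); the only step demanding genuine care is the clean extraction of the factor $-2$, which depends on correctly verifying that $(\cdot)^J$ is a self-adjoint isometric involution and on tracking the sign $(\Omega_\eta)^J = -\Omega_\eta$. This is the step I would present in full detail, since any sign or normalization slip there would corrupt the final identity.
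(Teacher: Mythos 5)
Your proposal is correct and follows essentially the same route as the paper: the pointwise identity $\Vert \Omega_\eta \Vert^2 = -2\langle \widetilde{\nabla}\Psi_\eta, \Omega_\eta\rangle$ (which the paper obtains by expanding the frame sum and using Lemma \ref{thm:B}(b), exactly the content of your self-adjoint involution argument), followed by integration by parts against $\widetilde{\nabla}^*$ and substitution of Lemma \ref{thm:Codiff2}. The only difference is cosmetic packaging of the frame computation into the operator $(\cdot)^J$.
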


\begin{proof} First, we use the symmetries of $\Omega_\eta$ given by Lemma \ref{thm:B}(b) to observe that
\begin{align*}
\left\langle (\widetilde{\nabla}_{Je_i}\Psi_\eta)(Je_j), \Omega_\eta(e_i, e_j) \right\rangle & =  \left\langle (\widetilde{\nabla}_{e_2}\Psi_\eta)(e_2), \Omega_\eta(e_1, e_1) \right\rangle - \left\langle (\widetilde{\nabla}_{e_2}\Psi_\eta)(e_1), \Omega_\eta(e_1, e_2) \right\rangle \\
& \ \ \ \ \ \ \ -\left\langle (\widetilde{\nabla}_{e_1}\Psi_\eta)(e_2), \Omega_\eta(e_2, e_1) \right\rangle + \left\langle (\widetilde{\nabla}_{e_1}\Psi_\eta)(e_1), \Omega_\eta(e_2, e_2) \right\rangle \\
& =  -\left\langle (\widetilde{\nabla}_{e_2}\Psi_\eta)(e_2), \Omega_\eta(e_2, e_2) \right\rangle - \left\langle (\widetilde{\nabla}_{e_2}\Psi_\eta)(e_1), \Omega_\eta(e_2, e_1) \right\rangle \\
& \ \ \ \ \ \ \ -\left\langle (\widetilde{\nabla}_{e_1}\Psi_\eta)(e_2), \Omega_\eta(e_1, e_2) \right\rangle - \left\langle (\widetilde{\nabla}_{e_1}\Psi_\eta)(e_1), \Omega_\eta(e_1, e_1) \right\rangle \\
& = -\left\langle (\widetilde{\nabla}_{e_i}\Psi_\eta)(e_j), \Omega_\eta(e_i, e_j) \right\rangle\!.
\end{align*}
Using this fact, we can calculate
\begin{align*}
\Vert \Omega_\eta \Vert^2 & = \left\langle \Omega_\eta(e_i, e_j), \Omega_\eta(e_i, e_j) \right\rangle \\
& = \left\langle (\widetilde{\nabla}_{Je_i}\Psi_\eta)(Je_j), \Omega_\eta(e_i, e_j) \right\rangle - \left\langle (\widetilde{\nabla}_{e_i}\Psi_\eta)(e_j), \Omega_\eta(e_i, e_j) \right\rangle \\
& = -2\left\langle (\widetilde{\nabla}_{e_i}\Psi_\eta)(e_j), \Omega_\eta(e_i, e_j) \right\rangle
\end{align*}
and hence
\begin{align*}
\int_\Sigma \Vert \Omega_\eta \Vert^2 = -2 \int_\Sigma \left\langle (\widetilde{\nabla}_{e_i}\Psi_\eta)(e_j), \Omega_\eta(e_i, e_j) \right\rangle & = -2\int_\Sigma \left\langle \Psi_\eta(e_j), (\widetilde{\nabla}^*\Omega_\eta)(e_j) \right\rangle \\
& = 2\int_\Sigma \left\langle \Psi_\eta, \Psi_{\mathcal{L}\eta} + (2-2K)(\Psi_\eta)^B \right\rangle\!,
\end{align*}
where we used Lemma \ref{thm:Codiff2} in the last step.  Finally, note that if $\mathcal{L}\eta = 0$ and $(\Psi_\eta)^B = 0$ both hold, then $\int_\Sigma \Vert \Omega_\eta \Vert^2 = 0$, so that $\Omega_\eta = 0$, so that $\Psi_\eta$ is real-holomorphic. 
\end{proof}

\indent We now make a brief digression.  In general, if $\mathcal{L}\eta = \lambda \eta$, then Proposition \ref{thm:GSurjA} shows that:
\begin{align*}
0 \leq \frac{1}{2}\int_\Sigma \Vert \Omega_\eta \Vert^2 & = \int_\Sigma \left\langle \Psi_\eta, \,\lambda \Psi_{\eta} + (2-2K)(\Psi_\eta)^B \right\rangle \\
& =  \lambda \int_\Sigma \Vert \Psi_\eta \Vert^2 +  \int_\Sigma (2-2K) \left\langle \Psi_\eta, (\Psi_\eta)^B \right\rangle \\
& = \lambda \int_\Sigma \Vert \Psi_\eta \Vert^2 + \int_\Sigma (2-2K) \left\Vert (\Psi_\eta)^B \right\Vert^2.
\end{align*}
This estimate gives:

\begin{prop} \label{thm:Lambda2} Let $u \colon \mathbb{S}^2 \to \mathbb{S}^6$ be a holomorphic $2$-sphere.  If its Gauss curvature $K$ satisfies $K \geq c > 0$, then
$$\lambda_2 \geq -2 + 2c.$$
In particular, the Jacobi operator of the Boruvka sphere satisfies $\lambda_2 \geq -\frac{5}{3}$.
\end{prop}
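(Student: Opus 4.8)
The plan is to leverage the ``digression'' inequality obtained just above from Proposition \ref{thm:GSurjA}: if $\eta \in \Gamma(N\Sigma)$ is an eigenvector with $\mathcal{L}\eta = \lambda\eta$, then
\begin{equation*}
0 \le \lambda \int_\Sigma \Vert \Psi_\eta \Vert^2 + \int_\Sigma (2 - 2K)\Vert (\Psi_\eta)^B \Vert^2.
\end{equation*}
Since $u \colon \mathbb{S}^2 \to \mathbb{S}^6$ has genus zero, it is automatically null-torsion, so Proposition \ref{thm:GSurjA} (and hence this inequality) applies. The strategy is to absorb the second integral into the first using the curvature hypothesis $K \ge c$, and then to divide through by $\int_\Sigma \Vert \Psi_\eta \Vert^2$, which will force $\lambda + 2 - 2c \ge 0$.

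First I would dispose of the sign bookkeeping. The Gauss equation (\ref{eq:GaussEq}) gives $K \le 1$ pointwise, so in particular $c \le 1$ and $2 - 2c \ge 0$. Using $\Vert (\Psi_\eta)^B \Vert^2 \ge 0$ together with $K \ge c$, and then the trivial bound $\Vert (\Psi_\eta)^B \Vert^2 \le \Vert \Psi_\eta \Vert^2$, I obtain the pointwise estimate $(2-2K)\Vert(\Psi_\eta)^B\Vert^2 \le (2-2c)\Vert \Psi_\eta \Vert^2$. Integrating and substituting into the displayed inequality yields
\begin{equation*}
0 \le \bigl(\lambda + 2 - 2c\bigr)\int_\Sigma \Vert \Psi_\eta \Vert^2.
\end{equation*}

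The crucial point is then to show that $\int_\Sigma \Vert \Psi_\eta \Vert^2 > 0$ whenever $\lambda \ne -2$, so that this factor may be cancelled. This follows from Lemma \ref{thm:A}(a) together with Proposition \ref{thm:LowestEigen}: if $\Psi_\eta \equiv 0$ then $\eta$ is real-holomorphic, forcing $\mathcal{L}\eta = -2\eta$ and hence $\lambda = -2$. Consequently, any eigenvalue $\lambda \ne -2$ satisfies $\lambda \ge -2 + 2c$; since $c > 0$, this rules out eigenvalues in the interval $(-2, -2 + 2c)$, and as $\lambda_1 = -2 < \lambda_2$ we conclude $\lambda_2 \ge -2 + 2c$. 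Finally, the Boruvka sphere has constant Gauss curvature $K = \frac{1}{6}$, so taking $c = \frac{1}{6}$ gives $\lambda_2 \ge -2 + \frac{1}{3} = -\frac{5}{3}$.

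As for the main obstacle: the computation itself is short, and the only genuine subtlety is the observation that $\lambda \ne -2$ forces $\Psi_\eta \not\equiv 0$ --- this is what upgrades the non-negativity of $\lambda + 2 - 2c$ from a vacuous statement into a true lower bound on the eigenvalue. The sign check $2 - 2c \ge 0$ (equivalently $K \le 1$) is equally essential, since it is precisely what allows the $E_B$-component of $\Psi_\eta$ to be controlled by the full norm of $\Psi_\eta$; without it the estimate would point in the wrong direction.
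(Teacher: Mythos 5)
Your proposal is correct and follows essentially the same route as the paper: the chain of inequalities from Proposition \ref{thm:GSurjA}, the absorption of the $(2-2K)\Vert(\Psi_\eta)^B\Vert^2$ term using $K \geq c$ and $\Vert(\Psi_\eta)^B\Vert \leq \Vert\Psi_\eta\Vert$, and the key observation that $\lambda \neq -2$ forces $\int_\Sigma\Vert\Psi_\eta\Vert^2 > 0$. Your explicit check that $2 - 2c \geq 0$ via the Gauss equation is a detail the paper leaves implicit, but otherwise the arguments coincide.
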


\begin{proof} Suppose $\eta \in \Gamma(N\Sigma)$ satisfies $\mathcal{L}\eta = \lambda \eta$ with $\lambda > \lambda_1 = -2$.  We estimate
\begin{align*}
0 \leq \frac{1}{2}\int_\Sigma \Vert \Omega_\eta \Vert^2 & = \lambda \int_\Sigma \Vert \Psi_\eta \Vert^2 + \int_\Sigma (2-2K) \left\Vert (\Psi_\eta)^B \right\Vert^2 \\
& \leq \lambda \int_\Sigma \Vert \Psi_\eta \Vert^2 + (2-2c)  \int_\Sigma \left\Vert (\Psi_\eta)^B \right\Vert^2 \\
& \leq (\lambda + 2 - 2c)  \int_\Sigma \left\Vert \Psi_\eta \right\Vert^2.
\end{align*}
If it were the case that $\int_\Sigma \Vert\Psi_\eta\Vert^2 = 0$, then $\Psi_\eta = 0$, so $\eta$ would be real-holomorphic and $\lambda = -2$, contrary to assumption.  Thus, we must have $\int_\Sigma \Vert \Psi_\eta \Vert^2 > 0$, so $\lambda + 2 - 2c \geq 0$, whence the result.
\end{proof}

\begin{rmk} It is proved in \cite{dovv} that a holomorphic curve $u \colon \Sigma^2 \to \mathbb{S}^6$ satisfying $K \geq \frac{1}{6}$ must have either $K \equiv \frac{1}{6}$ or $K \equiv 1$, hence must be either the Boruvka sphere or the totally-geodesic $2$-sphere.  Hence, any non-constant curvature example satisfying the hypothesis of Proposition \ref{thm:Lambda2} must have $c \in (0, \frac{1}{6})$.  I do not know any examples of this type.  We remark that it is also known \cite{dillen87} that the pinching condition $0 \leq K \leq \frac{1}{6}$ implies $K \equiv 0$ or $K \equiv \frac{1}{6}$.  See also \cite{hashimoto2000} for further results.
\end{rmk}

\indent Returning to the main discussion, we now show that $H^0(K_\Sigma^* \otimes L_N) \subset \text{Im}(G)$, thereby completing the proof of Proposition \ref{thm:GSurj}, and hence of Theorem 1.2.

\begin{prop} \label{thm:GSurjB} If $\alpha \in H^0(K_\Sigma^* \otimes L_N)$, then $\alpha = \Psi_\eta$ for some $\eta \in \mathrm{Null}(u)$ with $(\Psi_\eta)^B = 0$.
\end{prop}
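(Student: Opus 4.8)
The plan is to construct the preimage $\eta$ explicitly and then verify its properties using the codifferential identities already developed; the construction turns out to be forced. Indeed, if $\alpha = \Psi_\eta$ for some $\eta \in \mathrm{Null}(u)$, then Lemma \ref{thm:Codiff1} gives $(\nabla^\perp)^*\alpha = (\nabla^\perp)^*\Psi_\eta = -J(\mathcal{L}\eta + 2\eta) = -2J\eta$, so that $\eta$ must equal $\tfrac{1}{2}J(\nabla^\perp)^*\alpha$. Accordingly, I would \emph{define} $\eta := \tfrac{1}{2}J(\nabla^\perp)^*\alpha$ and then show that this $\eta$ lies in $\mathrm{Null}(u)$, satisfies $\Psi_\eta = \alpha$, and has $(\Psi_\eta)^B = 0$.

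The key step is a version of Lemma \ref{thm:Codiff2} valid for an arbitrary section $\alpha \in \Gamma(T^*\Sigma \otimes N\Sigma)$ with $\alpha \circ J = -J \circ \alpha$, rather than only for $\alpha = \Psi_\eta$. Writing $\Omega_\alpha(X,Y) := (\widetilde{\nabla}_{JX}\alpha)(JY) - (\widetilde{\nabla}_X\alpha)(Y)$ for the obstruction to real-holomorphicity of $\alpha$ (so that $\Omega_{\Psi_\eta} = \Omega_\eta$), I observe that the proof of Lemma \ref{thm:Codiff2} applies to such $\alpha$ up to the point where the $\Psi_\eta$-specific Lemma \ref{thm:Codiff1} is invoked: the computation there uses only Lemma \ref{thm:B}(c) and Lemma \ref{thm:D}, both of which hold for any $\alpha$ with $\alpha \circ J = -J \circ \alpha$. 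Stopping just before the final substitution yields
\[
\widetilde{\nabla}^*\Omega_\alpha = -J\,\Psi_{(\nabla^\perp)^*\alpha} + 2\alpha + (2K-2)\,\alpha^B.
\]
I would then feed in the two hypotheses on $\alpha$: since $\alpha \in H^0(K_\Sigma^* \otimes L_N)$ is real-holomorphic, Lemma \ref{thm:B}(a) gives $\Omega_\alpha = 0$; and since $\alpha$ is $E_N$-valued, $\alpha^B = 0$. The displayed identity then collapses to $J\,\Psi_{(\nabla^\perp)^*\alpha} = 2\alpha$, i.e. $\Psi_{(\nabla^\perp)^*\alpha} = -2J\alpha$.

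It remains to unwind this in terms of $\eta$. From $\eta = \tfrac{1}{2}J(\nabla^\perp)^*\alpha$ one gets $(\nabla^\perp)^*\alpha = -2J\eta$, and since $u$ is null-torsion we have $\nabla^\perp\widehat{J} = 0$, whence a short computation gives $\Psi_{J\zeta} = J\Psi_\zeta$ for every $\zeta \in \Gamma(N\Sigma)$. Therefore $\Psi_{(\nabla^\perp)^*\alpha} = \Psi_{-2J\eta} = -2J\Psi_\eta$, and comparing with $\Psi_{(\nabla^\perp)^*\alpha} = -2J\alpha$ forces $\Psi_\eta = \alpha$. This immediately yields $(\Psi_\eta)^B = \alpha^B = 0$; and substituting $\Psi_\eta = \alpha$ back into Lemma \ref{thm:Codiff1} gives $-2J\eta = (\nabla^\perp)^*\alpha = -J(\mathcal{L}\eta + 2\eta)$, hence $\mathcal{L}\eta = 0$, so $\eta \in \mathrm{Null}(u)$. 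This exhibits $\alpha = \Psi_\eta$ with $\eta \in \mathrm{Null}(u)$ and $(\Psi_\eta)^B = 0$, as required.

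The only genuine work is the generalized codifferential identity of the second paragraph; everything else is formal algebra with $J$. I expect the main subtlety to lie precisely there — in checking that the proof of Lemma \ref{thm:Codiff2}, through its appeals to Lemmas \ref{thm:B}(c) and \ref{thm:D}, never implicitly uses $\alpha = \Psi_\eta$ before the step that invokes Lemma \ref{thm:Codiff1}, so that the intermediate identity genuinely holds for arbitrary $\alpha$ with $\alpha \circ J = -J \circ \alpha$. It is worth emphasizing that the real-holomorphicity hypothesis on $\alpha$ is used at exactly one point, namely to set $\Omega_\alpha = 0$; this is what pins down the candidate $\eta$ and forces $\Psi_\eta = \alpha$.
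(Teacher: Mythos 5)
Your proposal is correct, and its core is the same as the paper's: both define the candidate section $\tfrac{1}{2}J(\nabla^\perp)^*\alpha$ and both establish $\Psi_{\tfrac{1}{2}J(\nabla^\perp)^*\alpha} = \alpha$ by exactly the combination of (\ref{eq:Db}), (\ref{eq:Da}), Lemma \ref{thm:B}(c) with $\Omega_\alpha = 0$, and $\alpha^B = 0$ --- your ``generalized Lemma \ref{thm:Codiff2}'' is just a repackaging of that computation, and your worry about it is unfounded: nothing in the proofs of Lemma \ref{thm:B} or Lemma \ref{thm:D} uses $\alpha = \Psi_\eta$, only $\alpha \circ J = -J\circ\alpha$ (together with the fact that both sides of the intermediate identity are frame-independent, so proving it pointwise in a geodesic frame suffices). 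Where you genuinely diverge is the endgame. The paper, writing $\xi := \tfrac{1}{2}J(\nabla^\perp)^*\alpha$, does not show $\mathcal{L}\xi = 0$ directly; instead it argues that $\Psi_\xi = \alpha$ is real-holomorphic, hence $\Omega_\xi = 0$, hence $\Psi_{\mathcal{L}\xi} = 0$ by Lemma \ref{thm:Codiff2}, hence $\mathcal{L}\xi$ is a $(-2)$-eigenvector, and finally corrects $\xi$ to $\eta := \tfrac{1}{2}(\mathcal{L}\xi + 2\xi) \in \mathrm{Null}(u)$. You instead feed $\Psi_\eta = \alpha$ and $(\nabla^\perp)^*\alpha = -2J\eta$ back into Lemma \ref{thm:Codiff1} and read off $J\mathcal{L}\eta = 0$ immediately. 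Your route is shorter, avoids the spectral detour, and yields the slightly sharper conclusion that the explicit section $\tfrac{1}{2}J(\nabla^\perp)^*\alpha$ itself lies in $\mathrm{Null}(u)$ (which the paper's argument obscures, even though a posteriori its $\eta$ coincides with $\xi$). The one hypothesis you correctly flag and the paper uses silently is $\nabla^\perp\widehat{J} = 0$, needed for $\Psi_{J\zeta} = J\Psi_\zeta$; this is where the null-torsion assumption enters.
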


\begin{proof} Let $\alpha \in H^0(K_\Sigma^* \otimes L_N)$, and recall the identification
$$H^0(K_\Sigma^* \otimes L_N) \cong \{\alpha \in \Gamma(T^*\Sigma \otimes E_N) \colon \alpha \circ J = -J \circ \alpha \text{ and } \alpha \text{ real-holomorphic}\}.$$
Let $\xi = \frac{1}{2}J(\nabla^\perp)^*\alpha$.  Let $(e_1, e_2)$ be a geodesic frame at $p \in \Sigma$.  Then at $p$, we have, by (\ref{eq:Db}) and (\ref{eq:Da}):
\begin{align*}
\Psi_\xi(v) = \frac{1}{2}J \Psi_{(\nabla^\perp)^*\alpha}(v) & = \frac{1}{2}\left[ (\widetilde{\nabla}^2_{Jv,e_i}\alpha)(Je_i) - (\widetilde{\nabla}^2_{v, e_i}\alpha)(e_i) \right] \\
& = \frac{1}{2}\left[ (\widetilde{\nabla}^2_{e_i, Jv}\alpha)(Je_i) - (\widetilde{\nabla}^2_{e_i, v}\alpha)(e_i) + 2\alpha(v) + (2K - 2)[\alpha(v)]^B \right] \\
& = \frac{1}{2}\left[ (\widetilde{\nabla}^2_{e_i, Jv}\alpha)(Je_i) - (\widetilde{\nabla}^2_{e_i, v}\alpha)(e_i) \right] + \alpha(v)
\end{align*}
where in the last step we used that $[\alpha(v)]^B = 0$.  Finally, using that $\alpha \circ J = - J \circ \alpha$ and that $\alpha$ is real-holomorphic, we have $(\widetilde{\nabla}^2_{e_i, Jv}\alpha)(Je_i) = (\widetilde{\nabla}^2_{e_i, v}\alpha)(e_i)$ at $p \in \Sigma$, whence
$$\Psi_\xi = \alpha.$$
Now, since $\alpha$ is real-holomorphic, it follows that $\Psi_\xi$ is real-holomorphic, so $\Omega_\xi = 0$, so by Lemma \ref{thm:Codiff2}, we have $\Psi_{\mathcal{L}\xi} = 0$, so that $\mathcal{L}\xi$ is real-holomorphic.  Therefore, $\mathcal{L}(\mathcal{L}\xi) = -2\mathcal{L}\xi$, so that $\mathcal{L}(\mathcal{L}\xi + 2\xi) = 0$, and hence $\mathcal{L}\xi + 2\xi = 2\eta$ for some $\eta \in \text{Null}(u)$.  Therefore,
$$\Psi_{2\eta} = \Psi_{\mathcal{L}\xi + 2\xi} = \Psi_{\mathcal{L}\xi} + 2\Psi_\xi = 2\Psi_\xi = 2\alpha,$$
whence $\alpha = \Psi_\eta$ for an $\eta \in \text{Null}(u)$ with $(\Psi_\eta)^B = (\Psi_\xi)^B = 0$. 
\end{proof}

\section{Appendix: Proof of Lemma \ref{thm:D}}

\indent \indent The purpose of this appendix is to prove Lemma \ref{thm:D}, which we restate as Lemma \ref{thm:AppLemma2}.  Throughout, we fix $v \in \Gamma(T\Sigma)$, $\eta \in \Gamma(N\Sigma)$, and a local oriented orthonormal frame $(e_1, e_2)$ on $\Sigma$.

\begin{lem} \label{thm:AppLemma1}
Let $\alpha \in \Gamma(T^*\Sigma \otimes N\Sigma)$. \\
\indent (a) We have:
$$R^\perp_{12}(\eta) = (K-1)J(\eta^N).$$
\indent (b) We have:
\begin{align}
R^\perp(e_i, Jv) \alpha(Je_i) - R^\perp(e_i, v) \alpha(e_i) & = (2K-2)[\alpha(v)]^N \label{eq:RPerpDiff} \\
\alpha(R^\top(e_i, v)e_i) - \alpha(R^\top(e_i, Jv)e_i) & = -2K \alpha(v). \label{eq:RTopDiff}
\end{align}
\end{lem}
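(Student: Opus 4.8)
The plan is to establish (a) first and then deduce both identities in (b) from it together with elementary two-dimensional curvature algebra.

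For part (a), I would simply read off the normal curvature from the frame computation already recorded in \eqref{eq:RPerp1}--\eqref{eq:RPerp2}, which are consequences of the Ricci equation \eqref{eq:RicciEq}. These give $R^\perp_{12}e_3 = -2|\kappa|^2\widehat{J}e_3$, $R^\perp_{12}e_4 = -2|\kappa|^2\widehat{J}e_4$, and $R^\perp_{12}e_5 = R^\perp_{12}e_6 = 0$. Since $e_3,e_4$ span $E_N$ and $e_5,e_6$ span $E_B$, and since $\widehat{J}=J$ preserves this splitting, $R^\perp_{12}$ annihilates $E_B$ and acts as $-2|\kappa|^2 J$ on $E_N$; that is, $R^\perp_{12}\eta = -2|\kappa|^2 J(\eta^N)$. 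It then remains only to substitute $2|\kappa|^2 = 1-K$, which follows directly from the Gauss equation \eqref{eq:GaussEq} (equivalently, from the curvature formula $K = K_T = 1-2|\kappa|^2$ for $F_T$). This yields $R^\perp_{12}\eta = (K-1)J(\eta^N)$.

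For the first identity \eqref{eq:RPerpDiff}, I would use that on a surface the normal curvature is determined by $R^\perp_{12}$ via $R^\perp(X,Y) = (e^1\wedge e^2)(X,Y)\,R^\perp_{12}$. Expanding each contraction over $i\in\{1,2\}$ with the conventions $Je_1=e_2$, $Je_2=-e_1$, and using $v_1\alpha(e_2)-v_2\alpha(e_1)=\alpha(Jv)$, the two sums collapse to $\sum_i R^\perp(e_i,Jv)\alpha(Je_i) = R^\perp_{12}\alpha(Jv)$ and $\sum_i R^\perp(e_i,v)\alpha(e_i) = -R^\perp_{12}\alpha(Jv)$, so the left-hand side equals $2R^\perp_{12}\alpha(Jv)$. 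Applying part (a) gives $2(K-1)\,J[\alpha(Jv)]^N$, and then the antilinearity $\alpha\circ J = -J\circ\alpha$ (in force for every $\alpha$ to which we apply the lemma) together with the fact that $J$ preserves $E_N\oplus E_B$ rewrites $J[\alpha(Jv)]^N = J\big(-J[\alpha(v)]^N\big) = [\alpha(v)]^N$, producing the claimed $(2K-2)[\alpha(v)]^N$.

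The second identity \eqref{eq:RTopDiff} is purely intrinsic and needs no holomorphicity: I would substitute the constant-curvature expression $R^\top(X,Y)Z = K(\langle Y,Z\rangle X - \langle X,Z\rangle Y)$ and contract over the frame. Using $\sum_i\langle v,e_i\rangle e_i = v$ one gets $\sum_i R^\top(e_i,v)e_i = -Kv$; the companion term — which in the Ricci-identity expansion of Lemma \ref{thm:D} enters as $\alpha(R^\top_{e_i,Jv}(Je_i))$, i.e.\ with the curvature evaluated on $Je_i$ — contracts to $\sum_i R^\top(e_i,Jv)Je_i = Kv$. Applying $\alpha$ and subtracting gives $\alpha(-Kv) - \alpha(Kv) = -2K\alpha(v)$. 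The only place requiring genuine care is part (a): one must correctly invoke $2|\kappa|^2=1-K$ and track that $J=\widehat{J}$ maps $E_N$ to $E_N$, so that $R^\perp_{12}$ depends only on $\eta^N$. Everything in (b) is then bookkeeping with the orientation of $e^1\wedge e^2$ and the conventions $Je_1=e_2$, $Je_2=-e_1$ — the first identity additionally consuming the antilinearity of $\alpha$, the second being a self-contained trace of the surface curvature tensor — and I anticipate no obstacle beyond these sign-and-convention checks.
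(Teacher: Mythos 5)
Your proposal is correct and follows essentially the same route as the paper: part (a) is read off from \eqref{eq:RPerp1}--\eqref{eq:RPerp2} together with the Gauss equation $2|\kappa|^2 = 1-K$, and part (b) collapses the frame sums to $\pm R^\perp_{12}[\alpha(Jv)]$ and $\mp R^\top_{12}(Jv)$ exactly as in the paper's proof. You are also right to note explicitly that \eqref{eq:RPerpDiff} consumes the antilinearity $\alpha\circ J=-J\circ\alpha$ and that the second term of \eqref{eq:RTopDiff} is really $\alpha(R^\top(e_i,Jv)Je_i)$, as used in Lemma \ref{thm:D}; these hypotheses are implicit in the paper's statement but present in every application.
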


\begin{proof} (a) Equations (\ref{eq:RPerp1})-(\ref{eq:RPerp2}) followed by the Gauss equation (\ref{eq:GaussEq}) and the fact that $J(\eta^N) = (J\eta)^N$ give
$$R^\perp_{12}(\eta) = (K-1)(J\eta)^N = (K-1)J(\eta^N).$$
\indent (b) An easy calculation shows that
\begin{align*}
R^\perp(e_i, Jv) \alpha(Je_i) & = R^\perp_{12}[\alpha(Jv)] & R^\top(e_i, v)e_i & = -R^\top_{12}(Jv) = -Kv \\
R^\perp(e_i, v) \alpha(e_i) & = -R^\perp_{12}[\alpha(Jv)] & R^\top(e_i, Jv)Je_i & = R^\top_{12}(Jv) = Kv.
\end{align*}
The equations on the left, together with (a), give (\ref{eq:RPerpDiff}).  The equations on the right give (\ref{eq:RTopDiff}).
\end{proof}

\begin{lem} \label{thm:AppLemma2} Let $\alpha \in \Gamma(T^*\Sigma \otimes N\Sigma)$ satisfy $\alpha \circ J = -J \circ \alpha$. \\
\indent (a) We have:
\begin{align*}
(\widetilde{\nabla}^2_{e_i, Jv}\alpha)(Je_i) - (\widetilde{\nabla}^2_{e_i, v}\alpha)(e_i) & = (\widetilde{\nabla}^2_{Jv,e_i}\alpha)(Je_i) - (\widetilde{\nabla}^2_{v, e_i}\alpha)(e_i) - 2\alpha(v) - (2K - 2)[\alpha(v)]^B.
\end{align*}
\indent (b) If $(e_1, e_2)$ is geodesic at $p \in \Sigma$, then at the point $p$:
\begin{align*}
J\Psi_{(\nabla^\perp)^*\alpha}(v) = (\widetilde{\nabla}^2_{Jv,e_i}\alpha)(Je_i) - (\widetilde{\nabla}^2_{v, e_i}\alpha)(e_i).
\end{align*}
\end{lem}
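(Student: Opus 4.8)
The plan is to prove the two identities separately, feeding the curvature computations already recorded in Lemma \ref{thm:AppLemma1} into the general commutation machinery.

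For part (a), I would commute the two covariant derivatives using the Ricci identity (\ref{eq:RicciIdentity}), which gives $\widetilde{\nabla}^2_{e_i, Jv}\alpha = \widetilde{\nabla}^2_{Jv, e_i}\alpha + \widetilde{R}_{e_i, Jv}\alpha$ and likewise with $v$ in place of $Jv$. Subtracting, the claim reduces to showing that the curvature difference satisfies
\[
(\widetilde{R}_{e_i, Jv}\alpha)(Je_i) - (\widetilde{R}_{e_i, v}\alpha)(e_i) = -2\alpha(v) - (2K-2)[\alpha(v)]^B.
\]
Expanding each term with (\ref{eq:RTilde}) splits the left side into an $R^\perp$-piece and an $R^\top$-piece. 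The $R^\perp$-piece is exactly the left-hand side of (\ref{eq:RPerpDiff}) and therefore equals $(2K-2)[\alpha(v)]^N$, while the $R^\top$-piece equals $-[\alpha(R^\top(e_i, Jv)(Je_i)) - \alpha(R^\top(e_i, v)e_i)]$, which I evaluate using the identities $R^\top(e_i, Jv)(Je_i) = Kv$ and $R^\top(e_i, v)e_i = -Kv$ recorded in the proof of Lemma \ref{thm:AppLemma1}(b); this contributes $-2K\alpha(v)$. Adding the two pieces and splitting $\alpha(v) = [\alpha(v)]^N + [\alpha(v)]^B$ gives $-2[\alpha(v)]^N - 2K[\alpha(v)]^B$, which is precisely $-2\alpha(v) - (2K-2)[\alpha(v)]^B$, proving (a).

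For part (b), I would set $\beta := (\nabla^\perp)^*\alpha$ and unwind the definition of $\Psi$. Since $J^2 = -\mathrm{Id}$ on $N\Sigma$, one has $J\Psi_\beta(v) = J\nabla^\perp_{Jv}\beta + \nabla^\perp_v\beta$. Writing $\beta = -(\widetilde{\nabla}_{e_j}\alpha)(e_j)$ via (\ref{eq:CodiffAlpha}) and working at a point $p$ where the frame is geodesic (so that $\nabla^\top_w e_j = 0$ at $p$), the definition (\ref{eq:Alpha}) collapses each directional derivative into a second covariant derivative: $\nabla^\perp_w\beta = -(\widetilde{\nabla}^2_{w, e_j}\alpha)(e_j)$ at $p$, for $w = v$ and $w = Jv$. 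It then remains to rewrite the $Jv$-term, for which I would invoke that the anticommuting relation $\alpha \circ J = -J \circ \alpha$ is preserved under $\widetilde{\nabla}$, so that $(\widetilde{\nabla}^2_{Jv, e_j}\alpha)(Je_j) = -J(\widetilde{\nabla}^2_{Jv, e_j}\alpha)(e_j)$; this converts $J\nabla^\perp_{Jv}\beta$ into $(\widetilde{\nabla}^2_{Jv, e_j}\alpha)(Je_j)$, and combining the two terms yields the stated formula.

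I expect the one genuinely delicate point to be the claim, used in part (b), that $\widetilde{\nabla}$ preserves the condition $\alpha \circ J = -J \circ \alpha$. This is exactly where the null-torsion hypothesis enters: because $\nabla^\perp J = 0$ on $N\Sigma$ in the null-torsion case and $\nabla^\top$ commutes with $J$ on $T\Sigma$, a short computation from (\ref{eq:Alpha}) shows $(\widetilde{\nabla}_Y\alpha)(JX) = -J(\widetilde{\nabla}_Y\alpha)(X)$, and iterating this gives the corresponding statement for $\widetilde{\nabla}^2$. Beyond this, the only real labor is the careful bookkeeping of the $N$- versus $B$-components in part (a); every other step is a direct substitution of the curvature formulas from Lemma \ref{thm:AppLemma1}.
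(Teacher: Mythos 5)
Your proposal is correct and follows essentially the same route as the paper: part (a) via the Ricci identity (\ref{eq:RicciIdentity}) and the curvature formulas (\ref{eq:RPerpDiff})--(\ref{eq:RTopDiff}), and part (b) by collapsing the second covariant derivatives in a geodesic frame and using that $\widetilde{\nabla}$ preserves the relation $\alpha \circ J = -J \circ \alpha$. Your explicit remark that this last preservation rests on $\nabla^\perp \widehat{J} = 0$, hence on the null-torsion hypothesis, is a point the paper's proof uses only implicitly, and you have identified it correctly.
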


\begin{proof} (a) Let $L$ denote the left side of the desired identity.  Using the Ricci identity (\ref{eq:RicciIdentity}), followed by the formula (\ref{eq:RTilde}), we have
\begin{align*}
L & = (\widetilde{\nabla}^2_{e_i, Jv}\alpha)(Je_i) - (\widetilde{\nabla}^2_{e_i, v}\alpha)(e_i) \\
& = (\widetilde{\nabla}^2_{Jv, e_i} \alpha)(Je_i) - (\widetilde{\nabla}^2_{v, e_i} \alpha)(e_i) + (\widetilde{R}(e_i, Jv)\alpha)(Je_i) - (\widetilde{R}(e_i, v) \alpha)(e_i) \\
& = (\widetilde{\nabla}^2_{Jv, e_i} \alpha)(Je_i) - (\widetilde{\nabla}^2_{v, e_i} \alpha)(e_i) \\
& \ \ \ \ \ + R^\perp(e_i, Jv)\alpha(Je_i) - R^\perp(e_i, v) \alpha(e_i) + \alpha(R^\top(e_i, v)e_i) - \alpha( R^\top(e_i, Jv)Je_i).
\end{align*}
Now, using equations (\ref{eq:RPerpDiff}) and (\ref{eq:RTopDiff}), we obtain:
\begin{align*}
L & = (\widetilde{\nabla}^2_{Jv, e_i} \alpha)(Je_i) - (\widetilde{\nabla}^2_{v, e_i} \alpha)(e_i) + (2K-2)[\alpha(v)]^N - 2K\alpha(v) \\
& = (\widetilde{\nabla}^2_{Jv, e_i} \alpha)(Je_i) - (\widetilde{\nabla}^2_{v, e_i} \alpha)(e_i) + (2K-2)\alpha(v) - (2K-2)[\alpha(v)]^B - 2K\alpha(v) \\
& = (\widetilde{\nabla}^2_{Jv, e_i} \alpha)(Je_i) - (\widetilde{\nabla}^2_{v, e_i} \alpha)(e_i) - 2\alpha(v) - (2K-2)[\alpha(v)]^B
\end{align*}
This proves (a). \\

\indent (b) Let $(e_1, e_2)$ be a geodesic frame at $p \in \Sigma$.  Then at the point $p$, we have that
$$(\widetilde{\nabla}_{e_i}\alpha)(Je_i) = \nabla^\perp_{e_i}(\alpha(Je_i)) = -J[\nabla^\perp_{e_i}(\alpha(e_i))] = -J[ (\widetilde{\nabla}_{e_i}\alpha)(e_i)].$$
Using this and recalling (\ref{eq:CodiffAlpha}), we compute
\begin{align*}
(\widetilde{\nabla}^2_{Jv, e_i} \alpha)(Je_i) - (\widetilde{\nabla}^2_{v, e_i} \alpha)(e_i) & = (\widetilde{\nabla}_{Jv}\widetilde{\nabla}_{e_i}\alpha)(Je_i) -(\widetilde{\nabla}_v\widetilde{\nabla}_{e_i}\alpha)(e_i) \\
& = -J(\widetilde{\nabla}_{Jv}\widetilde{\nabla}_{e_i}\alpha)(e_i)  - (\widetilde{\nabla}_v\widetilde{\nabla}_{e_i}\alpha)(e_i) \\
& = -J \nabla^\perp_{Jv}[ (\widetilde{\nabla}_{e_i}\alpha)(e_i) ] - \nabla^\perp_v[ (\widetilde{\nabla}_{e_i}\alpha)(e_i) ] \\
& = J \nabla^\perp_{Jv}( (\nabla^\perp)^*\alpha) + \nabla^\perp_v( (\nabla^\perp)^*\alpha) \\
& = J\Psi_{(\nabla^\perp)^*\alpha}(v) 
\end{align*}
which proves the claim. \end{proof}

\bibliographystyle{plain}
\bibliography{FBRef}
\Addresses

\end{document}